\def\one{\mbox{1\hspace{-4.25pt}\fontsize{12}{14.4}\selectfont\textrm{1}}}
\newtheorem{thm}{Theorem}[section]
\newtheorem{cor}[thm]{Corollary}
\newtheorem{lem}[thm]{Lemma}
\newtheorem{prop}[thm]{Proposition}
\theoremstyle{definition}
\newtheorem{defn}[thm]{Definition}
\theoremstyle{remark}
\newtheorem{rem}[thm]{Remark}
\theoremstyle{example}
\numberwithin{equation}{section}
\renewcommand{\parallel}{\mathrel{/\mkern-5mu/}}
\newcommand{\notparallel}{%
  \mathrel{\mathpalette\not@parallel\relax}%
}
\newcommand{\not@parallel}[2]{%
  \ooalign{\reflectbox{$\m@th#1\smallsetminus$}\cr\hfil$\m@th#1\parallel$\cr}%
}
 \theoremstyle{plain}
\newcommand{\norm}[1]{\left\Vert#1\right\Vert}
\newcommand{\abs}[1]{\left\vert#1\right\vert}
\newcommand{\R}{{\mathbb R}}
\newcommand{\T}{{\mathbb T}} 
\newcommand{\Z}{{\mathbb Z}}
\newcommand{\calF}{{\mathcal F}}
\newcommand{\calN}{{\mathcal N}}
\newcommand{\calS}{{\mathcal S}}
\newcommand{\calT}{{\mathcal T}}
\begin{document}

\title[]{Global existence of a non-local semilinear parabolic equation with advection and applications to shear flow}%

\author{Yu Feng, Bingyang Hu, Xiaoqian Xu and Yeyu Zhang}%

\address{Yu Feng: Beijing International Center for Mathematical Research, Peking University, No. 5 Yiheyuan Road Haidian District, Beijing, P.R.China 100871}
\email{fengyu@bicmr.pku.edu.cn}

\address{Bingyang Hu: Department of Mathematics, Purdue University, 150 N. University St., W. Lafayette, IN 47907, U.S.A.}%
\email{hu776@purdue.edu}

\address{Xiaoqian Xu: Zu Chongzhi Center for Mathematics and Computational Sciences, Duke Kunshan University, China}
\email{xiaoqian.xu@dukekunshan.edu.cn}

\address{Yeyu Zhang: School of Mathematics, Shanghai University of Finance and Economics, Shanghai, 200433, PR China}%
\email{zhangyeyu@mail.shufe.edu.cn}

\date{\today}

\subjclass[2010]{}%

\keywords{}%
\thanks{}

\maketitle

\begin{abstract}
In this paper, we consider the following non-local semi-linear parabolic equation with advection: for $1 \le p<1+\frac{2}{N}$ and $N \ge 1$, 
$$
\begin{cases}
u_t+v \cdot \nabla u-\Delta u=|u|^p-\int_{\T^N} |u|^p \quad & \textrm{on} \quad \T^N, \\
\\
u \ \textrm{periodic} \quad & \textrm{on} \quad  \partial \T^N 
\end{cases}
$$
with initial data $u_0$ defined on $\T^N$. Here $v$ is an incompressible flow, and $\T^N=[0, 1]^N$ is the $N$-torus. 

We first prove local existence of mild solutions for arbitrary data in $L^2$. We then study the global existence of the solutions under the following two scenarios: (1). when $v$ is a mixing flow; (2). when $v$ is a shear flow. More precisely, we show that under these assumptions, there exists a global solution to the above equation in the sense of $L^2$.

\end{abstract}

\tableofcontents


\section{Introduction}
The dissipation effect appears ubiquitously in different areas, such as marine physics, thermodynamics, hydrodynamics, and medical science. Normally, the dissipation effect is known to homogenize the molecular distribution and reduce the system's energy. From the mathematical point of view, the dissipation effect corresponds to the presence of a positive operator in a partial differential equation, for instance, the negative Laplace operator that appears in the heat equation. Correspondingly, the homogenization of molecular and energy dissipation is demonstrated by the solution of the equation converging to a steady state. Many research indicates that presenting the first-order term in such equations may spontaneously enhance the dissipation effect, accelerate energy dumping, and avoid singularity formation appearing under the time evolution. 

Physically, for a time evolution equation that describes the movement of massive small particles, the first-order term can be considered as the transporting of the particles. The mechanism behind the dissipation enhancement effect is: the first-order (advection) term rearranges the particles' distribution to create small-scale structures, which are dumped dramatically by the diffusion.  This phenomenon is closely connected with turbulence, which has been studied extensively in the fluid dynamics community. Among them, the authors of \cite{CKRZ08} first provide a spectrum criterion and examples for dissipation enhancement time-independent first-order term on $\T^d$ for the following standard advection-diffusion equation:
\begin{equation*}
\theta_t+v\cdot\nabla\theta=\Delta \theta.    
\end{equation*}
Besides the spectrum consideration, another way to describe such a phenomenon is to treat such small-scale creations as mixing fluids. The study of mixing in the area of partial differential equations dates back to \cite{LTC11, Thi12}, where the authors consider transport equation
\begin{equation*}
    \eta_t+v\cdot\nabla \eta=0,
\end{equation*}
and quantify the mixing rate of $v$ in terms of the negative Sobolev norms of $\eta$. Such a framework was followed by many researchers \cite{FG19, CDE20, FHX21A}. A significant problem that arises from such a framework is how to construct a smooth and incompressible flow to satisfying the optimal mixing properties, see \cite{ZY17, TZ19} for more attempts in this direction. It is widely believed that some random flows with simple structures such as alternating sine shear flows would also be the optimal mixing flow \cite{Pierrehumbert94}. However, we are not aware of any rigorous study on it. We refer to \cite{BBP19} as a pioneering work in this direction.

Beyond the dissipation enhancement, people are also interested in the interaction between the advection, diffusion, and additional nonlinear terms in the time-evolution equations. It has been proved that the presence of an advection term could prevent the solution from forming a singularity. The earliest work trace back to \cite{KX16}, in which the authors proved that the mixing flows with large amplitude prevent blow-up of the Keller-Segel equation. Recently, the authors in \cite{IXZ21} showed that simply enhance dissipation to a certain degree (quantified in terms of dissipation time) is sufficient to suppress the formation of singularities, see \cite{FFIT19, FM20, FHX21B} for more examples in this direction. For practice, the authors also provide flow with arbitrarily short dissipation time by rescaling a general class of smooth cellular flows, which is not mixing, when the diffusive term is the Laplacian.

Another important class of flows is the shear flow. As a particular solution to the Navier-Stokes equation, discovering the stability of shear flow and the interaction with other nonlinear terms induce a great deal of research. One vital direction of these works is the (partial) dissipation enhancement of the shear flow \cite{BC17, CDE20, WZZ18, WZZ19} and its application \cite{BH17, He18, CDFM21}. Since shear flows can be considered as a mixing flow in one direction, it prevents the formation of singularities of nonlinear partial differential equations from one dimension, see \cite{BH17} as a typical example based on the Keller-Segel equation.  

On the other hand, comparing with the nonlinear time evolution equations such as Keller-Segel equations, another well-known example is the following semilinear parabolic equtaion:
\begin{equation}\label{fujita}
u_t=\triangle u+ u^p, x\in \mathbb{R}^N,
\end{equation}
here usually we consider the nonnegative solution to such equation. The study of this equation can track back to \cite{Fujita66}. Considering the global existence or finite time blow up to such equation, there is a well-known Fujita critical exponent $p_F=1+\frac{2}{N}$, for which it can be shown that (see, \cite{Fujita66,Levine90}):
\begin{enumerate}
\item [(1).]  $p_F\geq p>1$, there is no nontrivial nonnegative solution for this equation;

\medskip
\item [(2).] $p>p_F$, there exists positive solution to this problem with initial data to be sufficiently small.
\end{enumerate}
In this paper, we will consider a similar equation, with additional non-local term, on the periodic setting, and ignoring the positivity requirement, as follows:
\begin{equation} \label{withoutv}
\begin{cases}
u_t-\Delta u=|u|^p-\int_{\T^N} |u|^p \quad & \textrm{on} \quad \T^N, \\
\\
u \ \textrm{periodic} \quad & \textrm{on} \quad  \partial \T^N,
\end{cases}
\end{equation}
The problem \eqref{withoutv} was studied extensively by \cite{JK08, SJM07}, in which, the authors considered the local existence result with a continuous initial data up to boundary. Moreover, they also showed the global existence to such equations with small $L^\infty$ initial data. Equation \eqref{withoutv} is also relevant to the Navier-Stokes equations on an infinite slab (see, e.g., \cite{CWS94}), one may expect that to understand the singularity formation or global existence of the solution to such equation can help us understand the behavior of Navier-Stokes equation.\\

Following the idea of using advection term to prevent blow-up, we now consider the following non-local semi-linear parabolic equation with advection on torus $\T^N:=[0, 1]^N$ for some $N \ge 2$: for $1 \le p<1+\frac{2}{N}$, 
\begin{equation} \label{maineq}
\begin{cases}
u_t+v \cdot \nabla u-\Delta u=|u|^p-\int_{\T^N} |u|^p \quad & \textrm{on} \quad \T^N, \\
\\
u \ \textrm{periodic} \quad & \textrm{on} \quad  \partial \T^N,
\end{cases}
\end{equation}
with initial condition $u(x, 0)=u_0(x)$ on $\T^N$.  Here $v$ is an incompressible flow. The reason for us to restrict our attention on the case when $1 \le p<1+\frac{2}{N}$ will be clear from the explicit computation later. Heuristically, different from the more complicated non-negative solution case \eqref{fujita}, note that when $p$ is larger, the right hand side of \eqref{maineq} might have many ``peaks", which can possibly prevent the diffusion, this in turns suggests that we have more chance to derive a globally existed solution when $p$ is small.

The goal of this paper is to study the global existence of \eqref{maineq}. In particular, our main results can be stated as follows:
\begin{thm}\label{thm11}
Let $N=2$ or $3$. For $1\leq p<1+\frac{2}{N}$, there exists divergence free $v(t,x)\in L^{\infty}([0,\infty),L^{\infty})$, depending on $p$, $u_0$, such that the mild solution (see Definition \ref{mildsol}) to \eqref{maineq} exists globally in time. 
\end{thm}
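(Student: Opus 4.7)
The plan is to combine a standard $L^2$ energy identity with a Gagliardo--Nirenberg interpolation for the superlinear source, and then to defeat the resulting superlinear ODE using the enhanced-dissipation mechanism supplied by a mixing or shear flow; the flow $v$ will be chosen, depending on $\|u_0\|_{L^2}$ and $p$, with dissipation rate large enough to keep $\|u(t)\|_{L^2}$ inside the basin of attraction of the origin.

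First I would test the equation against $u$ and use $\nabla\cdot v=0$ to cancel the transport term, obtaining (up to a lower-order correction from the nonlocal term, which is controlled because the mean $\int_{\T^N}u$ is conserved)
\[
\tfrac{1}{2}\tfrac{d}{dt}\|u\|_{L^2}^2 + \|\nabla u\|_{L^2}^2 \le \|u\|_{L^{p+1}}^{p+1}.
\]
Gagliardo--Nirenberg on zero-mean functions on $\T^N$ gives
\[
\|u\|_{L^{p+1}} \le C\,\|\nabla u\|_{L^2}^{\theta}\,\|u\|_{L^2}^{1-\theta},\qquad \theta=\frac{N(p-1)}{2(p+1)},
\]
and since $\theta(p+1)=N(p-1)/2<1$ in the subcritical range $p<1+2/N$, Young's inequality absorbs the gradient into the dissipation, leaving the scalar differential inequality
\[
\frac{d}{dt}\|u\|_{L^2}^2 + \|\nabla u\|_{L^2}^2 \le C\,\|u\|_{L^2}^{r},
\]
with $r=2$ when $p=1$ and $r>2$ when $p>1$.

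Next I would bring in the enhanced dissipation. For a mixing or shear flow $v$ with prescribed dissipation rate $\gamma$, the linear semigroup $S^v(t)$ for $\partial_t+v\cdot\nabla-\Delta$ contracts zero-mean data as $\|S^v(t)f\|_{L^2}\le D e^{-\gamma t}\|f\|_{L^2}$, and $\gamma$ can be made arbitrarily large by rescaling $v$, at the cost of an $L^\infty_tL^\infty_x$ norm that depends on $\gamma$ but remains finite for each fixed $\gamma$. Transferring this rate into the energy inequality---either by a short-time iteration on intervals comparable to the dissipation time, in the spirit of \cite{IXZ21,FHX21B}, or by a Duhamel bootstrap---produces the effective ODE comparison
\[
y' \le -2\gamma y + C y^{r/2},\qquad y(t):=\|u(t)\|_{L^2}^2,
\]
whose origin is asymptotically stable on the basin $\{y<(2\gamma/C)^{2/(r-2)}\}$. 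Choosing $v$ so that $\gamma>\gamma_{*}(\|u_0\|_{L^2},p,N)$ places $y(0)$ inside this basin, yielding a uniform-in-time bound on $\|u(t)\|_{L^2}$; combined with the local $L^2$ mild existence, this extends the solution globally.

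Finally, the construction of admissible $v\in L^\infty_tL^\infty_x$ with arbitrarily large $\gamma$ is supplied by either a time-periodic mixing flow of the type studied in \cite{CKRZ08,ZY17,TZ19} or a suitably rescaled alternating sinusoidal shear flow. The main obstacle I anticipate is the third step: the transfer from the linear enhanced-dissipation estimate to the effective nonlinear ODE, since for $p>1$ the superlinear term is not controlled by any fixed Poincar\'e constant on data of nontrivial size. This step should require a bootstrap on short time windows of length comparable to the dissipation time, on which the linear semigroup shrinks $\|u\|_{L^2}$ by a definite factor while the nonlinear Duhamel correction, estimated via Gagliardo--Nirenberg and a running smallness hypothesis, remains subordinate.
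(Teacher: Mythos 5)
Your proposal matches the paper's approach in Section~\ref{Sec3}: a mean-zero energy estimate combined with Gagliardo--Nirenberg interpolation reduces the problem to a superlinear ODE, and the superlinear growth is then controlled by the enhanced-dissipation bootstrap of \cite{IXZ21}, which the paper invokes as a black box (Theorem~\ref{thm:IXZthm}) after verifying hypotheses (H1)--(H2) --- exactly the two G--N bounds you wrote down --- and then concludes via Corollary~\ref{cor01}. One small caveat: among your two proposed constructions of $v$ with arbitrarily fast dissipation, the rescaled alternating sine shear flow is not rigorously known to work (the paper itself flags this as open); the paper relies instead on rescaled mixing flows via \cite{FFIT19} and cellular flows via \cite{IXZ21}, as recorded in Remark~\ref{examples}.
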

\begin{thm}\label{thm12}
Let $N=2$. For $1\leq p<2$, if
\begin{equation*}
  \int_{\mathbb{T}} \left(\int_{\mathbb{T}}u_0(x_1,x_2)dx_1 \right)^2dx_2\ll 1,
\end{equation*}
then there exists shear flows ${\bf v}=(v_1(x_2), 0) \in L^\infty$, depending only $p$ and $u_0$ such that the mild solution to \eqref{maineq} exists globally in time.
\end{thm}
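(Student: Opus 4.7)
The plan is to exploit that the shear $\mathbf v = (v_1(x_2),0)$ advects only in $x_1$ and is independent of $x_1$, so it is invisible to the $x_1$-average of $u$ but generates enhanced dissipation on the $x_1$-mean-zero subspace. Writing
\[
u(t,x_1,x_2) = \bar u(t,x_2) + u^{\neq}(t,x_1,x_2), \qquad \bar u(t,x_2) := \int_{\T} u(t,x_1,x_2)\, dx_1,
\]
and integrating \eqref{maineq} in $x_1$, the transport drops out and one gets the closed 1D equation
\[
\partial_t \bar u - \partial_{x_2}^2 \bar u = \overline{|u|^p} - \int_{\T^2} |u|^p\, dx,
\]
while subtraction gives
\[
\partial_t u^{\neq} + v_1(x_2)\,\partial_{x_1} u^{\neq} - \Delta u^{\neq} = |u|^p - \overline{|u|^p}.
\]
The hypothesis reads exactly $\|\bar u_0\|_{L^2(\T)}^2 \ll 1$, while $\|u^{\neq}_0\|_{L^2(\T^2)}$ is unrestricted. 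I would choose $v_1(x_2) = A\,\tilde v_1(x_2)$ for a fixed smooth non-degenerate profile $\tilde v_1$ (such as $\sin(2\pi x_2)$) and tune the amplitude $A \gg 1$ in terms of $p$ and $u_0$, invoking the classical shear-flow enhanced dissipation: the semigroup generated by $-\Delta + v_1 \partial_{x_1}$ on the $x_1$-mean-zero subspace decays exponentially at a rate $\lam(A) \to \infty$ as $A \to \infty$.

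I would then run a continuity/bootstrap argument on the maximal interval of existence $[0,T^*)$ given by the local theorem for mild solutions. Set $E(t) := \|\bar u(t)\|_{L^2(\T)}^2 + \|u^{\neq}(t)\|_{L^2(\T^2)}^2$. Direct energy estimates give
\[
\frac{1}{2}\frac{d}{dt}\|\bar u\|_{L^2(\T)}^2 + \|\partial_{x_2}\bar u\|_{L^2(\T)}^2 = \int_{\T^2}(\bar u - m)|u|^p\, dx, \qquad m := \int_{\T^2} u_0,
\]
\[
\frac{1}{2}\frac{d}{dt}\|u^{\neq}\|_{L^2(\T^2)}^2 + \|\nabla u^{\neq}\|_{L^2(\T^2)}^2 = \int_{\T^2} u^{\neq}|u|^p\, dx,
\]
with the shear dropping out of the second line by integration by parts in $x_1$. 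The nonlinear source is controlled via the 2D Gagliardo--Nirenberg estimate $\||u|^p\|_{L^2(\T^2)} \le C\|u\|_{L^2}\|\nabla u\|_{L^2}^{p-1}$, valid precisely because $p < 2 = 1 + 2/N$; its subquadratic dependence on $\|\nabla u\|_{L^2}$ allows absorption into the dissipation by Young's inequality. For $u^{\neq}$ the enhanced dissipation yields the Duhamel bound
\[
\|u^{\neq}(t)\|_{L^2}^2 \le Ce^{-\lam(A)t}\|u^{\neq}_0\|_{L^2}^2 + C\int_0^t e^{-\lam(A)(t-s)}\||u|^p - \overline{|u|^p}\|_{L^2}^2\, ds,
\]
and taking $A$ large forces $\|u^{\neq}(t)\|_{L^2}$ to remain small after a short transient, regardless of the initial size of $u^{\neq}_0$.

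The main obstacle is the nonlinear coupling $|u|^p = |\bar u + u^{\neq}|^p$: the $\bar u$ equation is forced by pure-fluctuation contributions, and the $u^{\neq}$ equation by cross terms involving $\bar u$. I would split each forcing via the pointwise bound $\big||a+b|^p - |a|^p\big| \lesssim |b|^p + |b||a|^{p-1}$ (valid for $1\le p < 2$) into a piece controlled by the smallness of $\bar u$ (here the fact that $p < 2$ lies below the 1D Fujita exponent $1+2/1=3$ keeps the pure-$\bar u$ dynamics subcritical) and a piece controlled by the enhanced dissipation of $u^{\neq}$. Choosing $A$ large enough that $\lam(A)$ dominates the Gronwall constants produced by the Gagliardo--Nirenberg/Young steps, and $\|\bar u_0\|_{L^2(\T)}$ small enough that the 1D piece closes --- both permitted by the hypotheses --- yields a uniform bound on $E(t)$ over $[0,T^*)$. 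Combined with the local $L^2$ existence theorem, this forces $T^* = \infty$.
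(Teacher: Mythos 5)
Your proposal follows essentially the same route as the paper: decompose $u=\bar u+u^{\neq}$ along the shear direction, exploit enhanced dissipation of the shear semigroup on the $x_1$-mean-zero component (the paper uses the quantitative version from Coti Zelati--Delgadino--Elgindi with rate $\lambda_\nu\sim\nu^{2/(m+2)}$ and viscosity rescaling $\nu=A^{-1}$ rather than tracking the amplitude $A$ directly), treat $\bar u$ by a small-data energy estimate using one-dimensional Gagliardo--Nirenberg/Young plus Poincar\'e, split the nonlinear coupling via the pointwise bound $\big||a+b|^p-|a|^p\big|\lesssim |a|\big(|a|^{p-1}+|b|^{p-1}\big)$ (the paper's Lemma 4.10), and close everything with a bootstrap/continuation argument off the local mild-solution theory and blow-up criterion. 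Two points you gloss over but that the paper makes essential use of and that you would need to spell out: (i) you must also carry the uniform-in-time bound $\nu\int_0^t\|\partial_{x_2}\bar u\|_{L^2}^2\,d\tau\lesssim 1$ alongside the $L^\infty_t L^2$ bound on $\bar u$, since the cross term $\int u^{\neq}|\bar u|^{p-1}$ in the $u^{\neq}$ energy estimate requires it after Gagliardo--Nirenberg; and (ii) because the nonlinearity is superlinear in $\|\nabla u^{\neq}\|_{L^2}$ a plain Gr\"onwall with rate $\lambda(A)$ will not close by itself, so the argument really must be a self-improving bootstrap (the paper improves the constants $20\to15$, $10\to5$) rather than a single Duhamel--Gr\"onwall sweep; your phrasing ``$\lambda(A)$ dominates the Gr\"onwall constants'' should be read in that spirit.
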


\begin{rem}
 In the sequel, we will study both Theorem \ref{thm11} and Theorem \ref{thm12} in a more precise and quantitative way, see Section \ref{Sec3} and Section \ref{Sec4}, respectively. 
\end{rem}

We start with some basic setup. Given a function $f \in L^p(\T^N)$, we denote $\hat{f}({\bf k})$ to be its Fourier coefficient of $f$ at frequency ${\bf k} \in \Z^N$. Then the \emph{inhomogeneous Sobolev space} $H^s(\T^N), s \in \R$ is defined to be the collection of all measurable functions $f$ on $\T^N$, with
$$
\|f\|_{H^s}^2:=\sum_{{\bf k} \in \Z^N} (1+|{\bf k}|^2)^s \left| \hat{f}({\bf k}) \right|^2=\left\| \left(I-\Delta \right)^{s/2} f \right\|_2^2<\infty, 
$$
while the \emph{homogeneous Sobelev space} $\dot{H}^s(\T^N), s \in \R$ consists of all measurable functions $f$ on $\T^N$ with 
$$
\|f\|_{\dot{H}^s}^2:=\sum_{{\bf k} \in \Z^N\setminus\{{\bf 0}\}} |{\bf k}|^{2s} \left| \hat{f}({\bf k}) \right|^2=\left\| \left(-\Delta \right)^{s/2} f \right\|_2^2<\infty.
$$
Note that it is clear that for $f \in L^2(\T^N)$, $f \in H^s(\T^N)$ if and only if $f \in \dot{H}^s(\T^N)$.

Let $e^{t\Delta}$ be the semigroup generated by the Laplacian $\Delta$, namely, 
$$
e^{t\Delta}f= \calF^{-1} \left(e^{-t\left|{\bf k} \right|^2} \hat{f} \right),
$$
where $\calF^{-1}$ is the inverse Fourier transform on $\Z^N$.

Here are some basic properties of the  semigroup $e^{t\Delta}$, whose proof are straightforward from the definitions. 

\begin{lem} \label{lem01}
The following estimates hold:
\begin{enumerate}
    \item [(1).] $\left\| (-\Delta)^{\frac{s}{2}} e^{t\Delta} f \right\|_{L^2} \le C_st^{-\frac{s}{2}} \|f\|_{L^2}$;
    \medskip
    \item [(2).] For any $1 \le p \le 2$, 
    \begin{equation} \label{eq2020}
    \left\|e^{t\Delta} f\right\|_{L^2} \lesssim t^{-\frac{N(p-1)}{4}} \|f\|_{L^{\frac{2}{p}}}. 
    \end{equation}
\end{enumerate}
\end{lem}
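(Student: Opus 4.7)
The plan is to handle each estimate on the Fourier side, exploiting that $e^{t\Delta}$ and $(-\Delta)^{s/2}$ are both Fourier multipliers on $\T^N$.

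For part (1), I would first apply Plancherel to obtain
\[
\left\| (-\Delta)^{s/2} e^{t\Delta} f \right\|_{L^2}^2 = \sum_{\mathbf{k} \in \Z^N} |\mathbf{k}|^{2s} e^{-2t|\mathbf{k}|^2} |\hat{f}(\mathbf{k})|^2.
\]
The substitution $y = t|\mathbf{k}|^2$ decouples the $t$-dependence: $|\mathbf{k}|^{2s} e^{-2t|\mathbf{k}|^2} = t^{-s}(y^{s} e^{-2y}) \le C_s t^{-s}$, where $C_s := \sup_{y \ge 0} y^s e^{-2y}$ is finite. Inserting this pointwise bound under the sum and invoking Plancherel once more gives the claim.

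For part (2), I see two essentially equivalent routes. The first is via Young's convolution inequality applied to $e^{t\Delta}f = K_t \ast f$, where $K_t$ is the heat kernel on $\T^N$. Choosing $r$ by $\frac{1}{r} + \frac{p}{2} = \frac{3}{2}$, equivalently $1 - \frac{1}{r} = \frac{p-1}{2}$, yields $\|e^{t\Delta} f\|_{L^2} \le \|K_t\|_{L^r} \|f\|_{L^{2/p}}$, and the scaling $K_t(x) = t^{-N/2}K_1(x/\sqrt{t})$ for the Euclidean heat kernel produces $\|K_t\|_{L^r(\R^N)} \lesssim t^{-\frac{N}{2}(1-1/r)} = t^{-N(p-1)/4}$. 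Periodizing $K_t^{\R^N}$ over $\Z^N$ transfers the same bound to $\T^N$ on any bounded range of $t$. The second route is Riesz--Thorin interpolation between the trivial estimate $\|e^{t\Delta}\|_{L^2 \to L^2} \le 1$ (from Plancherel) and $\|e^{t\Delta}\|_{L^1 \to L^2} \lesssim t^{-N/4}$ (from Cauchy--Schwarz against $\|K_t\|_{L^2} \lesssim t^{-N/4}$); the interpolation parameter $\theta = p - 1 \in [0,1]$ recovers exactly the exponent $-N(p-1)/4$.

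I do not anticipate a genuine obstacle here, since both statements are textbook consequences of the Fourier representation of the heat semigroup. The only point that deserves attention is the large-$t$ behavior on the torus, where the zero mode is preserved and the pure power $t^{-N(p-1)/4}$ cannot be uniformly valid as $t \to \infty$ without an implicit additive constant or a mean-zero hypothesis on $f$. For the local existence argument downstream this is harmless, as the estimate is only invoked on a bounded interval of $t$.
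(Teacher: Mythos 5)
Your part (1) is exactly the paper's (unwritten) Fourier-side argument. Your part (2), however, takes a genuinely different route. The paper stays entirely on the frequency side: it applies H\"older's inequality with the conjugate pair $\bigl(\tfrac{1}{2-p},\tfrac{1}{p-1}\bigr)$ to the sum $\sum_{\mathbf{k}} e^{-2t|\mathbf{k}|^2}|\hat f(\mathbf{k})|^2$, bounds the Gaussian factor by a Euclidean integral $\int_{\R^N} e^{-2t|x|^2/(p-1)}\,dx \lesssim t^{-N(p-1)/2}$, and finishes with the Hausdorff--Young inequality $\|\hat g\|_{\ell^{2/(2-p)}} \lesssim \|g\|_{L^{2/p}}$. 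You instead work on the physical side, either via Young's convolution inequality against the $L^r$ norm of the (periodized) heat kernel, or by Riesz--Thorin interpolating between the trivial $L^2\to L^2$ bound and the $L^1\to L^2$ bound $\lesssim t^{-N/4}$. All three arguments are correct and deliver the same exponent; the paper's has the minor advantage of never leaving $\Z^N$ (no need to justify periodization of the Euclidean kernel), while your interpolation route is arguably the cleanest and makes the exponent's origin transparent. Your final observation is not only correct but exposes a gap the paper silently skips over: the comparison $\sum_{\mathbf{k}\in\Z^N} e^{-2t|\mathbf{k}|^2/(p-1)} \lesssim \int_{\R^N} e^{-2t|x|^2/(p-1)}\,dx$ fails for large $t$ because the $\mathbf{k}=0$ term contributes $1$ while the integral decays to $0$, so the stated bound is valid only on a bounded $t$-interval (which is all that is used, since $T\le 1$) or for mean-zero $f$. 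You are right that this is harmless downstream, but it is good that you flagged it.
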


\begin{proof}
The first assertion is straightforward by analysing the Fourier side, and hence we omit it here. While the second assertion follows from interpolation. More precisely, by H\"older's inequality with the conjugate pair $\left(\frac{1}{2-p}, \frac{1}{p-1} \right)$, we have
\begin{eqnarray*}
\|e^{t\Delta} f\|_{L^2}^2%
&=& \sum_{{\bf k} \in \Z^N} e^{-2t|{\bf k}|^2} \left| \hat{f} \left({\bf k} \right) \right|^2 \\ 
&\le& \left(\sum_{{\bf k} \in \Z^N} e^{\frac{-2t|{\bf k}|^2}{p-1}} \right)^{p-1} \cdot \left( \sum_{{\bf k} \in \Z^N} \left| \hat{f}({\bf k}) \right|^{\frac{2}{2-p}} \right)^{2-p} \\
&\lesssim& \|f\|_{L^{\frac{2}{p}}}^2 \cdot \left(\int_{\R^N} e^{-\frac{2t|x|^2}{p-1}} dx \right)^{p-1} \lesssim t^{-\frac{N(p-1)}{2}} \cdot \|f\|_{L^{\frac{2}{p}}}^2, 
\end{eqnarray*}
which clearly implies the desired estimate \eqref{eq2020}. Here in the above estimate, we have used the Hausdoff-Young's inequality on $\T^N$ (or interpolation) 
$$
\|\hat{g} \|_{\ell^q (\Z^N)} \lesssim \|g\|_{L^{q'}(\T^N)}, \quad  q \ge 2,
$$
where $q=\frac{2}{2-p}$ in our case. 
\end{proof}

In this paper, we will mainly consider the mild solution to \eqref{maineq}, which defines as follows. 

\begin{defn} \label{mildsol}
Let $1 \le p<1+\frac{2}{N}$, $v(t,x)\in L^{\infty}([0,\infty);L^2(\mathbb{T}^N))$. A function $u \in C\left([0, T]; L^2(\T^N) \right)$, $T>0$ with $\nabla u(t) \in L^2(\T^N)$ for each $0<t\leq T$, is called a \emph{mild solution} of \eqref{maineq} with initial data $u_0 \in L^2(\T^N)$, if it satisfies 
\begin{eqnarray*}
u(t)=\calN(u)(t)%
&:=& e^{t \Delta} u_0+\int_0^t e^{(t-\tau)\Delta} \left(|u|^p \right)d\tau -\int_0^t e^{(t-\tau)\Delta} \left(v \cdot \nabla u \right)d\tau \\
&& -\int_0^t \int_{\T^N} |u|^p dxd\tau,
\end{eqnarray*}
where the integrals above are in the sense of B\"ochner integral. 
\end{defn}

In addition, the definition of the weak solution of \eqref{maineq} is as follows.

\begin{defn}
Let $1 \le p<1+\frac{2}{N}$, $v(t,x)\in L^{\infty}([0,\infty);L^2(\mathbb{T}^N))$. A function $u \in L^\infty \left([0, T]; L^2\left(\T^N \right) \right) \cap L^2\left([0, T]; H^1(\T^N) \right)$ is called a \emph{weak solution} of \eqref{maineq} on $[0, T)$ with initial value $u(0)=u_0 \in L^2(\T^N)$ if, for all $\varphi \in C_c^\infty\left([0, T) \times \T^N \right)$, 
\begin{eqnarray} \label{weaksol}
&& \int_{\T^N} u_0\varphi(0)dx+\int_0^T \int_{\T^N} u \partial_t \varphi dxdt = \int_0^T \int_{\T^N} \varphi \left(v \cdot \nabla u\right) dxdt \nonumber \\
&&\quad \quad \quad \quad -\int_0^T \int_{\T^N} |u|^p\varphi dxdt   +\int_0^T \int_{\T^N} \nabla u \nabla \varphi dxdt \nonumber \\
&&\quad \quad \quad \quad +\int_0^T \left(\int_{\T^N} |u|^pdx \right) \left(\int_{\T^N} \varphi dx \right) dt.
\end{eqnarray}
and $\partial_t u \in L^2\left(\left[0, T\right]; H^{-1}(\T^N) \right)$. 
\end{defn}

\section{Short-time existence of the mild solution with data in $L^2$}\label{sec2}

\begin{defn}
For any $0<T<1$, we let the Banach space $X_T$ to be 
$$
X_T:=C\left([0, T]; L^2(\T^N) \right) \cap \left\{u: \T^N \times \R_+ \to \R \ \big|  \ \sup_{0<t \le T} t^{\frac{1}{2}} \|\nabla u\|_{L^2}<\infty \right\},
$$
with the norm 
$$
\|u\|_{X_T}:=\max \left( \sup_{0 \le t \le T} \|u\|_{L^2}, \ \sup_{0<t \le T} t^{\frac{1}{2}} \|\nabla u \|_{L^2} \right). 
$$
\end{defn}

We have the following result. 

\begin{thm} \label{mainthm01}
Let $1 \le p<1+\frac{2}{N}$, $u_0 \in L^2(\T^N)$ and let $v \in L^\infty \left(\R_+; L^\frac{2}{p-1} \right)$. There exists  $0<T \le 1$ depending on $\sup_{t>0} \|v\|_{L^\frac{2}{p-1}}$ and $\|u_0\|_{L^2}$ such that \eqref{maineq} admits a mild solution $u$ on $[0, T]$, which is unique in $X_T$.
\end{thm}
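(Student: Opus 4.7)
The plan is to apply Banach's fixed-point theorem to the map $\calN$ from Definition~\ref{mildsol} on the closed ball
$$
B_R := \bigl\{u \in X_T \;:\; \|u\|_{X_T} \le R\bigr\}, \qquad R := 2\|u_0\|_{L^2},
$$
choosing $T \in (0,1]$ small in terms of $R$ and $V := \sup_{t>0}\|v(t)\|_{L^{2/(p-1)}}$. The linear piece $e^{t\Delta}u_0$ contributes exactly $\|u_0\|_{L^2}$ to the $\sup\|\cdot\|_{L^2}$ component and $\lesssim \|u_0\|_{L^2}$ to the $\sup t^{1/2}\|\nabla\cdot\|_{L^2}$ component via Lemma~\ref{lem01}(1) with $s=0,1$. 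The remaining three terms must each be shown to gain a factor $T^{\alpha}$ with $\alpha>0$, so that for $T$ small enough $\calN$ maps $B_R$ into itself.

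For the source term, $\||u|^p\|_{L^{2/p}} = \|u\|_{L^2}^p$, so Lemma~\ref{lem01}(2) gives
$$
\bigl\|e^{(t-\tau)\Delta}|u|^p\bigr\|_{L^2} \lesssim (t-\tau)^{-N(p-1)/4}\|u(\tau)\|_{L^2}^p,
$$
and the splitting $e^{(t-\tau)\Delta} = e^{(t-\tau)\Delta/2}e^{(t-\tau)\Delta/2}$ together with Lemma~\ref{lem01}(1) produces the analogous gradient bound with an extra $(t-\tau)^{-1/2}$. Integration in $\tau$ yields factors $T^{1-N(p-1)/4}$ for the $L^2$ component and $T^{1/2-N(p-1)/4}$ for the $t^{1/2}\|\nabla\cdot\|_{L^2}$ component, both strictly positive \emph{precisely because} $p < 1 + 2/N$; this is the quantitative origin of the restriction on $p$. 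The nonlocal term $\int_0^t\int_{\T^N}|u|^p\,dx\,d\tau$ is spatially constant with vanishing gradient, and since $|\T^N|=1$ it is bounded in $L^2$ by $\int_0^t \|u(\tau)\|_{L^p}^p\,d\tau \le T\,\sup_\tau\|u(\tau)\|_{L^2}^p$.

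For the advection term, H\"older's inequality gives
$$
\|v\cdot\nabla u\|_{L^{2/p}} \le \|v\|_{L^{2/(p-1)}}\|\nabla u\|_{L^2},
$$
so Lemma~\ref{lem01}(2) yields $\|e^{(t-\tau)\Delta}(v\cdot\nabla u)\|_{L^2} \lesssim V\,(t-\tau)^{-N(p-1)/4}\|\nabla u(\tau)\|_{L^2}$, with the same splitting trick furnishing the extra $(t-\tau)^{-1/2}$ for the gradient. The $X_T$ weight $\|\nabla u(\tau)\|_{L^2} \le \tau^{-1/2}\|u\|_{X_T}$ turns the time integrals into Beta integrals $B(1-N(p-1)/4,\tfrac12)$ and $B(\tfrac12-N(p-1)/4,\tfrac12)$, both finite precisely when $p<1+2/N$, and the resulting $T$-prefactors $T^{1/2-N(p-1)/4}$ and $T^{-N(p-1)/4}$ (against the $t^{1/2}$ in front of the gradient norm) are again positive powers of $T$.

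The contraction step is a routine repetition using the elementary bound $\bigl||u|^p - |\tilde u|^p\bigr| \le C_p(|u|^{p-1}+|\tilde u|^{p-1})|u-\tilde u|$, followed by H\"older to obtain $\||u|^p-|\tilde u|^p\|_{L^{2/p}} \lesssim (\|u\|_{L^2}^{p-1}+\|\tilde u\|_{L^2}^{p-1})\|u-\tilde u\|_{L^2}$; the advection difference $v\cdot\nabla(u-\tilde u)$ is linear and handled verbatim. Choosing $T$ small then makes $\calN$ a strict contraction on $B_R$, and its fixed point is the desired mild solution; uniqueness in $X_T$ follows by applying the same contraction estimate to any pair of mild solutions (after possibly shrinking $T$ to enclose both in a common ball, then bootstrapping). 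The main technical obstacle is checking that every time integral of the form $\int_0^t(t-\tau)^{-a}\tau^{-b}\,d\tau$ that arises is both convergent and leaves a strictly positive power of $T$; this forces $N(p-1)/4 + 1/2 < 1$, i.e.\ $p < 1 + 2/N$, and is where the hypothesis on $p$ is used sharply.
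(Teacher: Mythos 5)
Your proposal follows essentially the same route as the paper's proof: estimate the four pieces of $\calN$ separately on the weighted space $X_T$ using the semigroup bounds of Lemma~\ref{lem01}, the identity $\| |u|^p \|_{L^{2/p}} = \|u\|_{L^2}^p$, and the H\"older pairing $\|v\cdot\nabla u\|_{L^{2/p}} \le \|v\|_{L^{2/(p-1)}}\|\nabla u\|_{L^2}$, with the constraint $p<1+2/N$ arising exactly from the convergence of the Beta-type time integrals and the positivity of the resulting $T$-powers; the contraction step via $||a|^p-|b|^p|\lesssim(|a|^{p-1}+|b|^{p-1})|a-b|$ and the subsequent Banach fixed-point argument are likewise identical in substance. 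The only differences are cosmetic (the ball radius $R=2\|u_0\|_{L^2}$ versus the paper's $M=10\mathfrak C\|u_0\|_{L^2}$, and your slightly more explicit remark on bootstrapping uniqueness beyond the ball), so the proposal is correct.
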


We divide the proof of Theorem \ref{mainthm01} into several parts. 

\begin{lem} \label{lem012}
Under the assumption of Theorem \ref{mainthm01}, we have for any $0<T \le 1$
$$
\calN(u) \in C\left([0, T]; L^2(\T^N) \right). 
$$
Moreover, for each $t \in (0, T]$, there exists some $C_1=C_1(N, p)>0$, such that
\begin{equation} \label{eq100}
\|\calN(u)(t)\|_{L^2} \le C_1 \left(\|u_0\|_{L^2}+t^{1-\frac{N(p-1)}{4}} \|u\|_{X_T}^p+  t^{\frac{2-N(p-1)}{4}} \|v\|_{L^\infty\left(\R_+; L^\frac{2}{p-1}\right)} \|u\|_{X_T}  \right). 
\end{equation} 
\end{lem}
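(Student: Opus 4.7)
\bigskip

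\noindent\textbf{Proof proposal.} The plan is to estimate the four summands in the definition of $\calN(u)(t)$ separately in $L^2(\T^N)$, applying Lemma \ref{lem01} to each and matching the exponents to those appearing in \eqref{eq100}. The heat term $e^{t\Delta}u_0$ is controlled by $\|u_0\|_{L^2}$ by the Fourier-side contraction property. The purely scalar (mean-value) term $\int_0^t\!\int_{\T^N}|u|^p\,dx\,d\tau$ is handled by H\"older on the probability space $\T^N$: since $p< 1+\tfrac{2}{N}\le 2$, we have $\int_{\T^N}|u|^p\,dx\le \|u\|_{L^2}^p\le \|u\|_{X_T}^p$, and then time-integration produces a factor $t$, which is dominated by $t^{1-N(p-1)/4}$ because $t\le T\le 1$ and $1-\tfrac{N(p-1)}{4}<1$.

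The nonlinear Duhamel term $\int_0^t e^{(t-\tau)\Delta}(|u|^p)\,d\tau$ is the first place where Lemma \ref{lem01}(2) intervenes: since $|u|^p\in L^{2/p}$ with $\||u|^p\|_{L^{2/p}}=\|u\|_{L^2}^p$, estimate \eqref{eq2020} yields
\[
\bigl\|e^{(t-\tau)\Delta}(|u|^p)\bigr\|_{L^2}\lesssim (t-\tau)^{-N(p-1)/4}\|u\|_{L^2}^p,
\]
so the restriction $p<1+\tfrac{2}{N}$ (which gives $\tfrac{N(p-1)}{4}<\tfrac{1}{2}<1$) makes this kernel integrable, producing $t^{1-N(p-1)/4}\|u\|_{X_T}^p$ after integrating in $\tau$.

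The advection term is the delicate one. The key observation is that by H\"older's inequality with exponents $\bigl(\tfrac{2}{p-1},2\bigr)$ one has $v\cdot\nabla u\in L^{2/p}$, since $\tfrac{p-1}{2}+\tfrac{1}{2}=\tfrac{p}{2}$, with
\[
\|v\cdot\nabla u\|_{L^{2/p}}\le \|v\|_{L^{2/(p-1)}}\|\nabla u\|_{L^2}.
\]
Applying \eqref{eq2020} again and using the defining bound $\|\nabla u(\tau)\|_{L^2}\le \tau^{-1/2}\|u\|_{X_T}$ from the $X_T$ norm leads to the integral
\[
\int_0^t (t-\tau)^{-N(p-1)/4}\tau^{-1/2}\,d\tau = t^{\frac{2-N(p-1)}{4}}\,B\!\left(\tfrac{1}{2},\,1-\tfrac{N(p-1)}{4}\right),
\]
and the Beta function is finite precisely because $p<1+\tfrac{2}{N}$. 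This yields the $t^{(2-N(p-1))/4}$ factor in \eqref{eq100}, and this step will be the main obstacle to track carefully: one must verify both H\"older compatibility of the exponents and simultaneous integrability at $\tau=0$ (the $\tau^{-1/2}$ singularity) and $\tau=t$ (the heat-kernel singularity).

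Finally, for the continuity statement $\calN(u)\in C([0,T];L^2)$, the heat-semigroup term is strongly continuous on $L^2$, the mean-value term is absolutely continuous in $t$ by the same H\"older bound used above, and the two Duhamel integrals are shown to be continuous in $L^2$ by a standard dominated-convergence argument: for $0\le s<t\le T$ the difference splits into an integral over $[s,t]$ (which is small because the singular kernel is integrable, exactly as in the previous paragraphs) and a tail $\int_0^s\bigl(e^{(t-\tau)\Delta}-e^{(s-\tau)\Delta}\bigr)(\cdot)\,d\tau$ whose integrand tends to $0$ in $L^2$ pointwise in $\tau$ and is dominated by the integrable majorants already produced. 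Combining these bounds gives \eqref{eq100} with a constant $C_1$ depending only on $N$ and $p$.
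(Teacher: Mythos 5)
Your proposal follows essentially the same route as the paper: decompose $\calN(u)$ into its four summands, apply Lemma \ref{lem01} (in particular estimate \eqref{eq2020}) to the two Duhamel terms, use H\"older with exponents $\bigl(\tfrac{2}{p-1},2\bigr)$ on $v\cdot\nabla u$, and pull out the $\tau^{1/2}\|\nabla u(\tau)\|_{L^2}$ factor from the $X_T$ norm before evaluating the Beta-type time integral. One small imprecision worth noting: the integral $\int_0^t(t-\tau)^{-N(p-1)/4}\tau^{-1/2}\,d\tau$ converges whenever $N(p-1)/4<1$, i.e.\ $p<1+\tfrac{4}{N}$, so the hypothesis $p<1+\tfrac{2}{N}$ is sufficient but not "precisely" the threshold for that Beta function; the stronger restriction is used elsewhere (e.g.\ to guarantee $\tfrac{2-N(p-1)}{4}>0$ and in Lemma \ref{lem02}).
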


\begin{proof}
The first claim follows clearly from the basic properties of semigroups and B\"ocher integrals, and hence we only focus on proving the estimate \eqref{eq100}. For any $t \in (0, 1]$, using Lemma \ref{lem01}, we have 
\begin{eqnarray} \label{fengyuxiaomeiren}
\|\calN(u)(t)\|_{L^2}%
&\lesssim& \|e^{t\Delta}u_0\|_{L^2}+ \int_0^t \int_{\T^N} |u|^pdx d\tau + \int_0^t \left\|e^{(t-\tau)\Delta}\left(|u(\tau)|^p \right) \right\|_{L^2} d\tau \nonumber \\
&& \quad \quad \quad + \int_0^t \left\|e^{(t-\tau)\Delta}\left(v(\tau) \cdot \nabla u(\tau) \right) \right\|_{L^2} d\tau \nonumber \\
&\lesssim& \|u_0\|_{L^2}+t\|u\|_{X_T}^p+\int_0^t (t-\tau)^{-\frac{N(p-1)}{4}} \left\||u(\tau)|^p \right\|_{L^\frac{2}{p}} d\tau  \nonumber \\
&&\quad \quad \quad  +\int_0^t (t-\tau)^{-\frac{N(p-1)}{4}} \|v(\tau) \cdot \nabla u(\tau)\|_{L^{\frac{2}{p}}} d\tau \nonumber \\
&=& \|u_0\|_{L^2}+t\|u\|_{X_T}^p+\int_0^t (t-\tau)^{-\frac{N(p-1)}{4}} \left\|u(\tau) \right\|_{L^2}^p d\tau  \nonumber \\
&&\quad \quad \quad  +\int_0^t (t-\tau)^{-\frac{N(p-1)}{4}} \|v(\tau) \cdot \nabla u(\tau)\|_{L^\frac{2}{p}} d\tau. 
\end{eqnarray} 
For the first integral in \eqref{fengyuxiaomeiren}, using the fact that $1 \le p<1+\frac{2}{N}$, we see that we can bound it by
\begin{equation} \label{xuanxuan}
\|u\|_{X_T}^p  \cdot \int_0^t (t-\tau)^{-\frac{N(p-1)}{4}} d\tau \lesssim t^{1-\frac{N(p-1)}{4}} \|u\|_{X_T}^p. 
\end{equation} 
While for the second integral \eqref{fengyuxiaomeiren}, we can bound it above by 
\begin{eqnarray*}
&&\int_0^t (t-\tau)^{-\frac{N(p-1)}{4}} \|v(\tau)\|_{L^\frac{2p'}{p}} \|\nabla u(\tau)\|_{L^2} d\tau \\
&& = \int_0^t (t-\tau)^{-\frac{N(p-1)}{4}} \tau^{-\frac{1}{2}} \|v(\tau)\|_{L^{\frac{2}{p-1}}} \left(\tau^{\frac{1}{2}}\|\nabla u(\tau)\|_{L^2} \right) d\tau \\
&& \le \|v\|_{L^\infty\left(\R_+; L^\frac{2}{p-1}\right)} \cdot \|u\|_{X_T} \cdot \int_0^t (t-\tau)^{-\frac{N(p-1)}{4}} \tau^{-\frac{1}{2}} d\tau \\
&& \lesssim t^{\frac{2-N(p-1)}{4}} \cdot  \|v\|_{L^\infty\left(\R_+; L^\frac{2}{p-1} \right)} \cdot \|u\|_{X_T}.
\end{eqnarray*}
Note that $\frac{2-N(p-1)}{4}>0$, which is guaranteed by the assumption $p<1+\frac{2}{N}$. Combining the above estimates with \eqref{xuanxuan} and \eqref{fengyuxiaomeiren}, \eqref{eq100} clearly holds. 
\end{proof}

\begin{lem} \label{lem02}
Under the assumption of Lemma \ref{lem012}, we have for each $t \in (0, T]$, there exists some $C_2=C_2(N, p)>0$, such that
\begin{equation} \label{eq200}
t^{\frac{1}{2}} \left\|\nabla \calN(u)(t) \right\|_{L^2}  \le C_2 \left(\|u_0\|_{L^2}+t^{1-\frac{N(p-1)}{4}} \|u\|_{X_T}^p+  t^{\frac{2-N(p-1)}{4}} \|v\|_{L^\infty\left(\R_+; L^\frac{2}{p-1}\right)} \|u\|_{X_T}  \right). 
\end{equation} 
\end{lem}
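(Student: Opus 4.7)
The plan is to mimic the structure of the proof of Lemma~\ref{lem012}, differentiating each of the four terms defining $\calN(u)$ and invoking the semigroup estimates of Lemma~\ref{lem01} one additional time. A useful first observation is that the last term in $\calN(u)$, namely the spatial average $\int_0^t \int_{\T^N}|u|^p\,dx\,d\tau$, is independent of the spatial variable, so its gradient vanishes identically and may be discarded; it remains to bound the $L^2$-norm of the gradient of the free evolution $e^{t\Delta}u_0$ and of the two Duhamel-type integrals.

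For the free evolution term, Lemma~\ref{lem01}(1) with $s=1$ directly yields $\|\nabla e^{t\Delta}u_0\|_{L^2} \lesssim t^{-1/2}\|u_0\|_{L^2}$, so multiplication by $t^{1/2}$ produces the $\|u_0\|_{L^2}$ contribution in \eqref{eq200}. For the remaining two Duhamel terms, the key idea is to factor the heat semigroup as $e^{(t-\tau)\Delta} = e^{(t-\tau)\Delta/2}\circ e^{(t-\tau)\Delta/2}$: apply Lemma~\ref{lem01}(1) with $s=1$ to one factor to produce the gradient at cost $(t-\tau)^{-1/2}$, and apply Lemma~\ref{lem01}(2) to the second factor to move from $L^{2/p}$ to $L^2$ at cost $(t-\tau)^{-N(p-1)/4}$. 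The resulting time kernel is $(t-\tau)^{-1/2 - N(p-1)/4}$ for both terms.

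For the nonlinear source $|u|^p$, inserting the pointwise identity $\||u(\tau)|^p\|_{L^{2/p}}=\|u(\tau)\|_{L^2}^p$ gives
$$
\int_0^t (t-\tau)^{-\frac{1}{2}-\frac{N(p-1)}{4}}\,\|u(\tau)\|_{L^2}^p\,d\tau \;\lesssim\; t^{\frac{1}{2} - \frac{N(p-1)}{4}}\,\|u\|_{X_T}^p,
$$
and the outer factor $t^{1/2}$ reproduces the $t^{1-N(p-1)/4}\|u\|_{X_T}^p$ term of \eqref{eq200}. For the advection term I would proceed exactly as in \eqref{fengyuxiaomeiren}, estimating $\|v\cdot\nabla u\|_{L^{2/p}} \le \|v\|_{L^{2/(p-1)}}\|\nabla u\|_{L^2}$ via H\"older with conjugate pair $\bigl(\tfrac{2}{p-1},2\bigr)$, then absorbing $\tau^{1/2}\|\nabla u(\tau)\|_{L^2}$ into $\|u\|_{X_T}$ at the price of an additional $\tau^{-1/2}$ weight. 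The remaining Beta-type integral
$$
\int_0^t (t-\tau)^{-\frac{1}{2}-\frac{N(p-1)}{4}}\,\tau^{-\frac{1}{2}}\,d\tau \;=\; t^{-\frac{N(p-1)}{4}}\, B\!\left(\tfrac{1}{2},\,\tfrac{1}{2}-\tfrac{N(p-1)}{4}\right),
$$
combined with the outer $t^{1/2}$ weight, yields the $t^{(2-N(p-1))/4}$ factor in \eqref{eq200}.

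The only subtle point is the convergence at $\tau \to t$ of this last integral, which requires $\tfrac{1}{2} - \tfrac{N(p-1)}{4} > 0$, i.e., $p < 1+\tfrac{2}{N}$. This is precisely the subcriticality assumption already invoked in Lemma~\ref{lem012}, so no new obstruction appears; compared with that lemma, the new ingredient is merely the one additional application of Lemma~\ref{lem01}(1) in each Duhamel piece, which has the effect of multiplying the time exponents by the derivative weight $t^{1/2}$ while shifting the kernel exponent by $-\tfrac12$. The bound \eqref{eq200} then follows with a constant $C_2 = C_2(N,p)$.
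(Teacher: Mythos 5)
Your proposal is correct and follows essentially the same route as the paper's proof: bound the free evolution via Lemma~\ref{lem01}(1), drop the spatially constant nonlocal term whose gradient vanishes, and for the two Duhamel pieces factor $e^{(t-\tau)\Delta}$ into two half-time steps, one absorbing the gradient at cost $(t-\tau)^{-1/2}$ and the other carrying the $L^{2/p}\to L^2$ smoothing at cost $(t-\tau)^{-N(p-1)/4}$, finishing with the same Beta-type integrals. The only cosmetic difference is that the paper leaves the zero-gradient observation implicit, whereas you state it explicitly.
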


\begin{proof}
For any $t \in (0, 1]$, 
\begin{eqnarray} \label{xuanku}
\left\|\nabla \calN(u)(t) \right\|_{L^2} %
&\lesssim& \left\|\nabla e^{t\Delta} u_0\right\|_{L^2}+\int_0^t \left\| \nabla e^{(t-\tau)\Delta} (|u|^p) \right\|_{L^2}d \tau \nonumber \\
&& +\int_0^t \left\| \nabla e^{(t-\tau)\Delta}(v \cdot \nabla u) \right\|_{L^2} d\tau \nonumber \\
&\lesssim& t^{-\frac{1}{2}} \|u_0\|_{L^2}+\int_0^t \left\| \nabla e^{(t-\tau)\Delta} (|u|^p) \right\|_{L^2}d \tau \nonumber \\
&& +\int_0^t \left\| \nabla e^{(t-\tau)\Delta}(v \cdot \nabla u) \right\|_{L^2} d\tau.
\end{eqnarray}
We estimate the two integrals in \eqref{xuanku} as follows. For the first term, 
\begin{eqnarray} \label{xuan01}
&& \int_0^t \left\| \nabla e^{(t-\tau)\Delta} (|u|^p) \right\|_{L^2}d \tau \nonumber \\
&& \le  \int_0^t \left\|\nabla e^{\frac{(t-\tau)\Delta}{2}} \right\|_{L^2 \to L^2} \left\| e^{\frac{(t-\tau)\Delta}{2}} \left(|u|^p \right) \right\|_{L^2} d\tau \nonumber \\
&& \lesssim \int_0^t (t-\tau)^{-\frac{1}{2}} (t-\tau)^{-\frac{N(p-1)}{4}} \||u|^p\|_{L^\frac{2}{p}}d\tau  \nonumber\\
&& = \int_0^t (t-\tau)^{-\frac{1}{2}} (t-\tau)^{-\frac{N(p-1)}{4}} \|u(\tau)\|_{L^2}^p d\tau  \nonumber\\
&& \le \|u\|_{X_T}^p \cdot \int_0^t (t-\tau)^{-\frac{2+N(p-1)}{4}} d\tau  \lesssim t^{\frac{2-N(p-1)}{4}} \cdot  \|u\|_{X_T}^p 
\end{eqnarray}
Now for the second integral, we have
\begin{eqnarray} \label{xuan02}
&&\int_0^t \left\| \nabla e^{(t-\tau)\Delta}(v \cdot \nabla u) \right\|_{L^2} d\tau \nonumber \\
&& \le  \int_0^t \left\|\nabla e^{\frac{(t-\tau)\Delta}{2}} \right\|_{L^2 \to L^2} \left\| e^{\frac{(t-\tau)\Delta}{2}} \left(v(\tau) \cdot \nabla u(\tau) \right) \right\|_{L^2} d\tau \nonumber\\
&& \lesssim  \int_0^t (t-\tau)^{-\frac{1}{2}} (t-\tau)^{-\frac{N(p-1)}{4}} \|v(\tau) \cdot \nabla u(\tau)\|_{L^\frac{2}{p}} d\tau  \nonumber\\
&& \lesssim  \int_0^t (t-\tau)^{-\frac{2+N(p-1)}{4}} \|v(\tau)\|_{L^{\frac{2}{p-1}}} \|\nabla u(\tau) \|_{L^2} d\tau \nonumber \\
&& \le \|v\|_{L^\infty\left(\R_+; L^{\frac{2}{p-1}} \right)} \cdot \int_0^t (t-\tau)^{-\frac{2+N(p-1)}{4}} \tau^{-\frac{1}{2}} \cdot \left(\tau^{\frac{1}{2}} \|\nabla u(\tau) \|_{L^2} \right) d\tau \nonumber \\
&& \le \|v\|_{L^\infty\left(\R_+; L^{\frac{2}{p-1}} \right)} \cdot \|u\|_{X_T} \cdot \int_0^t (t-\tau)^{-\frac{2+N(p-1)}{4}} \tau^{-\frac{1}{2}} d\tau \nonumber \\
&& \lesssim t^{-\frac{N(p-1)}{4}} \cdot \|v\|_{L^\infty\left(\R_+; L^{\frac{2}{p-1}} \right)} \cdot \|u\|_{X_T}. 
\end{eqnarray}
Then desired estimate \eqref{eq200} follows from \eqref{xuanku}, \eqref{xuan01}, \eqref{xuan02} and the identities
$$
\frac{2-N(p-1)}{4}+\frac{1}{2}=1-\frac{N(p-1)}{4}
$$
and
$$
-\frac{N(p-1)}{4}+\frac{1}{2}=\frac{2-N(p-1)}{4}.
$$
\end{proof}

Combining both Lemma \ref{lem012} and Lemma \ref{lem02}, we have the following result. 
\begin{lem} \label{lem101}
For any $0<T \le 1$. The map $\calN: X_T \to X_T$ and there exists some constant $C=C(N, p)>0$, such that
$$
\|\calN(u)\|_{X_T} \le  C \left(\|u_0\|_{L^2}+T^{1-\frac{N(p-1)}{4}} \|u\|_{X_T}^p+  T^{\frac{2-N(p-1)}{4}} \|v\|_{L^\infty\left(\R_+; L^\frac{2}{p-1}\right)} \|u\|_{X_T}  \right). 
$$
\end{lem}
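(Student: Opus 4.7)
The plan is to assemble Lemma \ref{lem101} as an essentially immediate consequence of Lemma \ref{lem012} and Lemma \ref{lem02}. Recall that the norm on $X_T$ is defined as
$$
\|u\|_{X_T}=\max\left(\sup_{0 \le t \le T}\|u\|_{L^2},\ \sup_{0<t\le T} t^{1/2}\|\nabla u\|_{L^2}\right),
$$
so a bound on $\|\calN(u)\|_{X_T}$ reduces to bounding the two quantities $\sup_{0 \le t \le T}\|\calN(u)(t)\|_{L^2}$ and $\sup_{0<t\le T} t^{1/2}\|\nabla \calN(u)(t)\|_{L^2}$ separately, and these have already been produced in Lemmas \ref{lem012} and \ref{lem02}.

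The first step is to observe that the bounds \eqref{eq100} and \eqref{eq200} are valid for every $t\in(0,T]$, with right-hand sides that are monotone non-decreasing in $t$. This monotonicity follows from the two positivity facts $1-\tfrac{N(p-1)}{4}>\tfrac12>0$ and $\tfrac{2-N(p-1)}{4}>0$, both guaranteed by the standing hypothesis $1\le p<1+\tfrac{2}{N}$. Therefore, taking the supremum over $t\in(0,T]$ in each of \eqref{eq100} and \eqref{eq200}, we may simply replace the variable $t$ on the right by the endpoint $T$, which already produces the desired form
$$
C\left(\|u_0\|_{L^2}+T^{1-\frac{N(p-1)}{4}}\|u\|_{X_T}^{p}+T^{\frac{2-N(p-1)}{4}}\|v\|_{L^\infty(\R_+;L^{2/(p-1)})}\|u\|_{X_T}\right)
$$
with $C=\max(C_1,C_2)$. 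Combining the two suprema and invoking the definition of $\|\cdot\|_{X_T}$ gives the claimed inequality.

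It remains to check that $\calN$ genuinely maps $X_T$ into $X_T$. Continuity of $t\mapsto \calN(u)(t)$ in $L^2$ on $[0,T]$ was already asserted at the start of Lemma \ref{lem012} (a standard consequence of the strong continuity of $e^{t\Delta}$ on $L^2$ and the Bochner integrability of the forcing and drift terms, whose $L^2$ norms are shown to be locally integrable in $\tau$ by the same estimates used in the proof of that lemma). Finiteness of $\sup_{0<t\le T}t^{1/2}\|\nabla \calN(u)(t)\|_{L^2}$ follows directly from \eqref{eq200} since its right-hand side is finite for $u\in X_T$ and $v\in L^\infty(\R_+;L^{2/(p-1)})$. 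Together these yield $\calN(u)\in X_T$, completing the proof.

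There is no real obstacle here: the work was done in Lemmas \ref{lem012} and \ref{lem02}. The only point that requires a line of comment is the monotonicity of the upper bound in $t$, which is exactly where the Fujita-type restriction $p<1+\tfrac{2}{N}$ is used to ensure both exponents of $T$ are positive.
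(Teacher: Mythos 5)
Your proposal is correct and matches the paper's approach exactly: the paper simply states that Lemma \ref{lem101} follows by combining Lemma \ref{lem012} and Lemma \ref{lem02}, and your argument spells out that combination, including the (routine but worth noting) observation that the powers of $t$ appearing in \eqref{eq100} and \eqref{eq200} are positive under the hypothesis $p<1+\tfrac{2}{N}$, so the supremum over $t\in(0,T]$ is attained by replacing $t$ with $T$.
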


\medskip

Our next result shows that the map $\calN$ is Lipschitz on $X_T$. 

\begin{lem} \label{lem102}
Let $0<T \le 1$. Then exists a constant $\widetilde{C}=\widetilde{C}(N, p)>0$ such that for $u_1, u_2 \in X_T$,
\begin{eqnarray*} 
&&\left\|\calN(u_1)-\calN(u_2) \right\|_{X_T}  \nonumber \\ 
&& \quad \le \widetilde{C} \left[T^{1-\frac{N(p-1)}{4}} \left(\|u_1\|^{p-1}_{X_T}+\|u_2\|^{p-1}_{X_T}\right)+ T^{\frac{2-N(p-1)}{4}}   \|v\|_{L^\infty\left(\R_+; L^{\frac{2}{p-1}}\right)}\right]\|u_1-u_2\|_{X_T}.
\end{eqnarray*}
\end{lem}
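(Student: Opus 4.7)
The plan is to bound $\calN(u_1) - \calN(u_2)$ by mimicking the proofs of Lemma \ref{lem012} and Lemma \ref{lem02}, exploiting the crucial fact that the initial-data term $e^{t\Delta}u_0$ cancels in the difference. Writing
\begin{align*}
\calN(u_1)(t) - \calN(u_2)(t) &= \int_0^t e^{(t-\tau)\Delta}\bigl(|u_1|^p - |u_2|^p\bigr)d\tau \\
&\quad - \int_0^t e^{(t-\tau)\Delta}\bigl(v \cdot \nabla(u_1 - u_2)\bigr)d\tau \\
&\quad - \int_0^t \int_{\T^N}\bigl(|u_1|^p - |u_2|^p\bigr)dxd\tau,
\end{align*}
I must control three pieces in both the $L^\infty_t L^2$ component and the weighted $t^{1/2}\|\nabla \cdot\|_{L^2}$ component of the $X_T$ norm.

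The one new ingredient beyond the previous two lemmas is the elementary pointwise inequality $\bigl||a|^p - |b|^p\bigr| \le p\bigl(|a|^{p-1} + |b|^{p-1}\bigr)|a - b|$, valid for $p \ge 1$. Combining this with H\"older's inequality at exponents $\frac{2}{p-1}$ and $2$ (note that $\frac{p-1}{2} + \frac{1}{2} = \frac{p}{2}$) yields the key estimate
$$
\bigl\||u_1|^p - |u_2|^p \bigr\|_{L^{2/p}} \lesssim \bigl(\|u_1\|_{L^2}^{p-1} + \|u_2\|_{L^2}^{p-1}\bigr)\|u_1 - u_2\|_{L^2},
$$
and an analogous $L^1$ estimate for the spatial-average term. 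With this in hand, the contributions of $|u_1|^p - |u_2|^p$ to the $L^\infty_t L^2$ and weighted-gradient norms are handled exactly as in \eqref{xuanxuan} and \eqref{xuan01}, except that $\|u\|_{X_T}^p$ is replaced throughout by $\bigl(\|u_1\|_{X_T}^{p-1} + \|u_2\|_{X_T}^{p-1}\bigr)\|u_1 - u_2\|_{X_T}$. The pure time-integral term coming from the spatial average is bounded by $T\bigl(\|u_1\|_{X_T}^{p-1} + \|u_2\|_{X_T}^{p-1}\bigr)\|u_1 - u_2\|_{X_T}$, which is absorbed into the first contribution since $T \le T^{1 - N(p-1)/4}$ whenever $T \le 1$.

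For the advection piece, linearity in $u$ gives $v \cdot \nabla u_1 - v \cdot \nabla u_2 = v \cdot \nabla(u_1 - u_2)$, so the same H\"older split $\|v \cdot \nabla(u_1-u_2)\|_{L^{2/p}} \le \|v\|_{L^{2/(p-1)}}\|\nabla(u_1-u_2)\|_{L^2}$ used in the proofs of Lemmas \ref{lem012} and \ref{lem02} applies verbatim, with the weight $\tau^{1/2}$ absorbed into the $X_T$ norm of $u_1 - u_2$. This produces the contribution $T^{(2 - N(p-1))/4}\|v\|_{L^\infty(\R_+; L^{2/(p-1)})}\|u_1 - u_2\|_{X_T}$, and taking the maximum of the two parts of the $X_T$ norm yields the stated inequality with constant $\widetilde{C} = \widetilde{C}(N, p)$.

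The only real obstacle here is bookkeeping: checking that every H\"older pair and every semigroup split $e^{(t-\tau)\Delta} = e^{(t-\tau)\Delta/2}\circ e^{(t-\tau)\Delta/2}$ reproduces the same exponents that appeared in \eqref{xuanxuan}, \eqref{xuan01}, \eqref{xuan02}, and that the Beta-type integrals $\int_0^t(t-\tau)^\alpha\tau^\beta d\tau$ deliver the powers $T^{1 - N(p-1)/4}$ and $T^{(2-N(p-1))/4}$ cleanly. Since the restriction $1 \le p < 1 + 2/N$ was invoked precisely to guarantee $N(p-1)/4 < 1$ and $(2 - N(p-1))/4 > 0$, no new smallness condition will be needed at this step.
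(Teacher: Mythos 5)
Your proposal is correct and follows essentially the same route the paper takes: decompose $\calN(u_1)-\calN(u_2)$ into the nonlinear, average, and advection pieces, apply the elementary bound $\bigl||a|^p-|b|^p\bigr|\lesssim_p (|a|^{p-1}+|b|^{p-1})|a-b|$ together with the H\"older pairing $(\tfrac{2}{p-1},2)$ to reach $L^{2/p}$, and then rerun the semigroup estimates of Lemmas \ref{lem012} and \ref{lem02} with $\|u\|_{X_T}^p$ replaced by $(\|u_1\|_{X_T}^{p-1}+\|u_2\|_{X_T}^{p-1})\|u_1-u_2\|_{X_T}$. The paper's own proof is just a terse version of exactly this argument.
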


\begin{proof}
The proof of Lemma \ref{lem102} is similar to the one of Lemma \ref{lem101}. Here we only include the necessary modification and we would like to leave the details to the interested reader. Clearly, the desired estimate follows from the following two estimates
\begin{eqnarray*}
&&\left\|\calN(u_1)(t)-\calN(u_2)(t) \right\|_{L^2}\\ 
&&\quad \lesssim_{N, p} \left[t^{1-\frac{N(p-1)}{4}} \left(\|u_1\|^{p-1}_{X_T}+\|u_2\|^{p-1}_{X_T}\right)+t^{\frac{2-N(p-1)}{4}}   \|v\|_{L^\infty\left(\R_+; L^{\frac{2}{p-1}}\right)} \right]\|u_1-u_2\|_{X_T}
\end{eqnarray*}
and
\begin{eqnarray*}
&&\|\nabla \calN(u_1)(t)-\nabla \calN(u_2)(t)\|_{L^2}\\
&&\quad \lesssim_{N, p} \left[t^{\frac{2-N(p-1)}{4}} \left(\|u_1\|^{p-1}_{X_T}+\|u_2\|^{p-1}_{X_T}\right)+t^{\frac{-N(p-1)}{4}}   \|v\|_{L^\infty\left(\R_+; L^{\frac{2}{p-1}}\right)} \right]\|u_1-u_2\|_{X_T}.
\end{eqnarray*}

These estimates follows from the proof of Lemma \ref{lem012} and Lemma \ref{lem02}, respectively, and the elementary estimate
$$
\left|a^p-b^p\right| \lesssim_p (a+b)^{p-1}|a-b|, \quad a,b \ge 0. 
$$
\end{proof}

We are ready to prove Theorem \ref{mainthm01}. 

\begin{proof} [Proof of Theorem \ref{mainthm01}.]
Let $\mathfrak C:=\max\{C, \widetilde{C}\}$, where $C$ and $\widetilde{C}$ are defined in Lemma \ref{lem101} and Lemma \ref{lem102}, respectively. Let $M:=10\mathfrak C \|u_0\|_{L^2}$ and we let $B(0, M)$ be the ball in $X_T$ centered at origin with radius $M$. Further, let
$$
0<T \le \min \left\{1, \frac{1}{\left(10 \mathfrak CM^{p-1} \right)^{\frac{4}{4-N(p-1)}}+\left(10 \mathfrak C \|v\|_{L^\infty \left(\R_+; L^{\frac{2}{p-1}}\right)} \right)^{\frac{4}{2-N(p-1)}}}\right\}.
$$
It is then easy to check that
$$
\left\|\calN(u)\right\|_{X_T} \le M, \quad \forall u \in B(0, M)
$$
and
$$
\left\|\calN(u_1)-\calN(u_2) \right\|_{X_T} \le \frac{1}{2} \|u_1-u_2\|_{X_T}, \quad u_1, u_2 \in B(0, M). 
$$
Theorem \ref{mainthm01} then follows clearly from the Banach fixed point theorem. 
\end{proof}

\begin{cor} \label{cor01}
Under the assumption of Theorem \ref{mainthm01}, if $T^*$ is the maximal time of existence of the mild solution $u=\calN(u)$, then 
$$
\limsup_{t \to T_{-}^*} \|u(t)\|_{L^2}=\infty.
$$
Otherwise, $T^*=\infty$. 
\end{cor}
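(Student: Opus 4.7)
The plan is to argue by contradiction, exploiting the fact that the short-time existence interval supplied by Theorem~\ref{mainthm01} depends only on the norms $\|u_0\|_{L^2}$ and $\|v\|_{L^\infty(\R_+; L^{2/(p-1)})}$, not on the starting time. Suppose that $T^* < \infty$ but that $\limsup_{t \to T^*_-} \|u(t)\|_{L^2} = M_0 < \infty$. Then for every $\eta > 0$ one can pick $t_0 \in (T^* - \eta, T^*)$ with $\|u(t_0)\|_{L^2} \le M_0 + 1$.

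I would then apply Theorem~\ref{mainthm01} with initial data $u(t_0)$ and the translated advecting field $\widetilde{v}(s, x) := v(t_0 + s, x)$, which lies in $L^\infty(\R_+; L^{2/(p-1)})$ with norm bounded by that of $v$. This yields a mild solution $\widetilde{u}$ of the shifted problem on some interval $[0, T_1]$, where $T_1 > 0$ depends only on $M_0 + 1$ and $\|v\|_{L^\infty(\R_+; L^{2/(p-1)})}$, and in particular not on $t_0$. Choosing $\eta < T_1$ at the outset ensures $t_0 + T_1 > T^*$. The patched function
\begin{equation*}
U(t) := \begin{cases} u(t), & 0 \le t \le t_0, \\ \widetilde{u}(t - t_0), & t_0 \le t \le t_0 + T_1, \end{cases}
\end{equation*}
is then a candidate extension of $u$ across $T^*$. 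To see that $U$ is genuinely a mild solution on $[0, t_0 + T_1]$ in the sense of Definition~\ref{mildsol}, I would split each Duhamel integral over $[0, t_0 + s]$ into a piece over $[0, t_0]$ and a piece over $[t_0, t_0 + s]$ and invoke the semigroup identity $e^{s\Delta} e^{(t_0 - \tau)\Delta} = e^{(t_0 + s - \tau)\Delta}$. Uniqueness from Theorem~\ref{mainthm01}, applied on $[t_0, t_0 + (T^* - t_0)]$, forces $\widetilde{u}(\cdot - t_0)$ to agree with $u$ on $[t_0, T^*)$, so $U$ is a well-defined continuation. This contradicts the maximality of $T^*$, hence $\limsup_{t \to T^*_-} \|u(t)\|_{L^2} = \infty$ whenever $T^* < \infty$.

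The one delicate point is the bookkeeping of the time translation: since $v$ is prescribed once and for all on $\R_+$, the restarted Cauchy problem at time $t_0$ must be posed with the shift $\widetilde{v}$, and one has to verify that this shift does not enlarge the norm controlling $T_1$. This is what ensures that the newly produced existence window has a uniform length, independent of how close $t_0$ is taken to $T^*$, and is precisely the ingredient that allows the solution to be pushed past $T^*$. Everything else reduces to routine manipulations with the Duhamel formula and the semigroup property of $e^{t\Delta}$.
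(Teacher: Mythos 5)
Your argument is correct and follows essentially the same route as the paper's proof: fix a restart time near $T^*$ where the $L^2$ norm is controlled, invoke Theorem~\ref{mainthm01} to get a uniform-length existence window, and use uniqueness to glue the continuation past $T^*$, contradicting maximality. You are somewhat more explicit than the paper about two technical points that it glosses over — namely that the restarted Cauchy problem must be posed with the time-shifted velocity $\widetilde v(s,\cdot)=v(t_0+s,\cdot)$ (whose $L^\infty_t L^{2/(p-1)}_x$ norm is no larger than that of $v$, so the existence window is indeed uniform), and that the patched function $U$ is genuinely a mild solution, verified by splitting the Duhamel integral and using the semigroup law — but the overall structure is identical.
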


\begin{proof}
Let $T^*<\infty$ and $\limsup\limits_{t \to T_{-}^*} \|u(t)\|_{L^2}=A<\infty$. Therefore, we can find a $t_0<T^*$, such that for all $t'$, $t_0<t'<T^*$, one has $\|u(t')\|_{L^2} \le 2A$. Let
$$
0<\calT<\min \left\{1, \frac{1}{\left(5 \cdot 2^p \cdot \mathfrak CA^{p-1} \right)^{\frac{4}{4-N(p-1)}}+\left(10 \mathfrak C \|v\|_{L^\infty \left(\R_+; L^{\frac{2}{p-1}} \right)} \right)^{\frac{4}{2-N(p-1)}}}\right\}. 
$$
Finally, we fix $t' \in (t_0, T^*)$ satisfying $\calT>T^*-t'$ (see, e.g., Figure \ref{Figure1}). 

\bigskip

\begin{figure}[ht]
\begin{tikzpicture}[scale=4.5]
\draw (0, -0.02) node [below] {$0$}; 
\fill (0, 0) circle [radius=0.01]; 
\draw (0, 0) -- (2.3, 0); 
\draw (2, -0.02) node [below] {$T^*$}; 
\fill (2, 0) circle [radius=0.01]; 
\draw (0.5, -0.02) node [below] {$1$}; 
\fill (0.5, 0) circle [radius=0.01]; 
\draw (0.4, -0.02) node [below] {$\calT$}; 
\fill (0.4, 0) circle [radius=0.01]; 
\draw (1.7, -0.02) node [below] {$t_0$}; 
\fill (1.7, 0) circle [radius=0.01]; 
\draw (1.85, -0.02) node [below] {$t'$}; 
\fill (1.85, 0) circle [radius=0.01]; 
\draw (2.25, -0.02) node [below] {$t'+\calT$}; 
\fill (2.25, 0) circle [radius=0.01]; 
\end{tikzpicture}
\caption{$\calT, t_0, t', T^*$ and $t'+\calT$.}
\label{Figure1}
\end{figure}

\bigskip

Now by Theorem \ref{mainthm01} with initial data $u(t')$, there exists a mild solution $\widetilde{u}$ on $[t', t'+\calT]$. Moreover, the uniqueness of the mild solutions suggests that $u=\widetilde{u}$ on $[t', T^*)$, which suggests that the mild solution can be extended by surpassing $T^*$. This is a contradiction. 
\end{proof}

\begin{rem}
The assumption of Corollary \ref{cor01} is not trivial and we refer the reader Theorem \ref{L2blowup} for an example where the $L^2$ norm of the solution exists only in finite time.
\end{rem}

\medskip

Finally, we show that the mild solution $u=\calN(u)$ constructed on $[0, T)$ coincides the weak solution, under a slightly stronger assumption. 

\begin{thm} \label{mainthm04}
Let $v \in L^\infty \left(\R_+; L^{\infty} \right)$ and $u=\calN(u)$ be the mild solution on $[0, T]$ given in Theorem \ref{mainthm01} with the initial data $u_0 \in L^2(\T^N)$ with \begin{equation} \label{20210814eq01}
\int_{\T^N} u_0 dx=0. 
\end{equation} 
Then, $u=\calN(u)$ is a weak solution of \eqref{maineq}, and satisfies the energy identity for any $0 \le t<T$:
\begin{equation} \label{energyid}
\|u_0\|_{L^2}^2+2\int_0^t\int_{\T^N} |u|^pu dxdt=\|u(t)\|_{L^2}^2+2\int_0^t \int_{\T^N} \left|\nabla u \right|^2dxdt. 
\end{equation}
\end{thm}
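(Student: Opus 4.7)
The plan is to proceed in three stages: upgrade the regularity of the mild solution $u=\calN(u)$ enough to make classical computations legitimate, verify the weak formulation \eqref{weaksol} by distributionally differentiating the mild identity, and derive the energy identity \eqref{energyid} by pairing the equation with $u$ itself. The central ingredients are the smoothing of $e^{t\Delta}$ and the subcritical hypothesis $p<1+\frac{2}{N}$.

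For the regularity step, a short Plancherel computation gives $\int_0^T\|\nabla e^{t\Delta}u_0\|_{L^2}^2\,dt\le \tfrac{1}{2}\|u_0\|_{L^2}^2$, so the linear piece already belongs to $L^2([0,T];H^1)$. Setting $w=u-e^{t\Delta}u_0$, the function $w$ solves $\partial_t w-\Delta w=|u|^p-v\cdot\nabla u-\int_{\T^N}|u|^p$ with $w(0)=0$. Under the strengthened hypothesis $v\in L^\infty$, the identity $v\cdot\nabla u=\nabla\cdot(uv)$ (valid since $\nabla\cdot v=0$) combined with $u\in L^\infty_t L^2_x$ shows the right-hand side lies in $L^\infty([0,T];H^{-1})$; the standard energy estimate for the linear parabolic equation then yields $w\in L^2([0,T];H^1)$, and hence $u\in L^2([0,T];H^1(\T^N))$. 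Substituting back into the equation produces $\partial_t u\in L^2([0,T];H^{-1}(\T^N))$. Differentiating the mild identity $u=\calN(u)$ in $t$ distributionally reproduces $\partial_t u+v\cdot\nabla u-\Delta u=|u|^p-\int_{\T^N}|u|^p$ in $H^{-1}$ for a.e.\ $t$. Testing against $\varphi\in C_c^\infty([0,T)\times\T^N)$, integrating by parts once in $t$ (the boundary term $\int_{\T^N}u_0\varphi(0)\,dx$ arising from $u\in C([0,T];L^2)$) and once in $x$ on the Laplacian, produces exactly \eqref{weaksol}.

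For the energy identity, first integrate the equation in $x$: since $v$ is divergence free and $|\T^N|=1$, all spatial terms cancel and $\tfrac{d}{dt}\int_{\T^N}u\,dx=0$, so \eqref{20210814eq01} propagates to give $\int_{\T^N}u(t)\,dx=0$ for all $t\in[0,T]$. Now pair the equation with $u$ itself. The duality identity $\langle\partial_t u,u\rangle_{H^{-1},H^1}=\tfrac{1}{2}\tfrac{d}{dt}\|u\|_{L^2}^2$ (valid a.e.\ by the regularity established above), combined with $\int_{\T^N} u\,(v\cdot\nabla u)\,dx=\tfrac{1}{2}\int_{\T^N} v\cdot\nabla(u^2)\,dx=0$ from incompressibility, the integration by parts $-\int_{\T^N} u\,\Delta u\,dx=\|\nabla u\|_{L^2}^2$, and the vanishing $\bigl(\int_{\T^N} u\bigr)\bigl(\int_{\T^N}|u|^p\bigr)=0$ of the non-local contribution by the mean-zero property, yields $\tfrac{1}{2}\tfrac{d}{dt}\|u\|_{L^2}^2+\|\nabla u\|_{L^2}^2=\int_{\T^N}|u|^p u\,dx$. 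Integrating from $0$ to $t$ produces \eqref{energyid}.

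The main obstacle is verifying that the right-hand side of \eqref{energyid} is well defined, i.e., $u\in L^{p+1}([0,T]\times\T^N)$. This is precisely where the subcriticality $p<1+\frac{2}{N}$ enters decisively: by Gagliardo--Nirenberg interpolation at exponent $r=p+1$, one obtains a bound of the form $\int_0^T\|u\|_{L^{p+1}}^{p+1}\,d\tau\lesssim \|u\|_{L^\infty_tL^2_x}^{(p+1)(1-\theta)}\,T^{1-(p+1)\theta/2}\,\|\nabla u\|_{L^2_{t,x}}^{(p+1)\theta}$ with $(p+1)\theta=N(p-1)/2<1$, which is finite by the regularity step and so justifies every computation above.
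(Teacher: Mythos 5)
Your proposal is correct, and it takes a genuinely different route from the paper's. The paper's proof works on the truncated interval $[\epsilon, T]$ throughout Steps I--V, because the $X_T$ norm only yields $\|\nabla u(t)\|_{L^2}\lesssim t^{-1/2}$, which is not square-integrable near $t=0$ a priori; the paper then recovers $u\in L^2((0,T);H^1)$ \emph{a posteriori} from the energy identity on $[\epsilon,T]$ together with Gagliardo--Nirenberg and a dominated-convergence passage $\epsilon\to 0$. You instead obtain $u\in L^2([0,T];H^1)$ \emph{a priori}, up to $t=0$, by observing that the forcing $|u|^p-\nabla\cdot(uv)-\int_{\T^N}|u|^p$ lies in $L^2([0,T];H^{-1})$ under the given hypotheses ($u\in L^\infty_t L^2_x$ gives $|u|^p\in L^\infty_t L^{2/p}_x\hookrightarrow L^\infty_t H^{-1}_x$ precisely because $p\le 1+2/N$; $v\in L^\infty$ gives $uv\in L^\infty_t L^2_x$), and applying maximal regularity for the heat semigroup to $w=u-e^{t\Delta}u_0$. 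This eliminates the $\epsilon$-machinery and the final limiting argument entirely. What your approach buys is conceptual economy: once $u\in L^2_t H^1_x$ and $\partial_t u\in L^2_t H^{-1}_x$ are known on all of $[0,T]$, the weak formulation and the energy identity follow from the standard Lions--Magenes pairing $\langle\partial_t u,u\rangle=\tfrac12\tfrac{d}{dt}\|u\|_{L^2}^2$ with no further limiting. The cost is that you must justify the maximal regularity step itself; the cleanest way is a Plancherel/Young computation on the Duhamel formula $w(t)=\int_0^t e^{(t-\tau)\Delta}f(\tau)\,d\tau$ (the $k$-th mode kernel $|k|^2 e^{-|k|^2 s}$ has $L^1_s$ norm one uniformly in $k$), rather than the ``standard energy estimate'' you invoke, which would require a Galerkin-plus-uniqueness detour to identify the energy solution with the mild one. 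You also gloss over the verification that the mild solution actually satisfies the equation in $H^{-1}$ a.e.\ in $t$ (the paper's Step III); this is routine once the regularity is in place, but it should be spelled out. Both approaches use the same two structural facts: subcriticality $N(p-1)<2$ and mean-zero propagation (which kills the non-local term against $u$ and makes the homogeneous Gagliardo--Nirenberg inequality applicable).
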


\begin{rem}
Note that the assumption of Theorem \ref{mainthm04} can be relaxed. More precisely, one can actually prove Theorem \ref{mainthm04} without assuming \eqref{20210814eq01} and under less regularity assumption: $v \in L^\infty \left(\R_+; L^q \right)$ with $q>\max\left\{N+2, \frac{2}{p-1} \right\}$ (see, Theorem \ref{20210814thm01}). Nevertheless, the current Theorem \ref{mainthm04} has already contained the most interesting cases and suffices for our applications (see, Sections \ref{Sec3} and \ref{Sec4}). 
\end{rem}

\begin{proof}
We divide the proof into several steps.

\medskip

\textit{Step I: An alternative representation of $u=\calN(u)$.} 

\medskip

We claim that for any $0<\epsilon \le t \le T$, one has
\begin{equation} \label{20210426eq01}
u(t)=e^{\left(t-\epsilon \right)\Delta}u(\epsilon)+\int_\epsilon^t e^{(t-\tau)\Delta} \left(|u|^p-v \cdot \nabla u \right)d\tau-\int_\epsilon^t \int_{\T^N} |u|^p dx d\tau,
\end{equation} 
where the above identity is taken in the sense of functions in $C\left(\left[\epsilon, T \right]; L^2(\T^N) \right)$. Indeed, by Definition \ref{mildsol}, we have
\begin{eqnarray} \label{20210426eq02}
u(t)-u(\epsilon)%
&=& \left(e^{(t-\epsilon)\Delta}-I \right) \left[e^{\epsilon \Delta}u_0+\int_0^\epsilon e^{(\epsilon-\tau)\Delta} \left(|u|^p-v \cdot \nabla u \right) d\tau \right] \nonumber \\
&&\quad \quad +\int_\epsilon^t e^{(t-\tau)\Delta} \left(|u|^p-v \cdot \nabla u \right)d\tau-\int_\epsilon^t \int_{\T^N} |u|^pdxd\tau.
\end{eqnarray}
Note that
$$
\left(e^{(t-\epsilon)\Delta}-I\right) \left[ \int_0^\epsilon \int_{\T^N} |u|^pdxd\tau \right]=0.
$$
This together \eqref{20210426eq02} gives the desired claim \eqref{20210426eq01}. 

\medskip

\textit{Step II: For each $0<\epsilon \le t$, $u \in L^2 \left(\left[\epsilon, t\right]; H^1(\T^N) \right)$.}

\medskip

Indeed, using the fact that $u \in X_T$, we have for any $\epsilon \le \tau \le t \le T$, 
$$
\|u(\tau)\|_{H^1(\T^N)} \lesssim \tau^{-1/2} \cdot \|u\|_{X_T},
$$
this gives
$$
\int_\epsilon^t \|u(\tau)\|_{H^1(\T^N)}^2 d\tau \lesssim \left(\log T -\log \epsilon \right) \cdot \|u\|_{X_T}^2.
$$ 
Note that although the above bound depends on $\epsilon$, we will finally show that it can be bounded above uniformly in $\epsilon>0$, and therefore we are able to apply the dominated convergence theorem to pass $\epsilon$ to $0$. We will come back to this point later.

\medskip

\textit{Step III: $L^2$ pairing.}

\medskip

Let $\varphi \in C^\infty \left(\left[\epsilon, T \right] \times \T^N \right)$ and consider the $L^2$ pairing of $\varphi$ with $u$ by using \eqref{20210426eq01}. This gives
\begin{eqnarray*}
 \int_{\T^N} \varphi(t)u(t)dx%
&=&\int_{\T^N} e^{(t-\epsilon)\Delta}\varphi(t)u(\epsilon)dx \\
&& +\int_\epsilon^t \int_{\T^N} e^{(t-\tau)\Delta}\varphi(t) \left(|u(\tau)|^p-v(\tau) \cdot \nabla u(\tau) \right)dxd\tau \\
&&- \int_{\T^N} \varphi(t)dx \cdot \int_\epsilon^t \int_{\T^N} |u(\tau)|^p dxd\tau, 
\end{eqnarray*}
where we have used Fubini-Tonelli's Theorem to exchange the order of the order of integration. Now using the fact that $e^{t\Delta}$ is a strongly continuous semigroup on $L^2(\T^N)$ and standard spectral theory, we have
\begin{eqnarray*}
&&\frac{d}{dt} \int_{\T^N} \varphi(t)u(t)dx = \int_{\T^N} \left(e^{(t-\epsilon)\Delta}\partial_t \varphi(t)+\Delta e^{(t-\epsilon)\Delta}\varphi(t) \right) u(\epsilon)dx \\
&& \quad \quad +\int_{\T^N}\int_\epsilon^t \left(e^{(t-\epsilon)\Delta}\partial_t \varphi(t)+\Delta e^{(t-\epsilon)\Delta}\varphi(t) \right) \left(|u(\tau)|^p-v(\tau) \cdot \nabla u(\tau) \right)dxd\tau \\
&& \quad \quad +\int_{\T^N} \varphi(t) \left(|u(t)|^p-v(t) \cdot \nabla u(t) \right)dx-\int_{\T^N} |u(t)|^pdx \cdot \int_{\T^N} \varphi(t)dx \\
&& \quad \quad  -\int_{\T^N} \partial_t \varphi(t)dx \cdot \int_\epsilon^t \int_{\T^N} |u(\tau)|^pdxd\tau.
\end{eqnarray*}
Using the self-adjointness of $e^{t\Delta}$ and the fact that $\int_{\T^N} \Delta \varphi(t)dx=0$, 
we find that
\begin{eqnarray*}
&&\frac{d}{dt} \int_{\T^N} \varphi(t)u(t)dx \\
&&=\int_{\T^N} \partial_t \varphi(t) \bigg[e^{(t-\epsilon)\Delta} u(\epsilon)+\int_\epsilon^t e^{(t-\epsilon)\Delta} \left(|u(\tau)|^p-v(\tau) \cdot \nabla u(\tau) \right) d\tau \\
&& \quad \quad \quad \quad \quad \quad \quad \quad \quad  \quad \quad  \quad \quad \quad \quad-\int_\epsilon^t\int_{\T^N} |u(\tau)|^pdxd\tau \bigg] dx \\
&& \ \quad  +\int_{\T^N} \Delta \varphi(t) \bigg[e^{(t-\epsilon)\Delta} u(\epsilon)+\int_\epsilon^t e^{(t-\epsilon)\Delta} \left(|u(\tau)|^p-v(\tau) \cdot \nabla u(\tau) \right) d\tau  \\
&& \quad \quad \quad \quad \quad \quad  \quad \quad \quad \quad \quad \quad \quad \quad \quad-\int_\epsilon^t\int_{\T^N} |u(\tau)|^pdxd\tau \bigg] dx \\
&& \ \quad  +\int_{\T^N} \varphi(t) \left(|u(t)|^p-v(t) \cdot \nabla u(t) \right)dx-\int_{\T^N} |u(t)|^pdx \cdot \int_{\T^N} \varphi(t)dx.
\end{eqnarray*}
Using \eqref{20210426eq01}, we see that the above term gives
\begin{eqnarray*}
&&=\int_{\T^N} \left(\partial_t \varphi(t)+\Delta \varphi(t) \right) u(t) dx+\int_{\T^N} \varphi(t) \left(|u(t)|^p-v(t) \cdot \nabla u(t) \right)dx \\
&& \ \quad -\int_{\T^N} |u(t)|^pdx \cdot \int_{\T^N} \varphi(t)dx.
\end{eqnarray*}
To this end, we integrate the above expression over $(\epsilon, t)$ for any $\epsilon<t<T$:
\begin{eqnarray} \label{20210427eq01}
&&\int_{\T^N} \varphi(t)u(t)dx-\int_{\T^N} \varphi(\epsilon) u(\epsilon)dx=\int_\epsilon^t \int_{\T^N} \partial_t \varphi(\tau)u(\tau)dxd\tau \nonumber \\
&& \quad \quad -\int_\epsilon^t \int_{\T^N} \nabla \varphi(\tau) \cdot \nabla u(\tau) dxd\tau+\int_\epsilon^t \int_{\T^N} \varphi(\tau) \left(|u(\tau)|^p-v(\tau) \cdot \nabla u(\tau) \right)dxd\tau \nonumber \\
&& \quad \quad -\int_\epsilon^t \left(\int_{\T^N} |u(\tau)|^p dx \right) \cdot \left( \int_{\T^N} \varphi(\tau) dx \right)d \tau
\end{eqnarray}

\medskip

\textit{Step IV: Regularity of $\partial_t u$.}

\medskip

We claim that $\partial_t u \in L^2 \left(\left[\epsilon, t \right], H^{-1} \right)$. Note that it suffices to show that all the terms in \eqref{20210427eq01} are well defined for $\varphi \in L^2\left(\left[\epsilon, t \right], H^1 \right)$ with $\partial_t \varphi \in L^2\left(\left[\epsilon, t \right], H^{-1} \right)$. The key observation here is to show that the term is well-defined
\begin{eqnarray} \label{20210428eq01}
&&\int_\epsilon^t \int_{\T^N} \varphi(\tau) \left(|u(\tau)|^p-v(\tau) \cdot \nabla u(\tau) \right)dxd\tau \nonumber \\
&& \quad \quad \quad =\int_\epsilon^t \int_{\T^N} \varphi(\tau)|u(\tau)|^pdxd\tau-\int_\epsilon^t \int_{\T^N} \varphi(\tau) v(\tau) \cdot \nabla u(\tau) dxd\tau
\end{eqnarray}
is well-defined under such an assumption. To control the second term, we simply use H\"older: 
$$
\left|\int_\epsilon^t \int_{\T^N} \varphi(\tau) v(\tau) \cdot \nabla u(\tau) dx d\tau \right| \le  \|v\|_{L^{\infty}\left(\R_+; L^{\infty} \right)} \|\varphi\|_{L^2\left(\left[\epsilon, t\right] \times \T^N  \right)} \|\nabla u\|_{L^2\left(\left[\epsilon, t\right] \times \T^N  \right)}.
$$
Next for the first term in \eqref{20210428eq01}, by Cauchy's inequality and Gagliardo-Nirenberg's inequality, we have
\begin{eqnarray} \label{20210428eq10} 
&&\left|\int_\epsilon^t \int_{\T^N} \varphi(\tau) |u(\tau)|^p dxd\tau \right| \le  \|\varphi\|_{L^2\left(\left[\epsilon, t \right] \times \T^N \right)} \cdot \left( \int_\epsilon^t \|u(\tau)\|_{L^{2p}(\T^N)}^{2p} d\tau \right)^{\frac{1}{2}} \nonumber \\
&& \quad \quad \quad \lesssim  \|\varphi\|_{L^2\left(\left[\epsilon, t \right] \times \T^N \right)} \cdot \left[\int_\epsilon^t \|\nabla u(\tau)\|_{L^2(\T^N)}^{N(p-1)} \cdot \|u(\tau)\|_{L^2(\T^N)}^{2p-N(p-1)} d\tau \right]^{\frac{1}{2}} \nonumber \\
&& \quad \quad \quad \lesssim \|\varphi\|_{L^2\left(\left[\epsilon, t \right] \times \T^N \right)} \cdot \|u\|_{X_T}^{p-\frac{N(p-1)}{2}} \cdot  \left[\int_\epsilon^t \|\nabla u(\tau)\|_{L^2(\T^N)}^{N(p-1)} d\tau \right]^{\frac{1}{2}} \nonumber \\
&& \quad \quad \quad \lesssim (t-\epsilon)^{\frac{2-N(p-1)}{4}} \|\varphi\|_{L^2\left(\left[\epsilon, t \right] \times \T^N \right)} \cdot \|u\|_{X_T}^{p-\frac{N(p-1)}{2}} \cdot \|u\|_{L^2\left(\left[\epsilon, t\right]; H^1\right)}^{\frac{N(p-1)}{2}}, \end{eqnarray}
where in the above estimate, we have used the fact that $u \in X_T$, $u \in L^2\left(\left[\epsilon, t \right], H^1 \right)$ (from Step II) and 
$N(p-1)<2$.

\medskip

\textit{Step V: Approximation and energy identity over $[\epsilon, T]$.}

\medskip

Since $u \in L^2 \left(\left[ \epsilon, t \right]; H^1 \right)$ with $\partial_t u \in L^2\left(\left[\epsilon, t\right]; H^{-1} \right)$, there exists a sequence $\{\varphi_n\} \subset C^1 \left(\left[ \epsilon, T \right]; H^1 \right)$, such that $\varphi_n \to u$ strongly in $C\left(\left[ \epsilon, T \right]; L^2 \right) \cap L^2 \left( \left[\epsilon, T \right]; H^1 \right)$ as $n \to \infty$ and such that $\partial_t \varphi_n \to \partial_t  u$ weakly in $L^2 \left( \left[\epsilon, T \right]; H^{-1} \right)$. Note that by H\"older's inequality and the proof of \eqref{20210428eq10}, we have
\begin{eqnarray*}
&& \left| \int_{\epsilon}^T \int_{\T^N} \left(u(\tau)-\varphi_n(\tau)\right) |u(\tau)|^p dxd\tau \right| \\
&& \quad \quad \quad \le \left\| u-\varphi_n \right\|_{L^2 \left(\left[\epsilon, T \right] \times \T^N \right)} \cdot \left( \int_\epsilon^t \|u(\tau)\|_{L^{2p}(\T^N)}^{2p} d\tau \right)^{\frac{1}{2}} \\
&& \quad \quad \quad \lesssim  T^{\frac{2-N(p-1)}{4}} \left\| u-\varphi_n \right\|_{L^2 \left(\left[\epsilon, T \right] \times \T^N \right)}  \cdot \|u\|_{X_T}^{p-\frac{N(p-1)}{2}} \cdot \|u\|_{L^2\left(\left[\epsilon, t\right]; H^1\right)}^{\frac{N(p-1)}{2}},
\end{eqnarray*}
which clearly converges to zero as $n \to \infty$. While for the term 
$$
\left| \int_\epsilon^t \int_{\T^N} \left(\varphi_n-u(\tau) \right) v(\tau) \cdot \nabla u(\tau) dx d\tau \right|, 
$$
one can use H\"older's inequality again to see it can be controlled by 
$$
\|v\|_{L^{\infty}\left(\R_+; L^{\infty} \right)} \|\varphi_n-u\|_{L^2\left(\left[\epsilon, t\right] \times \T^N  \right)} \|\nabla u\|_{L^2\left(\left[\epsilon, t\right] \times \T^N  \right)}, 
$$
which again converges to zero as $n \to \infty$. 

\medskip

To this end, we take $\varphi_n$ as test functions in \eqref{20210427eq01}, and then pass to the limit $n \to \infty$. This gives
\begin{eqnarray} \label{20210428eq05}
&&\|u(t)\|_{L^2}^2-\|u(\epsilon)\|_{L^2}^2=\int_\epsilon^t \int_{\T^N} \partial_t u(\tau) u(\tau) dxd\tau \nonumber \\
&& -\int_\epsilon^t \|\nabla u(\tau)\|_{L^2}^2 d\tau+\int_\epsilon^t \int_{\T^N} u(\tau) \left(|u(\tau)|^p-v(\tau) \cdot \nabla u(\tau) \right)dxd\tau, 
\end{eqnarray}
where we have used the fact that $\int_{\T^N} u(\tau) dx=0$ for any $\tau \in [\epsilon, t]$. By the fundamental theorem of Calculus, 
$$
\int_\epsilon^t \int_{\T^N} \partial_t u(\tau) u(\tau) dxd\tau=\frac{\|u(t)\|_{L^2}^2-\|u(\epsilon)\|_{L^2}^2}{2}, \quad \epsilon<t<T.  
$$
Therefore, \eqref{20210428eq05} gives for any $0<\epsilon \le t \le T$, 
\begin{equation} \label{20210429eq01}
\|u(t)\|_{L^2}^2-\|u(\epsilon)\|_{L^2}^2=-2\int_\epsilon^t \|\nabla u(\tau)\|_{L^2}^2d \tau+2\int_\epsilon^t \int_{\T^N} u(\tau) |u(\tau)|^p dxd\tau. 
\end{equation}

\medskip

\textit{Step VI: Passing $\epsilon \to 0$.}

\medskip

From \eqref{20210429eq01}, we see that
\begin{equation} \label{20210429eq02}
\int_\epsilon^t \|\nabla u(\tau)\|_{L^2}^2 d\tau \lesssim  \|u(\epsilon)\|_{L^2}^2-\|u(t)\|_{L^2}^2+\int_\epsilon^t \int_{\T^N} |u(\tau)|^{p+1} dxd\tau.
\end{equation} 
Note that by Gagliardo-Nirenberg's inequality, we have for any $0<\epsilon \le t \le T$, 
\begin{eqnarray*}
&&\int_\epsilon^t \int_{\T^N} |u(\tau)|^{p+1} dxdt\tau=\int_\epsilon^t \left\|u(\tau)\right\|_{p+1}^{p+1} d\tau \\
&& \quad \quad \quad \quad \le   \int_\epsilon^t \left\|\nabla u(\tau) \right\|_{L^2}^{\frac{N(p-1)}{2}} \left\|u\right\|_{L^2}^{p+1-\frac{N(p-1)}{2}} d\tau \\
&& \quad \quad \quad \quad \le  \|u\|_{X_T}^{p+1-\frac{N(p-1)}{2}}  \cdot \int_\epsilon^t \tau^{-\frac{N(p-1)}{4}} \cdot  \left(\tau^{\frac{1}{2}} \left\|\nabla u(\tau) \right\|_{L^2}\right)^{\frac{N(p-1)}{2}} d\tau  \\
&& \quad \quad \quad \quad \le C_{T, p, N} \|u\|_{X_T}^{p+1},
\end{eqnarray*}
for some absolute constant $C_{T, p, N}$ only depending on $T, p$ and $N$. Note that in the above estimate, we have used the fact that $0<\frac{N(p-1)}{4}<1$. 

Therefore, by Lebesgue's Dominated convergence theorem, we are able to let $\epsilon \to 0$ in \eqref{20210429eq01}, which proves the desired energy identity \eqref{energyid}. Here, in particular we are using the fact that $\|u(\epsilon)\|_{L^2} \to \|u(0)\|_{L^2}$, which can be easily verified via the definition of the mild solution. Similarly, since $u \in L^2\left((0, T), H^1 \right)$ (by \eqref{20210429eq02} and monotone convergence theorem), the right hand side of \eqref{20210427eq01} with $\varphi \in C_c^\infty\left(\left[0, T\right) \times \T^N \right)$ is uniformly bounded in $\epsilon$, so the left hand side is, which implies $(\partial_t u) \one_{[\epsilon, T)}$ converges weakly to $\partial_t u$ in $L^2((0, T); H^{-1})$. Therefore, by the Lebesgue's Dominated convergence theorem again, we can pass to the limit $\epsilon \to 0$ in \eqref{20210427eq01}, which gives the weak formulation \eqref{weaksol}. Hence $u$ is a weak solution on $[0, T]$. 
\end{proof}

\bigskip
\section{Global existence with advecting drifts of small dissipation time} \label{Sec3}
As an application of our local existence and regularity results, in this section, we show that if the advecting velocity $v$ has small \emph{dissipation time} (see, Definition \ref{e:ADE}),  then the solution to~\eqref{maineq} converges to a homogeneous mixing state exponentially fast, which further guarantees the global existence of the mild (also weak) solution to the problem ~\eqref{maineq}. It turns out such an existence result can be handled by the general theory developed in \cite[Theorem 1.2]{IXZ21}, and we include the details here for the purpose of being self-contained. 

To begin with, we put stronger assumptions on the drift term: we assume the incompressible velocity $v$ is Lipschitz continuous in space uniformly in time, i.e. $v\in L^{\infty}\left(\left[0,\infty\right),W^{1,\infty}\left(\T^{N}\right)\right)$. For simplicity, we also assume $\bar{u}=0$, while for the case when $\bar{u} \neq 0$, one can modify the proof by considering $\widetilde{u}=u-\bar{u}$. 

Let $\calS_{s,t}, 0\le s\le t$, be the solution operator of the advection-diffusion equation
\begin{equation}\label{e:ADE}
\partial_t\theta + v\cdot\nabla\theta-\Delta\theta=0\, ,
\end{equation}
that is $\theta\left(t\right)=\calS_{s,t}\theta(s)$.

Next, we introduce the dissipation time of~\eqref{e:ADE}. Heuristically, it is the smallest time that the system halve the energy, $L^2$ norm, of the solution. The formal definition is given below:
\begin{defn}
\label{def:dissipationTime}
The \emph{dissipation time} associated to the solution operator $\calS_{s, t}$, $0\le s\le t$ is defined by
  \begin{equation}
  \label{e:dtimeDef}
    \tau_*(v):=\inf \left\{ t \geq 0 \Big\vert  \| \calS_{s, s+t} \|_{ L_0^2 \to  L_0^2 } \le \tfrac{1}{2} \text{ for all $s\geq 0$} \right\}\,.
  \end{equation}

\end{defn}
We now describe the work in \cite{IXZ21}. Consider the following nonlinear advection-diffusion equation: 
\begin{equation}\label{e:ADNE}
\partial_t\theta + v\cdot\nabla\theta-\Delta\theta=\calN\left(\theta\right)
\end{equation}
on $\T^{N}$ and with initial data $\theta_0\in L_0^2(\T^N)$, where $L_0^2(\T^N)$ denotes the collection of all $L^2$ integrable functions on $\T^2$ with mean $0$. The velocity $v$ in the advection term is a Lipschitz divergence-free vector field with bounded Lipschitz constant. The non-linear operator $\calN:H^{1}\left(\T^N\right)\rightarrow L_0^2\left(\T^N\right)$ is measurable, and its target space being $L_0^2$ implies the solution to~\eqref{e:ADNE} remain mean-zero. The authors proved following main theorem:
\begin{thm}\label{thm:IXZthm}
Assume that $\calN$ satisfies the following hypotheses $(H1)-(H2)$:
\begin{enumerate}
    \item [(H1)] There exists $\epsilon_0\in\left(0,1\right]$ and an increasing continuous function $F:\left[0,\infty\right)\rightarrow\left[0,\infty\right)$ such that for every $\varphi\in H^1(\T^N)$ we have
    \begin{equation*}
        \Big\vert\int_{\T^d}\,\varphi\calN\left(\varphi\right)\,dx\Big\vert
        \le (1-\epsilon_0)\Vert\nabla\varphi\Vert_{L^2}^2 + F(\Vert\varphi\Vert_{L^2})\, .
    \end{equation*}
    
    \medskip
    
    \item [(H2)] There exists $C_0<\infty$ and an increasing continuous function $G:\left[0,\infty\right)\rightarrow\left[0,\infty\right)$ such that for every $\varphi\in H^1(\T^N)$ we have
    \begin{equation*}
        \Vert\calN(\varphi)\Vert_{L^2}\le C_0\Vert\nabla\varphi\Vert_{L^2}^2 + G(\Vert\varphi\Vert_{L^2})\, .
    \end{equation*}
\end{enumerate}
and 
\begin{equation}
    \theta\in L_{\text{loc}}^2\left(\left(0,T\right),H^1\left(\T^N\right)\right)\cap C\left(\left[0,T\right),L_0^2\left(\T^N\right)\right)
\end{equation}
is a weak solution to~\eqref{e:ADNE} with $v\in L^{\infty}\left(\left[0,\infty\right),W^{1,\infty}\left(\T^{N}\right)\right)$ a divergence free vector field. Then there is $\tau_0=\tau_0\left(\Vert\theta_0\Vert_{L^2},\calN\right)$ such that if $\tau_{*}(v)\le \tau_0$, one has
\begin{equation*}
    \sup_{t\in[0,T)}\Vert\theta(t)\Vert_{L^2}
    \le 2\Vert\theta_0\Vert_{L^2}+1\,.
\end{equation*}
Moreover, if the functions $F$ and $G$ from hypotheses $(H1)$ and $(H2)$ respectively, satisfy
\begin{equation}\label{e:FGcond}
    \limsup_{y\rightarrow0^{+}}\frac{\sqrt{F(y)}+G(y)}{y}<\infty
\end{equation}
and $T=\infty$, then $\Vert\theta(t)\Vert_{L^2}\rightarrow 0$ exponentially as $t\rightarrow\infty$.
\end{thm}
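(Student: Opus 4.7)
The plan is to run a bootstrap / iteration argument on consecutive time windows of length $\tau_\ast := \tau_\ast(v)$, combining the hypothesized $L^2$ contraction of the linear propagator $\mathcal{S}_{s,s+\tau_\ast}$ with the two nonlinearity bounds (H1) and (H2). First I would fix $s\ge 0$ and write Duhamel's formula on $[s,s+\tau_\ast]$:
\begin{equation*}
\theta(s+\tau_\ast) = \mathcal{S}_{s,s+\tau_\ast}\theta(s) + \int_s^{s+\tau_\ast}\mathcal{S}_{\tau,s+\tau_\ast}\mathcal{N}(\theta(\tau))\,d\tau.
\end{equation*}
Since $\theta$ is mean-zero throughout (the target space of $\mathcal{N}$ is $L_0^2$) and the advection-diffusion semigroup is an $L^2$-contraction at intermediate times, Definition~\ref{def:dissipationTime} yields
\begin{equation*}
\|\theta(s+\tau_\ast)\|_{L^2} \le \tfrac{1}{2}\|\theta(s)\|_{L^2} + \int_s^{s+\tau_\ast}\|\mathcal{N}(\theta(\tau))\|_{L^2}\,d\tau.
\end{equation*}

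Next I would control the forcing integral via the $L^2$ energy identity for \eqref{e:ADNE}. Inserting (H1) yields $\tfrac{d}{dt}\|\theta\|_{L^2}^2 \le -2\epsilon_0\|\nabla\theta\|_{L^2}^2 + 2F(\|\theta\|_{L^2})$. Integrating over $[s,s+\tau_\ast]$ produces simultaneously a sup-in-time bound $R_s^2 := \sup_{[s,s+\tau_\ast]}\|\theta\|_{L^2}^2 \le \|\theta(s)\|_{L^2}^2 + 2\tau_\ast F(R_s)$ and an enstrophy bound $\int_s^{s+\tau_\ast}\|\nabla\theta\|_{L^2}^2\,d\tau \le \tfrac{1}{2\epsilon_0}\|\theta(s)\|_{L^2}^2 + \tfrac{\tau_\ast}{\epsilon_0}F(R_s)$. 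Feeding the latter into (H2) collapses everything into a closed one-step recursion for the samples $a_n := \|\theta(n\tau_\ast)\|_{L^2}$, schematically
\begin{equation*}
a_{n+1} \le \tfrac{1}{2}a_n + \tfrac{C_0}{2\epsilon_0}\,a_n^2 + \tau_\ast\,\Phi(R_{n\tau_\ast}),
\end{equation*}
with $\Phi$ built from $C_0,\epsilon_0,F,G$.

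With the recursion in hand I would run a continuity bootstrap. Setting $M := 2\|\theta_0\|_{L^2}+1$, assume for contradiction that $T_M\in (0,T]$ is the first time $\|\theta(T_M)\|_{L^2}=M$; on $[0,T_M]$ we then have $R_s\le M$, and choosing $\tau_0 = \tau_0(\|\theta_0\|_{L^2},\mathcal{N})$ sufficiently small (quantitatively depending on $M$, $F(M)$, $G(M)$, $C_0/\epsilon_0$) makes the recursion strictly improve the bound at each sampling step, contradicting the definition of $T_M$. For the exponential-decay conclusion under \eqref{e:FGcond}, the same recursion becomes favorable in the small-data regime: \eqref{e:FGcond} forces $F(y) \lesssim y^2$ and $G(y)\lesssim y$ as $y\to 0^+$, so for small $a_n$ the remainder is $o(a_n)+O(\tau_\ast a_n)$ and the recursion reduces to $a_{n+1}\le (\tfrac{1}{2}+\delta)a_n$ with $\delta<\tfrac{1}{2}$; geometric decay of $\{a_n\}$ combined with $\|\theta(t)\|_{L^2}\le R_{n\tau_\ast}$ on each window then gives exponential decay of $\|\theta(\cdot)\|_{L^2}$ in continuous time.

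The main technical obstacle is the quadratic-in-$a_n$ term $\tfrac{C_0}{2\epsilon_0}\,a_n^2$ in the recursion, which arises from bounding $C_0\int\|\nabla\theta\|_{L^2}^2$ through the energy identity and carries no $\tau_\ast$ smallness: closing the bootstrap naively inside $\{\|\theta\|_{L^2}\le M\}$ would require $C_0 M/\epsilon_0$ to be small, an extraneous restriction on $\|\theta_0\|_{L^2}$. To avoid this one works on longer windows $[s,s+k\tau_\ast]$, on which $\|\mathcal{S}_{s,s+k\tau_\ast}\|_{L_0^2\to L_0^2}\le 2^{-k}$ by the semigroup property; splitting the Duhamel integral into the $k$ sub-intervals $[s+j\tau_\ast,s+(j+1)\tau_\ast]$, inserting the corresponding per-window dissipation bounds and Abel-summing telescopes the quadratic contributions into a convergent geometric series uniformly bounded by $C M^2$, producing a recursion with only linear dependence on $a_n$ that closes the bootstrap for arbitrary $\|\theta_0\|_{L^2}$ and completes the proof.
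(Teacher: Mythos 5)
The paper states Theorem~\ref{thm:IXZthm} as a result imported from \cite[Theorem~1.2]{IXZ21} (``The authors proved following main theorem'') and does not reprove it, so there is no in-paper argument to compare against; what follows assesses your outline directly. Your scaffolding is the right one --- Duhamel over $\tau_*$-windows, the $L^2$ energy identity with (H1) to give a running sup bound and an enstrophy budget, (H2) for the Duhamel forcing, and a first-blow-up-time bootstrap --- and you correctly isolate the obstruction: the contribution $\tfrac{C_0}{2\epsilon_0}\|\theta(s)\|_{L^2}^2$ arising from feeding the enstrophy budget into (H2) carries no $\tau_*$-smallness.

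The gap is in how you dispose of it. On $[s,s+k\tau_*]$ set $b_j:=\|\theta(s+j\tau_*)\|_{L^2}^2$ and $c_j:=2^{j-k+1}$; summation by parts gives
\begin{equation*}
\sum_{j=0}^{k-1}c_j\bigl(b_j-b_{j+1}\bigr)=2^{1-k}b_0 - b_k + \sum_{j=1}^{k-1}2^{j-k}b_j.
\end{equation*}
You bound this ``uniformly by $CM^2$'' with $M:=2\|\theta_0\|_{L^2}+1$ and declare the resulting recursion, now linear in $a_n$, closes for arbitrary data. It does not: $a_{n+k}\le 2^{-k}a_n+\tfrac{C_0}{\epsilon_0}O(M^2)+O(\tau_*)$ stays below $M$ only if $\tfrac{C_0}{\epsilon_0}M\lesssim 1$, which is exactly the small-data restriction you set out to remove; the geometric re-summation removes the dependence on $k$ but leaves the $M^2$ untouched. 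The missing input is the \emph{sign} of the telescoped sum, which your crude bound throws away. At the first time $T_M$ with $\|\theta(T_M)\|_{L^2}=M$, take $s=T_M-k\tau_*$ (handling $T_M<k\tau_*$ separately via the energy identity); then $b_j<M^2$ for $j<k$ while $b_k=M^2$, and since the nonnegative weights $2^{1-k},2^{1-k},2^{2-k},\dots,2^{-1}$ appearing in the display sum to exactly $1$, the whole quantity is strictly negative. The quadratic contribution therefore drops out entirely, and one is left with $(1-2^{-k})M\le C\tau_*\bigl(F(M)+G(M)\bigr)$, which is false once $\tau_*\le\tau_0(M,\calN)$ is small. (The published proof in \cite{IXZ21} reaches the same conclusion by a different device, a per-window dichotomy on whether the enstrophy exceeds a threshold of order $\|\theta\|_{L^2}^2/M$; either mechanism supplies the smallness that your plain $CM^2$ bound cannot.) Finally, note the theorem is stated for weak solutions, so the passage to the Duhamel representation you invoke deserves a line of justification in a complete write-up.
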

\begin{rem}
The threshold value $\tau_0$ and the exponential decay rate can be computed explicitly in terms of $F,G,C_0,\epsilon_0$ and $\Vert\theta_0\Vert_{L^2}$ through the proof of the theorem.
\end{rem}
Notice that equation~\eqref{maineq} can be formulated into~\eqref{e:ADNE} with $\calN(u)=|u|^p-\int_{\T^N} |u|^p$. Therefore to prove the global existence of~\eqref{maineq} for $v$ with small dissipation time, it suffices for us to check $\calN(u)$ satisfies the hypotheses in Theorem~\ref{thm:IXZthm}, and the formal conclusion is summarized in the following theorem.
\begin{thm}\label{thm:expdecay of maineqn}
Let $\gamma>0$ and $u_0 \in L_0^2 (\T^N)$. Then for $1<p<1+\frac{4}{N}$ there exists a threshold $\tau_0=\tau_0(\Vert u_0\Vert_{L^2},p,\gamma)$, such that if $\tau_*(v)\le\tau_0$, then there exists a constant $C_0=C_0(\|u_0\|_{L^2}, p, \gamma)>0$, such that 
\begin{equation}
    \Vert u(t)\Vert_{L^2}\le C_0e^{-\gamma t}\Vert u_0\Vert_{L^2}\,.
\end{equation}
on $[0,T]$.
\end{thm}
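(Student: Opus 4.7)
The argument casts \eqref{maineq} into the framework of Theorem \ref{thm:IXZthm} with the nonlinearity $\calN(u) = |u|^p - \int_{\T^N} |u|^p\,dx$, which is readily seen to be a measurable map $H^1(\T^N) \to L_0^2(\T^N)$. The plan is to verify hypotheses (H1) and (H2), check the sharpness condition \eqref{e:FGcond}, and then combine the resulting a priori $L^2$ bound with the local existence and blow-up criterion from Section \ref{sec2} to upgrade the mild solution to a global-in-time weak solution with exponential decay.

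To verify (H1), one exploits the cancellation built into the non-local term: since $\bigl|\int \varphi\,dx\bigr|\cdot\bigl|\int|\varphi|^p\,dx\bigr| \le \|\varphi\|_{L^{p+1}}^{p+1}$ by H\"older on the unit-volume torus, we obtain
\begin{equation*}
    \left|\int_{\T^N} \varphi\,\calN(\varphi)\,dx\right| \le 2\|\varphi\|_{L^{p+1}}^{p+1}.
\end{equation*}
Gagliardo-Nirenberg gives $\|\varphi\|_{L^{p+1}}^{p+1} \lesssim \|\nabla\varphi\|_{L^2}^{N(p-1)/2}\|\varphi\|_{L^2}^{(p+1)-N(p-1)/2} + \|\varphi\|_{L^2}^{p+1}$ (the extra term absorbs the mean of $\varphi$, for which Poincar\'e fails). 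The hypothesis $p < 1+4/N$ makes the exponent $N(p-1)/2$ strictly less than $2$, so Young's inequality with a sufficiently small parameter absorbs the gradient factor into $(1-\epsilon_0)\|\nabla\varphi\|_{L^2}^2$, leaving a polynomial $F(\|\varphi\|_{L^2})$ of degree strictly greater than $2$.

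For (H2), I would bound $\|\calN(\varphi)\|_{L^2} \le \|\varphi\|_{L^{2p}}^p + \|\varphi\|_{L^p}^p$, apply Gagliardo-Nirenberg to $\|\varphi\|_{L^{2p}}^p \lesssim \|\nabla\varphi\|_{L^2}^{N(p-1)/2}\|\varphi\|_{L^2}^{p-N(p-1)/2}$, and use Young's inequality with conjugate exponents $(2/k, 2/(2-k))$ where $k=N(p-1)/2 \le 2$ to split off $C_0\|\nabla\varphi\|_{L^2}^2 + G(\|\varphi\|_{L^2})$. Because $p>1$, the exponent of $F$ at the origin is strictly greater than $2$ and the exponent of $G$ is at least $1$, so $\bigl(\sqrt{F(y)}+G(y)\bigr)/y$ has a finite (in fact vanishing) limsup as $y\to 0^+$, which is exactly \eqref{e:FGcond}.

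With (H1), (H2), and \eqref{e:FGcond} in hand, Theorem \ref{mainthm01} yields a mild solution on a maximal interval $[0,T^*)$, which Theorem \ref{mainthm04} upgrades to a weak solution in the sense required by Theorem \ref{thm:IXZthm}. The first conclusion of Theorem \ref{thm:IXZthm} then provides a uniform bound $\sup_{[0,T^*)}\|u(t)\|_{L^2}\le 2\|u_0\|_{L^2}+1$ as soon as $\tau_*(v)\le \tau_0$, and Corollary \ref{cor01} forces $T^*=\infty$; the second conclusion yields the exponential decay. The main obstacle I anticipate is the quantitative bookkeeping needed to realize a prescribed decay rate $\gamma$: the rate produced by Theorem \ref{thm:IXZthm} is an explicit but intricate function of $F$, $G$, $C_0$, $\epsilon_0$, and $\|u_0\|_{L^2}$, so $\tau_0$ must be shrunk as a function of $\gamma$ (in addition to $\|u_0\|_{L^2}$ and $p$), which requires revisiting the IXZ proof with the constants made explicit rather than simply invoking the statement as a black box.
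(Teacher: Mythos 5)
Your proposal takes essentially the same route as the paper: cast the equation into the IXZ framework, verify (H1) and (H2) via H\"older, Gagliardo--Nirenberg and Young, check \eqref{e:FGcond}, and then combine Theorem \ref{thm:IXZthm} with the local existence theory and Corollary \ref{cor01}. Two minor remarks: your Gagliardo--Nirenberg bound carries the low-order term $\|\varphi\|_{L^2}^{p+1}$, which is the correct form on the torus for arbitrary $\varphi \in H^1$, whereas the paper silently invokes the mean-zero reduction both for the cancellation $\int u\,\calN(u)=\int u|u|^p$ and for the homogeneous GN estimate (harmless, since the IXZ theorem is only ever applied to the mean-zero solution, but your version is cleaner); and your closing observation about realizing a prescribed rate $\gamma$ is exactly right --- the paper does not carry out that bookkeeping either, but leans on the remark following Theorem \ref{thm:IXZthm} that the threshold and decay rate are explicitly computable from $F$, $G$, $C_0$, $\epsilon_0$, and $\|u_0\|_{L^2}$.
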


As a consequence of Corollary \ref{cor01} and Theorem~\ref{thm:expdecay of maineqn}, we can easily obtain the global existence of mild solutions to~\eqref{maineq}:
\begin{cor} \label{20210814cor01}
With the same assumptions of Theorem~\ref{thm:expdecay of maineqn}, the mild solution $u$ of~\eqref{maineq} exists on $[0,\infty)$.
\end{cor}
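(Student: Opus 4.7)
The plan is to run a standard blow-up/continuation argument: assume the maximal time of existence $T^{*}$ provided by Theorem \ref{mainthm01} is finite, and then use the uniform $L^{2}$-bound from Theorem \ref{thm:expdecay of maineqn} to contradict the blow-up criterion in Corollary \ref{cor01}.

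First, I would identify the mild solution with a weak solution on every closed subinterval $[0,T]\subset[0,T^{*})$, so that the hypothesis of Theorem \ref{thm:IXZthm} (which is what powers Theorem \ref{thm:expdecay of maineqn}) is met. This step is supplied directly by Theorem \ref{mainthm04}: the running assumption of Section \ref{Sec3} gives $v\in L^{\infty}(\R_{+};W^{1,\infty}(\T^{N}))\subset L^{\infty}(\R_{+};L^{\infty})$, and $u_0\in L_0^2(\T^N)$ provides the zero-mean condition \eqref{20210814eq01}. Together with the energy identity \eqref{energyid}, this places $u$ in the solution class required by Theorem \ref{thm:expdecay of maineqn} on each $[0,T]\subset[0,T^{*})$.

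Second, I would apply Theorem \ref{thm:expdecay of maineqn} on each such $[0,T]$, using the smallness assumption $\tau_{*}(v)\le\tau_0$ which is inherited from the hypotheses of the corollary. This yields
\[
\sup_{t\in[0,T]}\|u(t)\|_{L^{2}}\le C_0 e^{-\gamma t}\|u_0\|_{L^{2}}\le C_0 \|u_0\|_{L^{2}},
\]
with $C_0=C_0(\|u_0\|_{L^2},p,\gamma)$ \emph{independent of $T$}. Letting $T\uparrow T^{*}$ then produces a uniform bound $\sup_{t\in[0,T^{*})}\|u(t)\|_{L^{2}}<\infty$, which directly contradicts Corollary \ref{cor01}, forcing $T^{*}=\infty$.

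The only point that requires even a brief verification is that the constants $\tau_0$ and $C_0$ output by Theorem \ref{thm:expdecay of maineqn} depend only on $\|u_0\|_{L^{2}}$, $p$, and $\gamma$ (and in particular not on $T$); this is visible from the explicit dependence stated there and in the remark following Theorem \ref{thm:IXZthm}. Consequently, I do not expect any real obstacle: once the mild-equals-weak identification of Theorem \ref{mainthm04} is invoked, the rest is a routine continuation argument.
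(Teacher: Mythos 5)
Your proposal is correct and matches the paper's intended argument, which the authors simply summarize as ``a consequence of Corollary \ref{cor01} and Theorem \ref{thm:expdecay of maineqn}'': combine the $T$-independent $L^2$ a priori bound from the exponential-decay theorem (valid on any $[0,T]\subset[0,T^*)$, after the mild/weak identification of Theorem \ref{mainthm04}) with the blow-up criterion of Corollary \ref{cor01} to force $T^*=\infty$. Your write-up is essentially a fleshed-out version of exactly this, including the two checks that actually need to be made (that the Section \ref{Sec3} standing assumptions feed into Theorem \ref{mainthm04}, and that $\tau_0,C_0$ carry no hidden $T$-dependence).
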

\begin{rem}\label{examples}
Here we would like to include several examples when the dissipation time $\tau_*(v)$ can be arbitrarily small. 
\begin{enumerate}
\item [(1).] Our first example is the \emph{mixing flow}. Recall that from classical theories of dynamical system (see, for instance [Ergodic theory, 5 in annals]), if $v$ is said to be \emph{strongly mixing} with rate function $h$ if for every $s, t \ge 0$ and $\phi_0, \psi \in H^1(\T^N)$ with $\int_{\T^N} \phi_0 dx=\int_{\T^N} \psi dx=0$, one has
$$
\left| \langle \phi(s+t), \psi \rangle \right| \le h(t) \|\phi(s)\|_{H^1} \|\psi\|_{H^1}. 
$$
Here, $h(t): [0, \infty) \to (0, \infty)$ is a continuous decreasing function that vanishes at $\infty$ and $\phi(s):=e^{s v \cdot \nabla} \phi_0$. \cite[Proposition 1.4]{FFIT19} suggests the following: let $v_A(x, t):=Av(x, At)$, then
$$
\tau_*(v_A) \to 0 \quad \textrm{as} \quad A \to \infty; 
$$

\medskip

\item [(2).] Our second example is the \emph{cellular flow}, which is given by
\begin{equation}\label{e:sinflow}
  v_l^A(x):
    =A \nabla^\perp \left(\frac{1}{l}\sin(\frac{2\pi}{l} x)\sin(\frac{2\pi}{l} y) \right)
    = 2\pi A \begin{pmatrix} -\sin( \frac{2\pi}{l} x) \cos( \frac{2\pi}{l} y) \\
    \\
      \cos(\frac{2\pi}{l} x) \sin( \frac{2\pi}{l} y )
    \end{pmatrix}\ , 
\end{equation}
where $\ell, A>0$. Indeed, one can check that $\tau_*\left(v_l^A\right)$ can be arbitrarily small with $l$ sufficiently small and $A$ sufficiently large and we refer the reader \cite{IXZ21} for more detailed information. 

\medskip

To our best knowledge, Corollary \ref{20210814cor01} hence provides the first example rather than Keller-Segel equations which satisfy the assumptions in \cite{IXZ21} such that one can use cellular flow to prevent the blowup of a physical equation.
\end{enumerate}
\end{rem}
\begin{rem}
Corollary \ref{20210814cor01} and Remark \ref{examples} together conclude Theorem \ref{thm11}. 
\end{rem}

Now we prove Theorem~\ref{thm:expdecay of maineqn} by checking the nonlinear term $\calN(u)=|u|^p-\int_{\T^N} |u|^p dx$ satisfies the hypotheses in Theorem~\ref{thm:IXZthm}.
\begin{proof} [Proof of Theorem \ref{thm:expdecay of maineqn}]
\textit{Hypotheses $(H1)$:} For $1<p<1+\frac{4}{N}$ we have:
\begin{eqnarray*}
&&\Big\vert\int_{\T^N}\,u\calN\left(u\right)\,dx\Big\vert=\Big\vert\int_{\T^N}\,u\vert u\vert^p\,dx\Big\vert \\
&& \quad \quad \quad \quad \le \Vert u\Vert_{L^{p+1}}^{p+1} \\
&& \quad \quad \quad \quad \lesssim \Vert u\Vert_{L^2}^{(p+1)-(\tfrac{p-1}{2})N} \Vert \nabla u\Vert_{L^2}^{(\tfrac{p-1}{2})N}  \\
&& \quad \quad \quad \quad \lesssim \frac{1}{2}\Vert \nabla u\Vert_{L^2}^2 + C\Vert u\Vert_{L^2}^{\tfrac{4(p+1)-2(p-1)N}{4-(p-1)N}},
\end{eqnarray*}
Thus $(H1)$ is satisfied with $\epsilon_0=\frac{1}{2}$ and $F(y)=Cy^{\tfrac{4(p+1)-2(p-1)N}{4-(p-1)N}}$.

\medskip

\textit{Hypotheses $(H2)$:}
\begin{equation*}
\Big\Vert\calN\left(u\right)\Big\Vert_{L^2}\le\Vert u\Vert_{L^{2p}}^{p}+\Vert u\Vert_{L^p}^{p}\,,
\end{equation*}
For the first term, by the Gagliardo-Nirenberg and Young's inequalities, for $1<p<1+\frac{4}{N}$ we have:
\begin{equation*}
    \Vert u\Vert_{L^{2p}}^p
    \lesssim \Vert\nabla u\Vert_{L^2}^{\tfrac{(p-1)N}{2}} \Vert  u\Vert_{L^2}^{p-\tfrac{(p-1)N}{2}}
    \lesssim \frac{C_0}{2}\Vert \nabla u\Vert_{L^2}^2+C_1\Vert u\Vert_{L^2}^{\tfrac{4p-2(p-1)N}{4-(p-1)N}}\,.
\end{equation*}
Similarly, for the second term we have:
\begin{equation*}
    \Vert u\Vert_{L^{p}}^p
    \lesssim \Vert\nabla u\Vert_{L^2}^{(\frac{p}{2}-1)N} \Vert  u\Vert_{L^2}^{p-(\frac{p}{2}-1)N}
    \lesssim \frac{C_0}{2}\Vert \nabla u\Vert_{L^2}^2+C_2\Vert u\Vert_{L^2}^{\tfrac{4p-2(p-2)N}{4-(p-2)N}}\,.
\end{equation*}
Thus $(H2)$ is satisfied with $G(y)=C_1y^{\tfrac{4p-2(p-1)N}{4-(p-1)N}}+C_2y^{\tfrac{4p-2(p-2)N}{4-(p-2)N}}$. Moreover, one can further verify that $F,G$ satisfy~\eqref{e:FGcond}. The proof is complete. 
\end{proof}

\bigskip

\section{Applications to shear flow} \label{Sec4} 

In this section, we consider an application of Corollary \ref{cor01} to the case when $v$ becomes a horizontal shear flow. More precisely, we consider the case when $N=2$ and the flow ${\bf v}$ is given by ${\bf v}=(v_1(x_2), 0)$:
$$
\partial_t u+Av_1(x_2) \partial_{x_1} u-\Delta u=|u|^p-\int_{\T^2} |u|^p,
$$
where the parameter $A>0$ is the amplitude of the flow, $1<p<2$, $v_1 \in L^\infty(\T)$ and $x=(x_1, x_2) \in \T^2$. By a change of time, we can write the above equation by 
\begin{equation} \label{sheareq}
\partial_t u+v_1(x_2) \partial_{x_1} u-\nu \Delta u=\nu |u|^p-\nu \int_{\T^2}|u|^p,
\end{equation} 
where $\nu=A^{-1}$. Here, $\nu$ is the \emph{viscosity coefficient}, which measures the strength of the dissipation. The goal is to show the global existence of the equation \eqref{sheareq} under proper assumption on ${\bf v}$ and initial condition $u_0$.

For any $g \in L^2(\T^2)$, we denote 
$$
\langle g \rangle(t, x_2)=\int_{\T} g(t, x_1, x_2)dx_1 \quad \textrm{and} \quad g_{\notparallel}(t, x_1, x_2)=g(t, x_1, x_2)-\langle g \rangle(t, x_2).
$$
Note that
$$
\langle g \rangle \in \textrm{Ker} \left(v_1(x_2)\partial_{x_1} \right) \quad \textrm{and} \quad g_{\notparallel} \in \big(\textrm{Ker} \left(v_1(x_2)\partial_{x_1} \right) \big)^{\perp}. 
$$
We have the following property on shear flow ${\bf v}$.

\begin{prop}{\cite{CDE20}}\label{prop41}
If $v_1$ has only finitely many critical points with order at most $m$, namely, at most $m-1$ derivatives vanishes at these critical points. Then there exists some constant $C>0$, such that 
\begin{equation} \label{20210518eq14b}
\|e^{-\left(v_1\partial_{x_1} \right)t} g_{\notparallel} \|_{H^{-1}} \le \frac{C}{(1+t)^m} \|g_{\notparallel}\|_{H^1}, \quad t \ge 0, 
\end{equation} 
for any $g \in L^2_0(\T^2)$, or $L^2$ function with zero mean.
\end{prop}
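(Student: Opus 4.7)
The plan is to diagonalize the transport operator by taking the Fourier series in $x_1$: writing $g_{\notparallel} = \sum_{k \ne 0} \hat g(k, x_2) e^{2\pi i k x_1}$, the solution operator becomes multiplication by $e^{-2\pi i k v_1(x_2) t}$ on each mode, and the zero mode vanishes by assumption. Using duality, $\|e^{-(v_1\partial_{x_1})t} g_{\notparallel}\|_{H^{-1}}$ equals the supremum over $\phi \in H^1$ with $\|\phi\|_{H^1} \le 1$ of
\begin{equation*}
\sum_{k \ne 0} \int_\T e^{-2\pi i k v_1(x_2) t}\,\hat g(k, x_2)\,\overline{\hat \phi(k, x_2)}\, dx_2,
\end{equation*}
so the problem reduces to one-dimensional oscillatory integral estimates in $x_2$ with large phase $2\pi k t\, v_1$ and smooth amplitudes built from $\hat g(k, \cdot)$ and $\hat \phi(k, \cdot)$.

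The core idea is a van der Corput / stationary phase argument that exploits the critical-point structure of $v_1$. Using a partition of unity, I would split $\T$ into (i) a region where $|v_1'|\ge c>0$ and (ii) small neighborhoods of each critical point $x_2^{(j)}$, where $v_1(x_2)-v_1(x_2^{(j)}) \sim (x_2-x_2^{(j)})^{m_j}$ with $m_j\le m$. On region (i), iterating integration by parts via $e^{-2\pi i k v_1 t}=(-2\pi i k t\, v_1')^{-1}\partial_{x_2} e^{-2\pi i k v_1 t}$ a total of $m$ times produces the decay factor $(kt)^{-m}$; the cost is $m$ derivatives landing on $\hat g\,\overline{\hat\phi}/(v_1')^m$, which are absorbed by the $H^1$-norms of $g_{\notparallel}$ and $\phi$ together with the smoothness of $v_1$. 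On the critical-point regions, I would further split into an inner ball of radius $\rho_j$, estimated trivially via the one-dimensional embedding $H^1\hookrightarrow L^\infty$, and an outer annulus on which $|v_1'|\sim|x_2-x_2^{(j)}|^{m_j-1}$, so that integration by parts is available but with an integrable singular weight. Optimizing $\rho_j$ in $kt$ balances the two contributions and produces a matching decay of order $(kt)^{-m_j}$.

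The main technical hurdle is bookkeeping the interaction between the singular weights $|x_2-x_2^{(j)}|^{-\alpha}$ generated by repeated division by $v_1'$ near each critical point and the available Sobolev regularity of the amplitude — this is exactly the step that upgrades the pointwise van der Corput rate into the cleaner $H^{-1}$-to-$H^1$ rate $(1+t)^{-m}$. Finally, summing the one-dimensional bounds over $k\ne 0$ and invoking Plancherel in $x_1$ (using that the $k$-factors from integration by parts combine with the weights defining the $H^{-1}$ and $H^1$ norms to absorb the sum) yields the claimed inequality. Since this estimate is quoted directly from \cite{CDE20}, my plan would then be to refer the reader there for the full technical implementation and only verify here that the present hypotheses on $v_1$ match theirs.
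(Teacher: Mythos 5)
The paper itself gives no proof of this proposition; it is quoted verbatim from \cite{CDE20}, so ultimately deferring to that reference, as you do, is consistent with the paper. Your overall strategy---Fourier series in $x_1$, duality, and a one-dimensional (non-)stationary phase decomposition in $x_2$ around the critical points of $v_1$---is the right framework, and it is how shear mixing estimates of this type are established. There are, however, two concrete problems in the execution.

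\textbf{Not enough regularity to iterate integration by parts.} On the non-stationary region you propose to integrate by parts $m$ times, placing $m$ derivatives on the amplitude $\hat g(k,\cdot)\,\overline{\hat\phi(k,\cdot)}\,(v_1')^{-m}$ and ``absorbing'' them into the $H^1$-norms of $g$ and $\phi$. This step fails: each factor is only $H^1_{x_2}$, so the amplitude supports exactly one $x_2$-derivative. After a single integration by parts you have exhausted the available regularity; $\partial_{x_2}^m \hat g(k,\cdot)$ is not a function for $m\ge 2$. The $(kt)^{-m}$ gain you want on the non-stationary region is therefore not available from this argument.

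\textbf{The exponent $(1+t)^{-m}$ appears to be a misprint for $(1+t)^{-1/m}$.} A critical point $y_0$ of $v_1$ with exactly $m-1$ vanishing derivatives, i.e.\ $v_1(y)-v_1(y_0)\sim (y-y_0)^m$, contributes a van der Corput rate $|kt|^{-1/m}$, not $|kt|^{-m}$, and this is the dominant contribution. A direct check shows the inequality as printed cannot hold: take $v_1(x_2)=\sin(2\pi x_2)$ (so $m=2$) and $g=e^{2\pi i x_1}$. Then
\begin{equation*}
e^{-(v_1\partial_{x_1})t}g = e^{2\pi i\left(x_1 - t\sin(2\pi x_2)\right)},
\end{equation*}
whose $x_2$-Fourier coefficients on the mode $k=1$ are Bessel functions $J_{\ell}(2\pi t)$, and one computes $\|e^{-(v_1\partial_{x_1})t}g\|_{H^{-1}}^2=\sum_{\ell}|J_{\ell}(2\pi t)|^2/(2+\ell^2)\sim t^{-1}$, i.e.\ decay of order $t^{-1/2}$ against the constant $\|g\|_{H^1}$, not $t^{-2}$. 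Your plan aims at the printed exponent, which is why you are forced into the impossible $m$-fold integration by parts; targeting $(1+t)^{-1/m}$ removes that need.

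With that correction your outline closes: a single integration by parts on $\{|v_1'|\ge c\}$ gives $|kt|^{-1}$ with an amplitude controlled in $W^{1,1}_{x_2}$ by $\|\hat g(k,\cdot)\|_{H^1}\|\hat\phi(k,\cdot)\|_{H^1}$; the trivial bound $\|\hat g(k,\cdot)\|_{L^\infty}\|\hat\phi(k,\cdot)\|_{L^\infty}\cdot\rho_j$ (via the 1D embedding $H^1\hookrightarrow L^\infty$) controls a $\rho_j$-ball about each critical point; optimizing $\rho_j\sim |kt|^{-1/m}$ balances the two, and summing in $k$ by Cauchy--Schwarz against the $H^1/H^{-1}$ weights gives the global bound. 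You should verify the exponent convention in \cite{CDE20} before citing it, and flag the discrepancy with \eqref{20210518eq14b}.
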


A direct consequence of the mixing condition above is an enhanced dissipation estimate for its corresponding advection-diffusion model:

\begin{prop}{\cite[Theorem 2.1]{CDE20}}\label{prop42}
For each $\nu>0$ and ${\bf v}=(v_1(x_2), 0)$, $v_1$ satisfies the assumption on Proposition \ref{prop41}, define
\begin{equation} \label{20210512eq111}
H_{\nu}:=-\nu \Delta+v_1(x_2) \partial_{x_1}.
\end{equation} 
Then for any $g \in L^2_0(\T^2)$, one has for any $t \ge 0$,
\begin{equation} \label{20210512eq01}
\|e^{-t H_\nu} g_{\notparallel}\|_{L^2} \le 10e^{-\lambda_\nu t} \|g_{\notparallel}\|_{L^2}, \quad \lambda_\nu:=c_0 \nu ^{\frac{2}{2+m}},
\end{equation}
where $c_0>0$ is some absolute constant independent of the choice of $\nu$. 
\end{prop}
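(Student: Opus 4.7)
The plan is to follow the framework of Coti Zelati--Delgadino--Elgindi that converts a mixing estimate in negative Sobolev norms, such as \eqref{20210518eq14b}, into an enhanced dissipation rate for the associated advection-diffusion semigroup. Three ingredients are involved: a Fourier reduction in $x_1$, the standard energy identity, and an interpolation dichotomy carried out on short time slabs of length $\tau \sim \nu^{-m/(m+2)}$.

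First I would decompose along Fourier modes in $x_1$: writing $g = \sum_{k\in\Z} \hat{g}_k(x_2) e^{2\pi i k x_1}$ we note that $H_\nu$ preserves each mode, so
\[
e^{-tH_\nu} g_{\notparallel}(x_1,x_2) = \sum_{k \neq 0} e^{-tH_{\nu,k}}\hat{g}_k(x_2)\, e^{2\pi i k x_1},\qquad H_{\nu,k} := -\nu \partial_{x_2}^2 + 4\pi^2 \nu k^2 + 2\pi i k v_1(x_2).
\]
By Parseval it suffices to establish the decay for each $k\neq 0$ with constants uniform in $k$, which is precisely the content of the $g_{\notparallel}$ projection.

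Next I would run a time-slicing argument. Fix a slab $[t_0,t_0+\tau]$ with $\tau = c_1 \nu^{-m/(m+2)}$ and seek to prove $\|g(t_0+\tau)\|_{L^2} \le \tfrac{1}{2}\|g(t_0)\|_{L^2}$ whenever $g(t_0)$ has zero $x_1$-average. The energy identity
\[
\|g(t_0+\tau)\|_{L^2}^2 + 2\nu \int_{t_0}^{t_0+\tau} \|\nabla g(s)\|_{L^2}^2 \, ds = \|g(t_0)\|_{L^2}^2
\]
suggests a dichotomy. Either the dissipation integral is bounded below by a definite fraction of $\|g(t_0)\|_{L^2}^2/\nu$, and we win immediately; or it is small, in which case $g(s)$ is close to $g(t_0)$ in an integrated $H^1$ sense over the slab. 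In the latter regime I would compare the full evolution to the pure transport $e^{-(s-t_0)v_1(x_2)\partial_{x_1}} g(t_0)$ through Duhamel's formula, apply the mixing estimate from Proposition \ref{prop41} to gain the factor $(1+\tau)^{-m}$ on the $H^{-1}$ norm, and then interpolate through $\|g\|_{L^2}^2 \le \|g\|_{H^{-1}}\|g\|_{H^1}$ to translate the negative-norm decay into a genuine $L^2$ contraction. Balancing the two alternatives pins down the choice $\tau \sim \nu^{-m/(m+2)}$ and produces a halving of the $L^2$ norm on each slab. Iterating across consecutive slabs then yields the exponential decay at rate $\lambda_\nu \sim \tau^{-1} = c_0 \nu^{2/(m+2)}$.

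The main obstacle is the interpolation step in the small-dissipation alternative: the Duhamel remainder involves $\nu \Delta g$, and one must verify that this diffusive correction does not spoil the $H^{-1}$ decay obtained from pure transport. Controlling this error forces the precise slab length $\tau \sim \nu^{-m/(m+2)}$ and is what determines the exponent $2/(m+2)$. Once this balance is carried out cleanly, as done in \cite{CDE20}, the prefactor $10$ appearing in \eqref{20210512eq01} emerges from tracking constants through finitely many iterations before the semigroup decay takes over.
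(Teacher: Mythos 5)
The paper offers no proof of this proposition; it is quoted directly from \cite[Theorem 2.1]{CDE20}, so there is no in-paper argument to compare against. Judging your sketch on its own merits: the outline you describe (Fourier reduction in $x_1$, an energy dichotomy over time slabs, Duhamel comparison to pure transport, the mixing estimate of Proposition \ref{prop41}, and interpolating between $H^{-1}$ and $H^1$) is indeed the mechanism used in \cite{CDE20} to convert a polynomial mixing rate into an enhanced dissipation rate, so the approach is the right one.

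That said, there are two concrete problems. First, your slab length $\tau\sim\nu^{-m/(m+2)}$ is inconsistent with your own claimed rate $\lambda_\nu=\tau^{-1}\sim\nu^{2/(m+2)}$ except when $m=2$; the slab length consistent with \eqref{20210512eq01} is $\tau\sim\nu^{-2/(m+2)}$. Second, and this is the real gap: Proposition \ref{prop41} decays $H^1$ into $H^{-1}$, a two-derivative jump, so it cannot be applied directly to $g(t_0)\in L^2$. One must devote part of the slab to parabolic smoothing to obtain an $H^1$ bound on the data at cost roughly $(\nu\tau)^{-1/2}\|g(t_0)\|_{L^2}$, then apply the mixing decay $(1+\tau)^{-m}$, control the Duhamel remainder from the $\nu\Delta$ term (of size roughly $(\nu\tau)^{1/2}\|g(t_0)\|_{L^2}$ in $H^{-1}$), and only then interpolate; it is precisely the competition among these three quantities that pins down the exponent $2/(m+2)$. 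You flag this balance as the main obstacle but do not carry it out, and the exponent slip in your slab length is a symptom of this omitted computation. Until that accounting is done explicitly, the sketch does not actually establish the stated rate $\lambda_\nu=c_0\nu^{2/(2+m)}$, only the qualitative shape of the argument. The prefactor $10$ is also not produced by ``tracking constants through finitely many iterations''; it arises from the $L^2$-contractivity of $e^{-tH_\nu}$ together with the fact that the halving only takes effect after one full slab, so an absolute constant is needed to cover $t\in[0,\tau]$.
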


\begin{thm} \label{mainthm03}
Let $u_0 \in L^2_0(\T^2)$ satisfy the following conditions:\\
The $L_{x_2}^2$-norm of the average of $u_0$ along $x_2$-direction is sufficiently small, that is, 
\begin{equation} \label{20210518eq14}
0 \le \left\| \langle u_0 \rangle \right\|_{L_{x_2}^2} \ll 1.
\end{equation} 

Let further, ${\bf v} \in L^\infty(\T^2): [0, 1)^2 \to  \R$ be a shear flow satisfies the assumption on Proposition \ref{prop41} with $m \ge 2$. Then there exists $0<\nu_0<1$ depending on $v_1$ and $\|u_0\|_{L^2}$ with the following property: for any $0<\nu<\nu_0$, there exists a global-in-time weak solution $u$ of \eqref{sheareq} with initial data $u_0$ such that $u \in L^\infty([0, \infty); L^2(\T^2)) \cap L^2([0, \infty); H^1(\T^2))$. 
\end{thm}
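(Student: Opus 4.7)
My plan is to establish a uniform-in-time a priori bound $\sup_{t \ge 0} \|u(t)\|_{L^2(\T^2)} \le M$ for some $M = M(\|u_0\|_{L^2})$, whence global existence of the mild solution follows from Corollary \ref{cor01} applied to the local solution produced by Theorem \ref{mainthm01} (since ${\bf v} \in L^\infty \subset L^{2/(p-1)}$), and the $L^\infty_t L^2_x \cap L^2_t H^1_x$ regularity then comes from the energy identity of Theorem \ref{mainthm04}.

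To exploit the shear structure I decompose $u = \bar u + u_{\notparallel}$ with $\bar u(t,x_2) = \langle u\rangle(t,x_2)$ and $u_{\notparallel} = u-\bar u$. A direct computation (the advection term kills $\bar u$ and the zero-mode in $x_1$ averages vanish) yields the decoupled sub-equations
\begin{equation*}
\partial_t \bar u - \nu \partial_{x_2}^2 \bar u = \nu\bigl(\langle |u|^p\rangle - \textstyle\int_{\T^2}|u|^p\bigr), \qquad \partial_t u_{\notparallel} + v_1 \partial_{x_1} u_{\notparallel} - \nu \Delta u_{\notparallel} = \nu (|u|^p)_{\notparallel},
\end{equation*}
each with mean-zero right-hand side and $\int_\T \bar u\, dx_2 = 0$. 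The first is a forced 1D heat equation on $\T$ (decay rate $\sim \nu$ by Poincar\'e), while the second is governed by the operator $H_\nu$ of Proposition \ref{prop42}, whose semigroup enjoys the enhanced decay rate $\lambda_\nu = c_0\nu^{2/(2+m)}$; since $m \ge 2$, one has $\lambda_\nu/\nu \to \infty$ as $\nu \to 0$.

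The core of the argument is a bootstrap. On an interval $[0,T]$ where $\|u(t)\|_{L^2} \le M$, I apply Duhamel's formula to get
\begin{equation*}
\|u_{\notparallel}(t)\|_{L^2} \le 10 e^{-\lambda_\nu t}\|u_{\notparallel}(0)\|_{L^2} + 10\nu \int_0^t e^{-\lambda_\nu(t-s)}\|u(s)\|_{L^{2p}}^p\, ds,
\end{equation*}
and a parallel bound for $\|\bar u(t)\|_{L^2}$ with kernel $e^{-c\nu(t-s)}$ and initial contribution $e^{-c\nu t}\|\bar u(0)\|_{L^2}$. The Gagliardo--Nirenberg inequality $\|u\|_{L^{2p}}^p \lesssim \|u\|_{L^2}\|\nabla u\|_{L^2}^{p-1}$ on $\T^2$, combined with the integrated energy identity (which under the bootstrap and Young's inequality using $p<2$ yields a bound of the form $\nu \int_0^T \|\nabla u\|_{L^2}^2\, ds \lesssim_{M,p} \|u_0\|_{L^2}^2 + \nu T\, M^{4/(3-p)}$), controls the nonlinear forcing. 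Choosing $\nu_0 = \nu_0(v_1,\|u_0\|_{L^2})$ small enough that $\lambda_\nu$ dominates the perpendicular forcing integral, and using the hypothesis $\|\bar u(0)\|_{L^2}\ll 1$ to keep the $\bar u$-forcing contribution small throughout, closes the bootstrap at a level $M$ of the form $C\|u_0\|_{L^2}$.

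The principal obstacle will be balancing the coupled nonlinearity $|u|^p = |\bar u + u_{\notparallel}|^p$ against the two distinct dissipation scales $\nu \ll \lambda_\nu$: any estimate that does not separate $\bar u$ and $u_{\notparallel}$ loses the advantage of the enhanced rate, and the GN bound introduces $\|\nabla u\|^{p-1}$ which must be absorbed by the standard dissipation. The two structural ingredients that make the argument go through are the subcriticality $p<2 = 1+2/N$ with $N=2$, which ensures the nonlinearity is Young-absorbable against $\nu\|\nabla u\|^2$, and the scale separation $\lambda_\nu/\nu \to \infty$, which allows the nonlinear feedback on $u_{\notparallel}$ to be made arbitrarily small uniformly in time by taking $\nu_0$ small, regardless of how large $\|u_{\notparallel}(0)\|_{L^2}$ is.
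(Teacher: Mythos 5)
Your high-level strategy — decompose $u = \langle u\rangle + u_{\notparallel}$, run enhanced dissipation (Proposition \ref{prop42}) on the perpendicular part, run a small-energy argument on the average part, and close a bootstrap using the scale separation $\nu/\lambda_\nu \to 0$ — is precisely the paper's strategy. But the way you set up the bootstrap does not close, and the missing ingredient is a specific algebraic cancellation, not a technicality.

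The crux: your Duhamel bound for $u_{\notparallel}$ controls the forcing by $\|u(s)\|_{L^{2p}}^p$. This quantity carries the contribution of $\langle u\rangle$, which does not decay in time — it merely stays $O(1)$. So even with scale separation, the best you can conclude is that $\|u_{\notparallel}(t)\|_{L^2}$ saturates at a level $\sim (\nu/\lambda_\nu)\cdot\|\langle u\rangle\|_{L^2}^p$ rather than decaying to zero, and then the energy inequality for $u_{\notparallel}$ only gives $\nu\int_0^T\|\nabla u_{\notparallel}\|_{L^2}^2\,d\tau \lesssim \nu T$, which diverges. This is exactly what your own displayed estimate $\nu\int_0^T\|\nabla u\|_{L^2}^2\,ds \lesssim \|u_0\|_{L^2}^2 + \nu T M^{4/(3-p)}$ shows: the right side grows linearly in $T$, so it cannot feed back into a $T$-independent bootstrap and cannot deliver the claimed $u \in L^2([0,\infty);H^1)$. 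The paper removes this obstruction by replacing the perpendicular equation with the equivalent form \eqref{modeqnotpar}, in which $\nu|\langle u\rangle|^p$ (and its $x_1$-average, which cancel since $|\langle u\rangle|^p$ is independent of $x_1$) is subtracted for free from the right-hand side. Combined with the elementary factorization of Lemma \ref{20210520lem10}, $\left||a+b|^p-|b|^p\right| \lesssim |a|\left(|a|^{p-1}+|b|^{p-1}\right)$, this makes every forcing term in the $u_{\notparallel}$-equation carry an explicit factor of $u_{\notparallel}$; the forcing then inherits the exponential decay of $u_{\notparallel}$ itself, the bootstrap closes on intervals of length $\tau^* = 4/\lambda_\nu$ (Lemma \ref{best2lem02}, Proposition \ref{best03prop}), and one obtains the uniform bound $\nu\int_0^\infty\|\nabla u_{\notparallel}\|_{L^2}^2\,d\tau \lesssim \|u_{\notparallel}(0)\|_{L^2}^2$.

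A secondary point worth flagging: you attribute the need for $\|\langle u_0\rangle\|_{L^2_{x_2}} \ll 1$ to ``keeping the $\bar u$-forcing contribution small.'' In fact the smallness is needed to control the \emph{self}-interaction of $\langle u\rangle$. The energy estimate \eqref{20210518eq02} produces a term $\nu\|\langle u\rangle\|_{L_{x_2}^{p+1}}^{p+1}$ coming from $|\langle u\rangle|^p$, which after Gagliardo--Nirenberg and Young becomes $C_p\nu K^{4(p-1)/(5-p)}\|\langle u\rangle\|_{L^2_{x_2}}^2$ with $K = \sup\|\langle u\rangle\|_{L^2_{x_2}}$. The only dissipation available to absorb this is the $O(\nu)$ term $\nu\|\partial_{x_2}\langle u\rangle\|_{L^2}^2$ (the shear does not help the $x_1$-average at all), and absorption via the one-dimensional Poincar\'e inequality works precisely when $K$ is below the explicit threshold \eqref{20210518eq03}. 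The cross term involving $u_{\notparallel}$ is comparatively benign because it decays. So the smallness hypothesis is structural, not just a convenience for the Duhamel integral, and your proposal should identify it as such.
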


\begin{rem} \label{20210517rem01}
\begin{enumerate}
\item [(1).] In the later proof, we will have a quantitative bound on describing how small the integral in \eqref{20210518eq14} is (see, \eqref{20210518eq15}); 

\medskip

\item [(2).] Note that we only require along the $x_1$-direction, the flow ${\bf v}$ is mixing, while along $x_2$-direction the flow ${\bf v}$ is identically equal to zero and hence ${\bf v}$ itself is not a mixing flow. Therefore, Theorem \ref{mainthm03} is independent of the results in Section 3 and of its own interest; 

\medskip

\item [(3).] The condition \eqref{20210518eq14} is natural and dates back to a comprehensive study of the non-local semi-linear parabolic equation \eqref{withoutv} with Neumann boundary condition by El Soufi, Jazar and Monneau \cite[Theorem 1.5]{SJM07}. Nevertheless, we make a remark that our assumption \eqref{20210518eq14} here is slightly different from the one in \cite{SJM07}, which requires the $L^\infty$-norm of the initial data being sufficiently small. 

\medskip

\item [(4).] We observe that Theorem \ref{mainthm03} holds locally from the arguments on Section \ref{sec2}. Indeed, by Theorem \ref{mainthm04}, we have seen that at least for a sufficiently small time $t_0>0$, the weak solution $u$ of \eqref{sheareq} exists and
\begin{equation} \label{20210511eq01}
u \in L^\infty([0, t_0]; L^2(\T^2)) \cap L^2([0, t_0]; H^1(\T^2))
\end{equation} 
with $\partial_t u \in L^2([0, t_0]; H^{-1}(\T^2))$. Moreover, Corollary \ref{cor01} suggests that the mild and weak solution persists as long as its $L^2$ norm is finite. In this section, to prove Theorem \ref{mainthm03}, we only need to show $\|u\|_{L^2}$ is bounded uniformly in time. 
\end{enumerate}
\end{rem}

\medskip

\subsection{Decomposition of the solution}

We let $u$ be the weak solution as in Theorem \ref{mainthm04}, and recall that 
$$
\langle u \rangle(t, x_2)=\int_{\T} u(t, x_1, x_2)dx_1 \quad \textrm{and} \quad u_{\notparallel}(t, x_1, x_2)=u(t, x_1, x_2)-\langle u \rangle(t, x_2). 
$$
By \eqref{20210511eq01}, it is easy to see that
\begin{equation} \label{20210511eq02}
\langle u \rangle \in L^\infty([0, t_0]; L^2(\T)) \cap L^2([0, t_0]; H^1(\T))
\end{equation}
and 
\begin{equation} \label{20210511eq03}
u_{\notparallel} \in L^\infty([0, t_0]; L^2(\T^2)) \cap L^2([0, t_0]; H^1(\T^2)). 
\end{equation} 

Moreover, formally, by integrating \eqref{sheareq} in $x_1$ and subtracting it from \eqref{sheareq}, one may notice that $\langle u \rangle$ and $u_{\notparallel}$ satisfy the following system of coupled equations: 
\begin{equation} \label{eqpar}
\partial_t \langle u \rangle -\nu \partial_{x_2}^2 \langle u \rangle=\nu \int_{\T} \left|\langle u \rangle+u_{\notparallel}\right|^pdx_1-\nu \int_{\T^2} \left|\langle u \rangle+u_{\notparallel}\right|^pdx_1dx_2
\end{equation}
and
\begin{equation} \label{eqnotpar}
\partial_t u_{\notparallel}+v_1(x_2)\partial_{x_1} u_{\notparallel}-\nu \Delta u_{\notparallel}=\nu \left|\langle u \rangle+u_{\notparallel} \right|^p-\nu \int_{\T} |\langle u \rangle+u_{\notparallel}|^p dx_1.  
\end{equation} 
Nevertheless, parallel to Definition \ref{mildsol}, we also have the two corresponding parts of mild solution to \eqref{sheareq} satisfies the following equations:
\begin{align*} 
\label{mildsoladpara}
\langle u \rangle (t)&= e^{-t\partial^2_{x_2}} \left(\langle u \rangle(0) \right)+\nu \int_0^t e^{-(t-\tau)\partial^2_{x_2}} \left( \int_{\mathbb{T}}|\langle u \rangle(\tau)+u_{\notparallel}(\tau)|^pdx_1\right) \nonumber \\ &\quad-\nu\int_0^t\int_{\T^2}  |\langle u \rangle(\tau)+u_{\notparallel}(\tau)|^p  dx_1dx_2  d\tau 
\end{align*} 
and
\begin{equation} \label{20210512eq201}
u_{\notparallel}(t)= \calS_{t-s} \left(u_{\notparallel}(s) \right)+\nu \int_s^t \calS_{t-\tau} \left( |\langle u \rangle(\tau)+u_{\notparallel}(\tau)|^p   -\int_\T  |\langle u \rangle(\tau)+u_{\notparallel}(\tau)|^p  dx_1 \right) d\tau, 
\end{equation} 
where $\calS_t$ is the solution operator from $0$ to time $t \ge 0$ for the advection-diffusion equation
$$
\partial_t g+v_1(x_2) \partial_{x_1} g- \nu \Delta g=0,
$$
namely, $\calS_t=e^{-tH_\nu}$, where $H_{\nu}$ is defined in \eqref{20210512eq111}. 

%


\subsection{Bootstrap assumption} The goal of the second part of this section is to prove some prior estimates for $u_{\notparallel}$, which can be thought as the starting point for our bootstrap argument. 

Recall that $t_0>0$ is defined as in \eqref{20210511eq01}. We set up our prior bootstrap assumptions via the following facts. 

\begin{lem}
Under the assumption of Proposition \ref{prop42}, there exists a sufficiently small time $0<\widetilde{t}_{0, 1} \le t_0$, such that for any $0 \le s<t \le \widetilde{t}_{0, 1}$, there holds
\begin{equation} \label{20210521eq301}
\|u_{\notparallel} (t)\|_{L^2} \lesssim e^{-\frac{\lambda_\nu (t-s)}{4}} \left\|u_{\notparallel}(s) \right\|_{L^2}. 
\end{equation} 
Here, $\widetilde{t}_{0, 1}$ can depend on $\nu$, $p$, $\|u\|_{X_{t_0}}$, $\|u\|_{L^2([0, t_0]; H^1)}$, ${\bf v}$ and $\|u_0\|_{L^2}$, where we recall that $u=\langle u \rangle+u_{\notparallel}$ and $u_0=u(0)$, and the implicit constant in  \eqref{20210521eq301} only depends on the dimension. 
\end{lem}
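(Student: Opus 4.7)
The plan is to combine the Duhamel formula~\eqref{20210512eq201} with the enhanced dissipation estimate of Proposition~\ref{prop42} and the short-time regularity supplied by Theorem~\ref{mainthm01}. Taking $L^2$-norms in~\eqref{20210512eq201} and noting that both $u_{\notparallel}(s)$ and the forcing $|u|^p-\int_{\T}|u|^p\,dx_1$ have vanishing $x_1$-average, Proposition~\ref{prop42} applied inside the Duhamel integral yields
\begin{equation*}
\|u_{\notparallel}(t)\|_{L^2}
\le 10\,e^{-\lambda_\nu(t-s)}\|u_{\notparallel}(s)\|_{L^2}
+ 20\nu\int_s^t e^{-\lambda_\nu(t-\tau)}\,\|u(\tau)\|_{L^{2p}}^{p}\,d\tau,
\end{equation*}
where I used $\bigl\||u|^p - \int_\T |u|^p\,dx_1\bigr\|_{L^2}\le 2\|u\|_{L^{2p}}^p$. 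The first term already matches the target since $e^{-\lambda_\nu(t-s)}\le e^{-\lambda_\nu(t-s)/4}$.

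To handle the nonlinear integral I would apply the two-dimensional Gagliardo--Nirenberg inequality $\|u\|_{L^{2p}}^p\lesssim \|\nabla u\|_{L^2}^{p-1}\|u\|_{L^2}$ (valid because $u$ has zero mean, which is preserved by the flow thanks to the non-local correction in~\eqref{sheareq}) together with the pointwise bounds $\|u(\tau)\|_{L^2}\le \|u\|_{X_{t_0}}$ and $\tau^{1/2}\|\nabla u(\tau)\|_{L^2}\le \|u\|_{X_{t_0}}$ from the definition of $X_{t_0}$. This gives $\|u(\tau)\|_{L^{2p}}^p \lesssim \tau^{-(p-1)/2}\|u\|_{X_{t_0}}^p$, and since $(p-1)/2\in(0,\tfrac12)$ is integrable at the origin for $1<p<2$, we obtain
\begin{equation*}
\nu\int_s^t e^{-\lambda_\nu(t-\tau)}\|u(\tau)\|_{L^{2p}}^p\,d\tau
\lesssim_{p}\nu\,\|u\|_{X_{t_0}}^p\,\widetilde{t}_{0,1}^{(3-p)/2}.
\end{equation*}

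The main obstacle is absorbing this additive remainder into $\|u_{\notparallel}(s)\|_{L^2}$ with a constant depending only on the dimension, which is delicate because $\|u_{\notparallel}(s)\|_{L^2}$ is not a priori bounded below along the interval. I plan to handle this by invoking continuity of $s \mapsto \|u_{\notparallel}(s)\|_{L^2}$, coming from $u\in C([0,t_0];L^2)$, to secure a uniform lower bound $\|u_{\notparallel}(s)\|_{L^2} \ge \tfrac12\|u_{\notparallel}(0)\|_{L^2}$ on $[0,\widetilde{t}_{0,1}]$ once $\widetilde{t}_{0,1}$ is chosen sufficiently small. The smallness condition~\eqref{20210518eq14} assumed in Theorem~\ref{mainthm03} guarantees that $\|u_{\notparallel}(0)\|_{L^2}$ is comparable to $\|u_0\|_{L^2}$, so that choosing $\widetilde{t}_{0,1}$ so small that
\begin{equation*}
C(p)\,\nu\,\|u\|_{X_{t_0}}^p\,\widetilde{t}_{0,1}^{(3-p)/2} \le \tfrac12\|u_{\notparallel}(0)\|_{L^2}
\end{equation*}
closes the estimate and produces~\eqref{20210521eq301} with an absolute dimensional implicit constant. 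The dependencies of $\widetilde{t}_{0,1}$ on $\nu$, $p$, $\|u\|_{X_{t_0}}$, $\|u\|_{L^2([0,t_0];H^1)}$, $\mathbf{v}$ and $\|u_0\|_{L^2}$ allowed by the statement are exactly what enter this threshold.
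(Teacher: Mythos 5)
Your strategy --- take the $L^2$ norm of the Duhamel formula \eqref{20210512eq201}, apply the enhanced dissipation estimate \eqref{20210512eq01} to $\calS_{t-s}u_{\notparallel}(s)$ and inside the integral, and then bound the nonlinear integral via Gagliardo--Nirenberg and the $X_{t_0}$ norm --- is the same as the paper's. (The paper uses Cauchy--Schwarz in $\tau$ and the $L^2([0,t_0];H^1)$ norm rather than keeping the exponential weight inside the integral and bounding $\|\nabla u(\tau)\|_{L^2}$ pointwise, but this is a cosmetic difference and both produce an additive remainder of order $\nu\,\widetilde t_{0,1}^{(3-p)/2}$ times a fixed norm of $u$.) You are right to flag that this remainder is \emph{not} proportional to $\|u_{\notparallel}(s)\|_{L^2}$, so absorbing it into the right-hand side of \eqref{20210521eq301} with a dimension-only constant needs a uniform lower bound on $\|u_{\notparallel}(s)\|_{L^2}$ over $[0,\widetilde t_{0,1}]$; the paper simply states ``the desired claim then follows by letting $\widetilde t_{0,1}$ be sufficiently small'' and does not spell this out.

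However, the fix you propose has a genuine flaw. The claim that \eqref{20210518eq14} ``guarantees that $\|u_{\notparallel}(0)\|_{L^2}$ is comparable to $\|u_0\|_{L^2}$'' is not correct: by orthogonality $\|u_0\|_{L^2}^2 = \|\langle u_0\rangle\|_{L^2_{x_2}}^2 + \|u_{\notparallel}(0)\|_{L^2}^2$, so if $\|u_0\|_{L^2}$ is itself comparable to the smallness threshold in \eqref{20210518eq15}, $\|u_{\notparallel}(0)\|_{L^2}$ can be much smaller than $\|u_0\|_{L^2}$, or even zero. Consequently your absorption criterion would force $\widetilde t_{0,1}$ to depend on $\|u_{\notparallel}(0)\|_{L^2}$ itself, not merely on the quantities listed in the lemma, and the degenerate case $u_{\notparallel}(0)=0$ (in which the target inequality becomes $\|u_{\notparallel}(t)\|_{L^2}\lesssim 0$ and must be argued by a separate propagation-of-zero argument using \eqref{modeqnotpar}) is left untreated. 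So, while your route is the paper's route and you have correctly located the one step that is glossed over, the closing argument you supply does not quite close it as stated; it needs either a hypothesis guaranteeing $\|u_{\notparallel}(0)\|_{L^2}>0$ with an allowed lower bound, or a weakening of the claimed dependencies of $\widetilde t_{0,1}$.
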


\begin{proof}
Taking $L^2$ norm on both sides of \eqref{20210512eq201}, we have
\begin{eqnarray*}
\|u_{\notparallel}(t)\|_{L^2}%
&\lesssim& \|\calS_{t-s}(u_{\notparallel}(s))\|_{L^2}+\nu (t-s)^{\frac{1}{2}} \left(\int_s^t \|u(\tau)\|_{L^{2p}}^{2p} d\tau\right)^{\frac{1}{2}} \\
&\lesssim& e^{-\lambda_\nu(t-s)}\|u_{\notparallel}(s)\|_{L^2}+ \nu (t-s)^{\frac{3-p}{2}} \cdot \|u\|_{X_{t_0}} \|u\|_{L^2([0, t_0], H^1)}^{p-1}, 
\end{eqnarray*}
where in the last inequality, we have used the estimate \eqref{20210512eq01} and the proof of \eqref{20210428eq10} with $N=2$, respectively. The desired claim then follows by letting $\widetilde{t}_{0, 1}$ be sufficiently small. 
\end{proof}

\begin{lem}
There exists a sufficiently small time $0<\widetilde{t}_{0, 2} \le t_0$, such that for any $0 \le s<t \le \widetilde{t}_{0, 2}$, there holds
\begin{equation} \label{20210531eq02}
\nu \int_s^t \|\nabla u_{\notparallel}(\tau) \|^2_{L^2} d\tau \lesssim \|u_{\notparallel}(s)\|_{L^2}^2.
\end{equation} 
Here, $\widetilde{t}_{0, 2}$ can depend on $\nu$, $p$, $\|u\|_{X_{t_0}}$, $\|u\|_{L^2([0, t_0]; H^1)}$, and the implicit constant in \eqref{20210531eq02} only depends on the dimension. 
\end{lem}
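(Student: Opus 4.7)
My plan is to derive an energy identity for $u_{\notparallel}$ on $[s,t]$, exploit a zero-$x_1$-mean cancellation in the nonlinear term to extract a $|u_{\notparallel}|^2$ weight, and then use the previous lemma to absorb the remaining piece into $\|u_{\notparallel}(s)\|_{L^2}^2$. Testing \eqref{eqnotpar} against $u_{\notparallel}$ and integrating on $[s,t]\times\T^2$, the advection term drops (since $v_1$ depends only on $x_2$, so $\int v_1\partial_{x_1}(u_{\notparallel}^2/2)\,dx=0$) and the subtracted average $\nu\int_\T|u|^p\,dx_1$ pairs to zero against $u_{\notparallel}$ because $\int_\T u_{\notparallel}\,dx_1\equiv 0$. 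Justifying this rigorously at the level of mild solutions by mimicking the approximation/duality argument used for Theorem~\ref{mainthm04} (now applied to the $x_1$-mean-free projection of \eqref{sheareq}) gives
\begin{equation*}
\tfrac12\|u_{\notparallel}(t)\|_{L^2}^2 + \nu\int_s^t\|\nabla u_{\notparallel}(\tau)\|_{L^2}^2\,d\tau = \tfrac12\|u_{\notparallel}(s)\|_{L^2}^2 + \nu\int_s^t\int_{\T^2} u_{\notparallel}|u|^p\,dx\,d\tau,
\end{equation*}
and dropping the nonnegative $\tfrac12\|u_{\notparallel}(t)\|_{L^2}^2$ leaves the task of controlling the nonlinear integral.

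The key observation is the exact cancellation $\int_{\T^2}u_{\notparallel}|\langle u\rangle|^p\,dx=0$ (by Fubini), which lets me rewrite the nonlinear term as $\int_{\T^2}u_{\notparallel}\bigl(|u|^p-|\langle u\rangle|^p\bigr)\,dx$; the elementary pointwise bound $\bigl||u|^p-|\langle u\rangle|^p\bigr|\lesssim_p|u|^{p-1}|u_{\notparallel}|$ then yields $\bigl|\int u_{\notparallel}|u|^p\,dx\bigr|\lesssim\int|u_{\notparallel}|^2|u|^{p-1}\,dx$. From here I apply H\"older with the conjugate pair $\bigl(\tfrac{p+1}{2},\tfrac{p+1}{p-1}\bigr)$ to get $\|u_{\notparallel}\|_{L^{p+1}}^2\|u\|_{L^{p+1}}^{p-1}$, invoke 2D Gagliardo--Nirenberg on each $L^{p+1}$ norm (both $u$ and $u_{\notparallel}$ have zero mean on $\T^2$, since $\int u_0\,dx=0$ is preserved by the flow), and use Young's inequality to split against $\nu\|\nabla u_{\notparallel}\|_{L^2}^2$, arriving at
\begin{equation*}
\nu\int_s^t\Bigl|\int_{\T^2}u_{\notparallel}|u|^p\,dx\Bigr|\,d\tau \le \tfrac{\nu}{2}\int_s^t\|\nabla u_{\notparallel}\|_{L^2}^2\,d\tau + C\nu\int_s^t\|u_{\notparallel}\|_{L^2}^2\|u\|_{L^2}^{p-1}\|\nabla u\|_{L^2}^{(p-1)^2/2}\,d\tau.
\end{equation*}
Crucially, the second term now carries the desired $\|u_{\notparallel}\|_{L^2}^2$ weight.

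To finish I apply \eqref{20210521eq301} (requiring $\widetilde{t}_{0,2}\le\widetilde{t}_{0,1}$) to pull $\sup_{\tau\in[s,t]}\|u_{\notparallel}(\tau)\|_{L^2}^2\lesssim\|u_{\notparallel}(s)\|_{L^2}^2$ outside, use $\|u\|_{L^2}\le\|u\|_{X_{t_0}}$, and apply H\"older in $\tau$ (legal because $(p-1)^2/2<2$ when $p<2$) to dominate the remaining time integral by $(t-s)^{1-(p-1)^2/4}\|u\|_{X_{t_0}}^{p-1}\|u\|_{L^2([0,t_0];H^1)}^{(p-1)^2/2}$. Choosing $\widetilde{t}_{0,2}$ small in terms of $\nu$, $p$, $\|u\|_{X_{t_0}}$, $\|u\|_{L^2([0,t_0];H^1)}$ shrinks this factor below $\tfrac14$, so the nonlinear piece becomes at most $\tfrac{\nu}{2}\int_s^t\|\nabla u_{\notparallel}\|_{L^2}^2\,d\tau+\tfrac14\|u_{\notparallel}(s)\|_{L^2}^2$, and absorbing the gradient term into the left-hand side of the energy inequality yields \eqref{20210531eq02} with a dimensional constant. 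The main obstacle is the rigorous derivation of the energy identity for $u_{\notparallel}$ at the level of weak/mild solutions, which is not a formal manipulation but a careful adaptation of the Steinberg-style argument of Theorem~\ref{mainthm04} to the projected equation; the essential structural input is the cancellation $\int u_{\notparallel}|\langle u\rangle|^p\,dx=0$, without which one cannot extract the $|u_{\notparallel}|^2$ weight needed to bound the right-hand side by $\|u_{\notparallel}(s)\|_{L^2}^2$ alone.
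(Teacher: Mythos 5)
Your argument is correct in outline but takes a genuinely different route from the paper's. The paper's proof uses the same $u_{\notparallel}$-energy identity (and, like you, observes that the advection term and the subtracted $x_1$-average term pair to zero against $u_{\notparallel}$), but then estimates the nonlinear contribution crudely: $\nu\int_{\T^2} u_{\notparallel}|u|^p\,dx \le 2\nu\|u_{\notparallel}\|_{L^2}\|u\|_{L^{2p}}^p \le \nu\|u_{\notparallel}\|_{L^2}^2 + \nu\|u\|_{L^{2p}}^{2p}$, integrates in time, and invokes the $X_{t_0}$-bounds to arrive at a right-hand side of the form $\tfrac12\|u_{\notparallel}(s)\|_{L^2}^2 + \nu(t-s)\|u\|_{X_{t_0}}^2 + \nu(t-s)^{2-p}\|u\|_{X_{t_0}}^2\|u\|_{L^2([0,t_0];H^1)}^{2(p-1)}$, concluding by taking $\widetilde{t}_{0,2}$ small. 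You instead exploit the cancellation $\int_{\T^2}u_{\notparallel}|\langle u\rangle|^p\,dx=0$ to rewrite the nonlinearity as $\int u_{\notparallel}(|u|^p-|\langle u\rangle|^p)\,dx$ and extract a $|u_{\notparallel}|^2$ weight, then use the preceding lemma to pull out $\|u_{\notparallel}(s)\|_{L^2}^2$. This is exactly the device the paper introduces only later, via the modified equation \eqref{modeqnotpar}, in the proof of Proposition~\ref{best02prop}. Your route is actually the tighter one: it keeps the nonlinear contribution proportional to $\|u_{\notparallel}(s)\|_{L^2}^2$ throughout, so the implicit constant is genuinely dimension-only as the lemma claims, whereas the paper's extra terms $\nu(t-s)\|u\|_{X_{t_0}}^2$ etc.\ are not manifestly dominated by $\|u_{\notparallel}(s)\|_{L^2}^2$ with a dimensional constant once $\|u_{\notparallel}(s)\|_{L^2}$ is small.

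Two small corrections. First, the pointwise bound $\bigl||u|^p-|\langle u\rangle|^p\bigr|\lesssim_p|u|^{p-1}|u_{\notparallel}|$ is not quite right, since $|\langle u\rangle|$ need not be dominated by $|u|$. Lemma~\ref{20210520lem10} gives $\bigl||u|^p-|\langle u\rangle|^p\bigr|\lesssim_p|u_{\notparallel}|\bigl(|u_{\notparallel}|^{p-1}+|\langle u\rangle|^{p-1}\bigr)$, producing an additional term $\int|u_{\notparallel}|^{p+1}\,dx$ after pairing with $u_{\notparallel}$; that term is handled by the same Gagliardo--Nirenberg/Young/H\"older chain (it is the easier of the two, and is precisely the term \eqref{20210520eq20} in Proposition~\ref{best02prop}), so your conclusion stands. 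Second, the cancellation of the non-local (subtracted average) term against $u_{\notparallel}$ is not a ``cancellation in the nonlinear term'' you need to justify separately from Theorem~\ref{mainthm04} --- the paper derives the $u_{\notparallel}$-energy identity directly from the already-established weak-solution framework and the identity $\int_\T u_{\notparallel}\,dx_1=0$, so there is no new regularity obstacle at this step.
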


\begin{proof}
By the energy identity corresponding to $u_{\notparallel}$, we see that for any $0 \le \tau \le t_0$, 
\begin{eqnarray} \label{20210519eq11}
&&\frac{1}{2} \frac{d}{dt} \|u_{\notparallel}(\tau)\|_{L^2}^2+\nu \|\nabla u_{\notparallel}(\tau)\|_{L^2}^2 \nonumber \\
&& \quad \quad \quad \quad = \nu \int_{\T^2} u_{\notparallel}(\tau) \left|\langle u \rangle(\tau)+u_{\notparallel}(\tau)\right|^pdx_1dx_2 \nonumber \\
&& \quad \quad \quad \quad \quad \quad \quad \quad \quad \quad - \nu \int_{\T} \left[\int_{\T} \left|\langle u \rangle(\tau)+u_{\notparallel}(\tau) \right|^p dx_1 \cdot \int_{\T} u_{\notparallel}(\tau) dx_1 \right] dx_2 \nonumber \\
&& \quad \quad \quad \quad \le 2\nu \|u_{\notparallel}(\tau)\|_{L^2} \cdot \|u(\tau)\|_{L^{2p}}^{p} \le \nu \left\|u_{\notparallel}(\tau)\right\|_{L^2}^2+\nu \|u(\tau)\|_{L^{2p}}^{2p}. 
\end{eqnarray}
Integrating the above estimate with respect to the interval $[s, t]$, we have
\begin{eqnarray*}
&& \frac{\|u_{\notparallel}(t)\|_{L^2}^2}{2}+\nu \int_s^t \|\nabla u_{\notparallel} (\tau) \|_{L^2}^2 d\tau \\
&& \quad \quad \quad  \le \frac{\|u_{\notparallel}(s)\|_{L^2}^2}{2}+\nu \int_s^t \left( \left\|u_{\notparallel}(\tau)\right\|_{L^2}^2+ \|u(\tau)\|_{L^{2p}}^{2p} \right) d\tau\\
&& \quad \quad \quad  \lesssim \frac{\|u_{\notparallel}(s)\|_{L^2}^2}{2}+\nu (t-s) \|u\|_{X_{t_0}}^2+\nu (t-s)^{2-p} \cdot \|u\|_{X_{t_0}}^2 \|u\|_{L^2([0, t_0], H^1)}^{2(p-1)},
\end{eqnarray*}
where in the last estimate, we have used the proof of \eqref{20210428eq10} with $N=2$ again. Similarly, the desired estimate \eqref{20210531eq02} holds if $\widetilde{t}_{0, 2}$ is sufficiently small. 
\end{proof}

The estimates \eqref{20210521eq301} and \eqref{20210531eq02} suggests that we can make the following \emph{bootstrap assumptions}: for any $\nu>0$ and $0 \le s \le t \le \widetilde{t}_0$,  
\begin{enumerate}
    \item [(1).] $\|u_{\notparallel}(t)\|_{L^2} \le 20e^{-\frac{\lambda_\nu(t-s)}{4}} \|u_{\notparallel}(s)\|_{L^2}$; 
    
    \medskip
    
    \item [(2).] $\nu \int_s^t \|\nabla u_{\notparallel}(\tau)\|_{L^2}^2 d\tau \le 10 \|u_{\notparallel}(s)\|_{L^2}^2$. 
\end{enumerate}
Here $\widetilde{t}_0>0$ is the maximal time such that the estimates above hold on $[0, \widetilde{t}_0]$.

\begin{rem}
The exact values of the coefficients in the above assumption are not important, while what is important for us is that once such a choice is fixed,  then if we assume $\nu$ is sufficiently small, such an assumption can improve itself by replacing the coefficients $20$ and $10$, respectively, into smaller ones, which, together with Corollary \ref{cor01}, will ``force" $t_0$ to take the value $\infty$. 
\end{rem}

\medskip
\subsection{Uniform bounds of $\langle u \rangle$} In the third part of this section, we are aiming to show the terms 
\begin{equation} \label{20210518eq13}
\left\|\langle u \rangle (t) \right\|_{L_{x_2}^2}^2 \quad \textrm{and} \quad \nu \int_0^t \left\|\partial_{x_2} \langle u \rangle(\tau) \right\|_{L_{x_2}^2}^2 d\tau
\end{equation}
can be bounded above uniformly for all $t \in [0, \widetilde{t}_0]$, under the bootstrap assumption and sufficiently small $\nu$. 

 Before we proceed, we would like include a comparison between our approach and the one in \cite{CDFM21} by Colti Zeltati, Dolce, Feng and Mazzucato, in which, they considered the two-dimensional Kuramoto-Sivashinsky equation with a shear flow. Their study of the average part of the solution along the $x_2$-direction (namely, let $\psi=\partial_{x_2}\langle \phi \rangle$) reduces to study the following $1$-dimensional ODE
$$
\partial_t \psi+\frac{\nu}{2} \int_\T \partial_{x_2} \left|\nabla \phi_{\notparallel}\right|^2dx_1+\nu \psi \partial_{x_2} \psi+\nu \partial_{x_2}^4 \psi+\nu \partial_{x_2}^2 \psi=0,
$$
whose energy estimate has the form:
\begin{equation} \label{20210517eq01}
\frac{1}{2} \frac{d}{dt} \|\psi\|_{L_{x_2}^2}^2+\nu \|\partial_{x_2}^2 \psi\|^2_{L_{x_2}^2}=\nu \|\partial_{x_2} \psi\|_{L_{x_2}^2}^2+\frac{\nu}{2} \int_{\T} \left| \nabla \phi_{\notparallel} \right|^2 \partial_{x_2} \psi dx_1.
\end{equation} 
One crucial step to handle the above estimate is to use the one-dimensional Poincar\'e's inequality to absorb term $\|\partial_{x_2} \psi\|_{L_{x_2}^2}^2$ by $\|\partial_{x_2}^2 \psi\|_{L_{x_2}^2}^2$, and this leads to a uniform bound of the term $\|\psi\|_{L_{x_2}^2}^2$ by the initial data (see, \cite[Section 2]{CDFM21}). To our best knowledge, this approach cleverly uses the full strength of the structure of the Kuramoto-Sivashinsky equation, however, it might not work for other situation, for example, if the term $\partial_{x_2}^2 \psi$ is replaced by $\psi \partial_{x_2}^2 \psi$. 

A similar situation also happens for the energy estimate of equation \eqref{eqpar}, in which and heuristically, the term $\nu \|\partial_{x_2} \psi\|_{L_{x_2}^2}^2$ is replaced by $\nu \left\| \langle u \rangle \right\|_{L_{x_2}^{2p}}^{2p}$, which suggests that the Poincar\'e technique might not work directly. Here we overcome this difficulty by applying a version of \emph{small energy method} to shear flows, and our method is motivated by \cite[Section 5]{SJM07}. 

We now turn to the details and we begin with the following prior estimate. 

\begin{lem} [A prior estimate] \label{20210517prop011}
Assume the bootstrap assumption and moreover, there exists a positive number $0<T \le \widetilde{t}_0$, such that
\begin{equation} \label{20210518eq04}
\int_\T \langle u_0\rangle (x_2) dx_2=0
\end{equation} 
and 
\begin{equation} \label{20210518eq01}
\| \langle u \rangle (t,\cdot)\|_{L_{x_2}^2} \le K, \quad \textrm{for all} \quad t \in [0, T], 
\end{equation}
where 
\begin{equation} \label{20210518eq03}
K \le \left( \frac{\lambda_1}{4C_p} \right)^{\frac{5-p}{4(p-1)}}.
\end{equation} 
Here, $\lambda_1$ is the smallest positive eigvenvalue of $-\Delta$ on $\T$ and $C_p$ is an absolute constant which only depends on $p$. 

Then the following estimate holds
\begin{equation} \label{20210517eqpri}
\| \langle u \rangle (T, \cdot )\|_{L_{x_2}^2} ^2 \le \exp \left(C_{p, \nu} \|u_{\notparallel}(0)\|_{L^2}^p \right) \cdot  \left[\left\| \langle u \rangle(0, \cdot) \right\|_{L_{x_2}^2}^2+C_{p, \nu} \|u_{\notparallel}(0)\|_{L^2}^p \right],
\end{equation} 
where
$$
C_{p, \nu}:=20C_p \cdot 10^{\frac{p-1}{2}} \left[2(3-p) \right]^{\frac{3-p}{2}} \cdot \left(\frac{\nu}{\lambda_\nu} \right)^{\frac{3-p}{2}}. 
$$
\end{lem}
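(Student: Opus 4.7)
The plan is to perform an energy estimate on equation \eqref{eqpar}, then recast it as a Gronwall-type inequality in $\|\langle u \rangle\|_{L^2_{x_2}}^2$ whose forcing is controlled by the bootstrap assumptions on $u_{\notparallel}$. First, I would observe that the zero-mean condition \eqref{20210518eq04} is propagated in time: integrating \eqref{eqpar} over $\T$ in $x_2$, the right-hand side vanishes, so $\int_\T \langle u\rangle(t,\cdot)\,dx_2 \equiv 0$. Pairing \eqref{eqpar} with $\langle u \rangle$ in $L^2_{x_2}$ and using this mean-zero condition eliminates the subtraction term, yielding
\begin{equation*}
\tfrac{1}{2}\tfrac{d}{dt}\|\langle u\rangle\|_{L_{x_2}^2}^2 + \nu \|\partial_{x_2}\langle u\rangle\|_{L_{x_2}^2}^2 = \nu \int_{\T^2} \langle u\rangle\, |\langle u\rangle + u_{\notparallel}|^p\, dx_1dx_2.
\end{equation*}

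Next, I would bound the right-hand side using $|a+b|^p \le 2^{p-1}(|a|^p + |b|^p)$ with $a=\langle u\rangle$ and $b=u_{\notparallel}$. Since $\langle u\rangle$ is independent of $x_1$, the pure $\langle u\rangle$ piece simplifies to $\|\langle u\rangle\|_{L^{p+1}(\T_{x_2})}^{p+1}$, which the 1D Gagliardo--Nirenberg inequality on mean-zero functions controls by $C_p \|\langle u\rangle\|_{L^2}^{(p+3)/2}\|\partial_{x_2}\langle u\rangle\|_{L^2}^{(p-1)/2}$. For the cross piece, two applications of Cauchy--Schwarz (first in $x_2$, then in $x_1$) give $\int_{\T^2}|\langle u\rangle||u_{\notparallel}|^p \le \|\langle u\rangle\|_{L_{x_2}^2}\|u_{\notparallel}\|_{L^{2p}(\T^2)}^p$, after which 2D Gagliardo--Nirenberg on $u_{\notparallel}$ (whose $x_1$-mean vanishes, so Poincar\'e applies) gives $\|u_{\notparallel}\|_{L^{2p}(\T^2)}^p \le C_p \|\nabla u_{\notparallel}\|_{L^2}^{p-1}\|u_{\notparallel}\|_{L^2}$. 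I then apply Young's inequality to absorb $\|\partial_{x_2}\langle u\rangle\|^{(p-1)/2}$ into a fraction of the dissipation, leaving a residual of the form $C_p\nu\|\langle u\rangle\|^{2(p+3)/(5-p)}$; by Poincar\'e ($\|\partial_{x_2}\langle u\rangle\|^2 \ge \lambda_1 \|\langle u\rangle\|^2$) and the smallness hypothesis $K^{4(p-1)/(5-p)} \le \lambda_1/(4C_p)$ this residual is absorbed into $\tfrac{\nu\lambda_1}{4}\|\langle u\rangle\|^2$. What survives is the differential inequality
\begin{equation*}
\tfrac{d}{dt}\|\langle u\rangle\|^2_{L_{x_2}^2} + \tfrac{\nu\lambda_1}{2}\|\langle u\rangle\|^2_{L_{x_2}^2} \le 2C_p\,\nu\, \|\langle u\rangle\|_{L_{x_2}^2}\|\nabla u_{\notparallel}\|_{L^2}^{p-1}\|u_{\notparallel}\|_{L^2}.
\end{equation*}

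To extract the claimed $\exp(\cdot)[\|\langle u\rangle(0)\|^2 + \cdot]$ structure with the power $\|u_{\notparallel}(0)\|^p$ (rather than the naive $\|u_{\notparallel}(0)\|^{2p}$ one would get by Young on the cross term), I would use the linearization $2\|\langle u\rangle\|_{L^2} \le 1 + \|\langle u\rangle\|_{L^2}^2$ to split the right-hand side into a part linear in $\|\langle u\rangle\|^2$ (acting as a Gronwall coefficient) and a part independent of it (acting as a source). Dropping the favorable $-\nu\lambda_1/2$ in the resulting Gronwall exponential yields
\begin{equation*}
\|\langle u\rangle(T)\|^2 \le \exp\!\left(\int_0^T C_p\nu \|\nabla u_{\notparallel}\|^{p-1}\|u_{\notparallel}\|\, d\tau\right)\!\left[\|\langle u\rangle(0)\|^2 + \int_0^T C_p\nu \|\nabla u_{\notparallel}\|^{p-1}\|u_{\notparallel}\|\, d\tau\right].
\end{equation*}

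Finally, I would estimate the time integral by H\"older with the conjugate pair $(\tfrac{2}{p-1},\tfrac{2}{3-p})$: bootstrap (2) gives $(\int \|\nabla u_{\notparallel}\|^2 d\tau)^{(p-1)/2} \le (10\|u_{\notparallel}(0)\|^2/\nu)^{(p-1)/2}$, while bootstrap (1) combined with the exponential decay $\|u_{\notparallel}(\tau)\|^2 \le 400\,e^{-\lambda_\nu \tau/2}\|u_{\notparallel}(0)\|^2$ produces, after integration, $(\int \|u_{\notparallel}\|^{2/(3-p)} d\tau)^{(3-p)/2} \le 20\,[2(3-p)/\lambda_\nu]^{(3-p)/2}\|u_{\notparallel}(0)\|$. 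Multiplying through (using $1-(p-1)/2 = (3-p)/2$) recovers exactly the constant $C_{p,\nu}$ of the lemma. The main obstacle is the last step's bookkeeping: the power $\|u_{\notparallel}(0)\|^p$ only appears if one avoids squaring the cross term, which is why the linearization $2\|a\|\le 1+\|a\|^2$ in step three and the specific H\"older pair $(\tfrac{2}{p-1},\tfrac{2}{3-p})$ here are essential rather than incidental.
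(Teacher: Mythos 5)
Your proposal is correct and follows essentially the same route as the paper's proof: energy identity, splitting the nonlinearity into a self-interaction piece absorbed by Poincar\'e plus smallness and a cross piece, the linearization $\|\langle u\rangle\|\le 1+\|\langle u\rangle\|^2$ to set up a Gronwall inequality, and then H\"older with the conjugate pair $\left(\tfrac{2}{p-1},\tfrac{2}{3-p}\right)$ combined with both bootstrap bounds. The only cosmetic difference is in the last step, where the paper first pulls out $\|u_{\notparallel}(0)\|$ via bootstrap (1) and then applies H\"older to $\|\nabla u_{\notparallel}\|^{p-1}e^{-\lambda_\nu\tau/4}$, whereas you apply H\"older directly to $\|\nabla u_{\notparallel}\|^{p-1}\|u_{\notparallel}\|$ and invoke the two bootstrap estimates on the separate factors; both computations yield the same constant $C_{p,\nu}$.
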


\begin{rem} \label{20210518rem01}
Note that $C_{p, \nu}$ is increasing with respect to $\nu$ and also
$$
\lim_{\nu \to 0} C_{p, \nu}=0.
$$
These properties are clear from the definition of $\lambda_\nu$ (see, \eqref{20210512eq01}). 
\end{rem}

\begin{proof} [Proof of Lemma \ref{20210517prop011}.]
We start with commenting that the constant $C_p$ used in the proof of this result might differ from line by line, which, however, will depend only on $p$. Next observe that
\begin{equation} \label{20210522eq01}
\int_\T \langle u \rangle (t, x_2)dx_2=0 \quad \textrm{for any} \quad t \ge 0.
\end{equation} 
Indeed, by \eqref{20210518eq04}, we know $u_0$ has mean zero on $\T^2$, which implies $u(t, \cdot)$ has mean zero on $\T^2$ for any $t \ge 0$. 

Using the energy identity of $\langle u \rangle$, we find that
\begin{eqnarray} \label{20210518eq02}
&&\frac{1}{2} \cdot \frac{d}{dt} \int_\T |\langle u \rangle|^2 dx_2+\nu \int_\T \left| \partial_{x_2} \langle u \rangle \right|^2dx_2 = \nu \int_{\T^2} \langle u \rangle (x_2) \left| \langle u \rangle+u_{\notparallel} \right|^p dx_1dx_2 \nonumber  \\
&& \quad \quad \quad \le C_p \nu \int_{\T^2} \left| \langle u \rangle(x_2) \right| \left( \left| \langle u \rangle \right(x_2)|^p+ \left|u_{\notparallel}(x_1, x_2) \right|^p \right) dx_1dx_2  \nonumber\\
&& \quad \quad \quad =C_p \nu \int_\T |\langle u \rangle|^{p+1} dx_2+C_p \nu \int_{\T^2} \left| \langle u \rangle(x_2) \right| \left| u_{\notparallel} (x_1, x_2) \right|^p dx_1dx_2  \nonumber\\
&& \quad \quad \quad \le C_p \nu \int_\T |\langle u \rangle|^{p+1} dx_2+C_p \nu \|\langle u \rangle \|_{L_{x_2}^2} \left\|u_{\notparallel} \right\|_{L^{2p}}^p \nonumber \\
&& \quad \quad \quad :=A+B. 
\end{eqnarray}

\medskip

\textit{Estimation of $A$.} By the Gagliardo–Nirenberg's inequality, we see that
\begin{eqnarray*}
A%
&=& C_p \nu \left\|\langle u \rangle \right\|_{L_{x_2}^{p+1}}^{p+1} \le C_p \nu \left( \left\| \partial_{x_2}  \langle u\rangle \right\|_{L_{x_2}^2}^{\frac{p-1}{2(p+1)}} \cdot \left\| \langle u \rangle \right\|_{L_{x_2}^2}^{\frac{p+3}{2(p+1)}}\right)^{p+1} \\ 
&=& C_p \nu \left\| \partial_{x_2} \langle u \rangle\right\|_{L_{x_2}^2}^{\frac{p-1}{2}} \left\| \langle u \rangle \right\|_{L_{x_2}^2}^{\frac{p+3}{2}} \le  \frac{\nu \left\|\partial_{x_2} \langle u \rangle \right\|_{L_{x_2}^2}^2}{2}+C_p \nu \cdot \left\|\langle u \rangle \right\|_{L_{x_2}^2}^{\frac{2(p+3)}{5-p}} \\
&=& \frac{\nu \left\|\partial_{x_2} \langle u \rangle \right\|_{L_{x_2}^2}^2}{2}+C_p \nu \cdot \left\|\langle u \rangle \right\|_{L_{x_2}^2}^{2+\frac{4(p-1)}{5-p}} \\
&\le& \frac{\nu \left\|\partial_{x_2} \langle u \rangle \right\|_{L_{x_2}^2}^2}{2}+C_p \nu \cdot K^{\frac{4(p-1)}{5-p}} \left\|\langle u \rangle \right\|_{L_{x_2}^2}^{2},
\end{eqnarray*}
where in the last inequality, we use the assumption \eqref{20210518eq01}. 

\medskip

\textit{Estimation of $B$.} Using the Gagliardo–Nirenberg's inequality again, we obtain that
\begin{eqnarray*}
B%
&\le& C_p \nu \cdot \left\|\langle u \rangle \right\|_{L_{x_2}^2} \left\|\nabla u_{\notparallel} \right\|_{L^2}^{p-1} \cdot \left\| u_{\notparallel} \right\|_{L^2} \\
&\le& C_p \nu \left[\left\|\nabla u_{\notparallel} \right\|_{L^2}^{p-1} \cdot \left\| u_{\notparallel} \right\|_{L^2} +\left\|\nabla u_{\notparallel} \right\|_{L^2}^{p-1} \cdot \left\| u_{\notparallel} \right\|_{L^2}  \|\langle u \rangle\|_{L_{x_2}^2}^2 \right]. 
\end{eqnarray*}

\medskip

Combining both estimates of $A$ and $B$, \eqref{20210518eq02} reduces to
\begin{eqnarray} \label{20210518eq05}
&&\frac{1}{2}\frac{d}{dt} \int_\T \left| \langle u \rangle(x_2) \right|^2 dx_2+\frac{\nu}{2} \int_\T \left| \partial_{x_2} \langle u \rangle(x_2) \right|^2 dx_2 \le C_p \nu K^{\frac{4(p-1)}{5-p}} \|\langle u \rangle \|_{L_{x_2}^2}^2 \nonumber \\
&& \quad \quad \quad \quad \quad \quad \quad  +C_p \nu \left[\left\|\nabla u_{\notparallel} \right\|_{L^2}^{p-1} \cdot \left\| u_{\notparallel} \right\|_{L^2} +\left\|\nabla u_{\notparallel} \right\|_{L^2}^{p-1} \cdot \left\| u_{\notparallel} \right\|_{L^2}  \|\langle u \rangle\|_{L_{x_2}^2}^2 \right]. 
\end{eqnarray}

Hence, by the one-dimensional Poincar\'e's inequality and  \eqref{20210518eq03}, we have 
$$
C_p \nu K^{\frac{4(p-1)}{5-p}} \| \langle u \rangle\|_{L_{x_2}^2}^2  \le C_p \nu  \cdot \frac{\lambda_1}{4C_p} \left\| \langle u \rangle \right\|_{L_{x_2}^2}^2 \le \frac{\nu}{4} \cdot \left\| \partial_{x_2} \langle u \rangle \right\|_{L_{x_2}^2}^2,
$$
where we recall that $\lambda_1$ is the smallest eigenvalue of $-\Delta$ on $\T$. Therefore, we can further reduce \eqref{20210518eq05} by 
\begin{eqnarray}  \label{20210518eq53}
&&\frac{1}{2}\frac{d}{dt} \int_\T \left| \langle u \rangle(x_2) \right|^2 dx_2+\frac{\nu}{4} \int_\T \left| \partial_{x_2} \langle u \rangle(x_2) \right|^2 dx_2 \nonumber \\
&& \quad \quad \quad \quad \quad \quad \quad  \le C_p \nu \left[\left\|\nabla u_{\notparallel} \right\|_{L^2}^{p-1} \cdot \left\| u_{\notparallel} \right\|_{L^2} +\left\|\nabla u_{\notparallel} \right\|_{L^2}^{p-1} \cdot \left\| u_{\notparallel} \right\|_{L^2}  \|\langle u \rangle\|_{L_{x_2}^2}^2 \right], 
\end{eqnarray}
which gives the ODE
\begin{equation} \label{20210518eq10}
\frac{1}{2}\frac{d}{dt} \left\| \langle u \rangle \right\|_{L_{x_2}^2}^2 \le C_p \nu \left[\left\|\nabla u_{\notparallel} \right\|_{L^2}^{p-1} \cdot \left\| u_{\notparallel} \right\|_{L^2} +\left\|\nabla u_{\notparallel} \right\|_{L^2}^{p-1} \cdot \left\| u_{\notparallel} \right\|_{L^2}  \|\langle u \rangle\|_{L_{x_2}^2}^2 \right]. 
\end{equation} 
Let 
$$
\rho(t):=\exp \left(-C_p \nu \int_0^t \left\|\nabla u_{\notparallel}(\tau) \right\|_{L^2}^{p-1} \|u_{\notparallel}(\tau) \|_{L^2} d\tau \right)
$$
be the integrating factor, and solving the ODE \eqref{20210518eq10}, we have 
\begin{eqnarray} \label{20210518eq11}
&&\int_\T \left|\langle u \rangle (T, x_2) \right|^2 dx_2 \le \frac{1}{\rho(T)} \int_\T \left|\langle u \rangle(0, x_2) \right|^2 dx_2 \nonumber  \\
&& \quad \quad \quad \quad \quad \quad \quad \quad \quad \quad \quad \quad +\frac{C_p \nu}{\rho(T)} \int_0^T \rho(\tau) \left\| \nabla u_{\notparallel} (\tau) \right\|_{L^2}^{p-1} \left\|u_{\notparallel}(\tau) \right\|_{L^2} d\tau.
\end{eqnarray}
By the bootstrap assumption, we estimate $\frac{1}{\rho(T)}$ as follows:
\begin{eqnarray} \label{20210518eq60}
\frac{1}{\rho(T)}%
&=& \exp\left(C_p \nu \int_0^T \left\|\nabla u_{\notparallel}(\tau) \right\|_{L^2}^{p-1} \left\|u_{\notparallel}(\tau) \right\|_{L^2} d\tau \right) \nonumber \\
&\le& \exp \left(20 C_p \nu \cdot \|u_{\notparallel}(0)\|_{L^2}  \int_0^T \left\|\nabla u_{\notparallel}(\tau) \right\|_{L^2}^{p-1} \cdot e^{-\frac{\lambda_\nu \tau}{4}} d\tau  \right).
\end{eqnarray}
Applying H\"older's inequality, we see that 
\begin{eqnarray*}
&&\int_0^T \left\|\nabla u_{\notparallel}(\tau) \right\|_{L^2}^{p-1}  e^{-\frac{\lambda_\nu \tau}{4}} d\tau \le \left(\int_0^T \left\|\nabla u_{\notparallel}(\tau) \right\|_{L^2}^2 d\tau \right)^{\frac{p-1}{2}} \left( \int_0^T e^{\frac{-\lambda_\nu \tau}{2(3-p)}}d\tau \right)^{\frac{3-p}{2}} \\
&& \quad \quad \quad \quad \quad  =\left( \nu \int_0^T \left\|\nabla u_{\notparallel}(\tau) \right\|_{L^2}^2 d\tau \right)^{\frac{p-1}{2}} \cdot \nu^{\frac{1-p}{2}} \cdot \left( \int_0^T e^{\frac{-\lambda_\nu \tau}{2(3-p)}}d\tau \right)^{\frac{3-p}{2}} \\
&&  \quad \quad \quad \quad \quad \le \nu^{\frac{1-p}{2}} \cdot 10^{\frac{p-1}{2}} \left\|u_{\notparallel}(0)\right\|_{L^2}^{p-1} \cdot \frac{\left[2(3-p) \right]^{\frac{3-p}{2}}}{\lambda_\nu^{\frac{3-p}{2}}}.
\end{eqnarray*}
Therefore, 
\begin{eqnarray*}
\frac{1}{\rho(T)}%
&\le& \exp \left(20 C_p \nu \cdot \|u_{\notparallel}(0)\|_{L^2} \cdot  \nu^{\frac{1-p}{2}} \cdot 10^{\frac{p-1}{2}} \left\|u_{\notparallel}(0)\right\|_{L^2}^{p-1} \cdot \frac{\left[2(3-p) \right]^{\frac{3-p}{2}}}{\lambda_\nu^{\frac{3-p}{2}}}\right)\\
&=& \exp \left(20C_p \cdot 10^{\frac{p-1}{2}} \cdot \left[2(3-p) \right]^{\frac{3-p}{2}} \cdot \left(\frac{\nu}{\lambda_\nu} \right)^{\frac{3-p}{2}} \cdot \left\|u_{\notparallel}(0) \right\|_{L^2}^p \right)\\
&=& \exp \left(C_{p, \nu} \left\|u_{\notparallel}(0) \right\|_{L^2}^p \right). 
\end{eqnarray*}
This estimate together with \eqref{20210518eq11} yields
\begin{eqnarray*}
\left\|\langle u \rangle(T) \right\|_{L_{x_2}^2}^2%
&\le& \exp \left(C_{p, \nu} \left\|u_{\notparallel}(0) \right\|_{L^2}^p \right) \\
&& \quad \quad \quad \quad  \cdot \left[ \left\| \langle u \rangle(0) \right\|_{L_{x_2}^2}^2 +C_p \nu \int_0^T \left\|\nabla u_{\notparallel}(\tau) \right\|_{L^2}^{p-1} \left\|u_{\notparallel}(\tau) \right\|_{L^2} d\tau \right] \\
&& \le \exp\left(C_{p, \nu} \left\|u_{\notparallel}(0) \right\|_{L^2}^p \right) \cdot \left[ \left\|\langle u \rangle(0) \right\|^2_{L_{x_2}^2}+C_{p, \nu} \|u_{\notparallel}(0) \|_{L^2}^p \right].
\end{eqnarray*}
The desired result is proved. 
\end{proof}

We are now ready to bound the two terms in \eqref{20210518eq13} uniformly for all $t \in [0, \widetilde{t}_0]$. Here is the main result.

\begin{prop} \label{20210519prop01}
Assume the bootstrap assumption. Then there exists a $\widetilde{\nu}>0$, which only depends on $p$ and $\|u_{\notparallel}(0)\|_{L^2}$, such that for any $0<\nu<\widetilde{\nu}$ and for any initial data $\langle u \rangle (0, \cdot)$ of \eqref{eqpar} with mean zero and satisfying  
\begin{equation} \label{20210518eq15}
\left\| \langle u \rangle(0, \cdot) \right\|_{L_{x_2}^2} \le \frac{1}{4} \cdot \left( \frac{\lambda_1}{4C_p} \right)^{\frac{5-p}{4(p-1)}}
\end{equation}
the mild solution of \eqref{eqpar} satisfies the following estimates: for any $0 \le t \le \widetilde{t}_0$, 
\begin{enumerate}
    \item [(1).] 
    \begin{equation} \label{20210518eq40}
    \left\|\langle u \rangle(t, \cdot) \right\|_{L_{x_2}^2} \le\frac{1}{2} \cdot \left( \frac{\lambda_1}{4C_p}\right)^{\frac{5-p}{4(p-1)}};
    \end{equation} 
    \item [(2).] 
  \begin{equation} \label{20210518eq401}
    \nu \int_0^t \left\| \partial_{x_2} \langle u \rangle (\tau, \cdot) \right\|_{L_{x_2}^2}^2 d\tau \le 4+4\left( \frac{\lambda_1}{4C_p}\right)^{\frac{5-p}{2(p-1)}}. 
  \end{equation} 
\end{enumerate}
Here, $\lambda_1$ and $C_p$ are the constants defined as in Lemma \ref{20210517prop011}.
\end{prop}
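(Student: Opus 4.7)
Set $K := (\lambda_1/(4C_p))^{(5-p)/(4(p-1))}$, so the hypothesis \eqref{20210518eq15} reads $\|\langle u\rangle(0,\cdot)\|_{L^2_{x_2}} \le K/4$ while the target \eqref{20210518eq40} is the improved bound $\|\langle u\rangle(t,\cdot)\|_{L^2_{x_2}} \le K/2$. The whole argument will be a standard continuity (bootstrap) argument powered by Lemma~\ref{20210517prop011}. Define
$$
T^{\sharp} := \sup\bigl\{T \in [0,\widetilde{t}_0] : \|\langle u\rangle(t,\cdot)\|_{L^2_{x_2}} \le K \text{ on } [0,T]\bigr\}.
$$
Continuity of $t \mapsto \|\langle u\rangle(t,\cdot)\|_{L^2_{x_2}}$ (inherited from $u \in C([0,\widetilde{t}_0];L^2(\T^2))$) together with the strict initial bound $K/4 < K$ gives $T^{\sharp} > 0$. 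On $[0,T^{\sharp}]$ the hypotheses of Lemma~\ref{20210517prop011} hold: \eqref{20210518eq01} is the defining property of $T^\sharp$, and \eqref{20210518eq04} is precisely the mean-zero requirement on $u_0$, which is preserved under the $x_1$-average. Hence the a priori estimate \eqref{20210517eqpri} is in force on $[0,T^\sharp]$.

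To close the bootstrap I will invoke Remark~\ref{20210518rem01}, namely $C_{p,\nu}\to 0$ as $\nu\to 0$, and choose $\widetilde{\nu} = \widetilde{\nu}(p,\|u_{\notparallel}(0)\|_{L^2})$ small enough that both $\exp\bigl(C_{p,\nu}\|u_{\notparallel}(0)\|_{L^2}^p\bigr) \le 2$ and $C_{p,\nu}\|u_{\notparallel}(0)\|_{L^2}^p \le K^2/16$. Then for every $T \in [0,T^{\sharp}]$ and every $0<\nu<\widetilde{\nu}$, estimate \eqref{20210517eqpri} combined with $\|\langle u\rangle(0,\cdot)\|_{L^2_{x_2}}^2 \le K^2/16$ yields
$$
\|\langle u\rangle(T,\cdot)\|_{L^2_{x_2}}^2 \le 2\bigl(K^2/16 + K^2/16\bigr) = K^2/4.
$$
Thus $\|\langle u\rangle(T,\cdot)\|_{L^2_{x_2}} \le K/2 < K$ throughout $[0,T^\sharp]$, which by continuity rules out $T^{\sharp} < \widetilde{t}_0$, and \eqref{20210518eq40} follows.

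For the integrated bound \eqref{20210518eq401}, I will return to the differential inequality \eqref{20210518eq53} already established inside the proof of Lemma~\ref{20210517prop011}, integrate it over $[0,t]$, drop the nonnegative term $\tfrac12\|\langle u\rangle(t,\cdot)\|_{L^2_{x_2}}^2$ from the left, and substitute the pointwise bound from part~(1) on the right. This produces an inequality of the shape
$$
\nu\!\int_0^t\!\|\partial_{x_2}\langle u\rangle(\tau,\cdot)\|_{L^2_{x_2}}^2\,d\tau \lesssim \|\langle u\rangle(0,\cdot)\|_{L^2_{x_2}}^2 + \nu(1+K^2)\!\int_0^t\!\|\nabla u_{\notparallel}(\tau)\|_{L^2}^{p-1}\|u_{\notparallel}(\tau)\|_{L^2}\,d\tau.
$$
The remaining time integral is exactly the one controlled in \eqref{20210518eq60} via Hölder in $\tau$ together with the bootstrap bounds $\nu\!\int\!\|\nabla u_{\notparallel}\|_{L^2}^2 \le 10\|u_{\notparallel}(0)\|_{L^2}^2$ and the exponential decay of $\|u_{\notparallel}\|_{L^2}$; the resulting expression is proportional to $(\nu/\lambda_\nu)^{(3-p)/2}\|u_{\notparallel}(0)\|_{L^2}^p$, which tends to $0$ with $\nu$ by Remark~\ref{20210518rem01}. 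Shrinking $\widetilde{\nu}$ once more if necessary, the right-hand side is dominated by $4 + 4K^2 = 4 + 4(\lambda_1/(4C_p))^{(5-p)/(2(p-1))}$, proving \eqref{20210518eq401}. I anticipate no essential obstacle here: the real analytic work is concentrated in Lemma~\ref{20210517prop011}, and this proposition amounts to its bootstrap closure plus an integrated energy bookkeeping.
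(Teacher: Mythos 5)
Your argument is correct and follows essentially the same route as the paper: choose $\widetilde{\nu}$ via Remark~\ref{20210518rem01} so that Lemma~\ref{20210517prop011}'s a priori bound contracts, close the continuity/bootstrap argument for~(1), then integrate~\eqref{20210518eq53} with the pointwise bound from~(1) and the same Hölder-in-time estimate from~\eqref{20210518eq60} for~(2). The only cosmetic difference is that you run the continuity argument at the threshold $K=(\lambda_1/4C_p)^{(5-p)/(4(p-1))}$ while the paper uses $K/2$; also, the paper does not need to shrink $\widetilde{\nu}$ again for part~(2), since the choice $\exp(C_{p,\nu}\|u_{\notparallel}(0)\|_{L^2}^p)<2$ already forces $C_{p,\nu}\|u_{\notparallel}(0)\|_{L^2}^p<\ln 2$, which is small enough.
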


\begin{proof}
(1). Recall from Remark \ref{20210518rem01} that $\lim\limits_{\nu \to 0} C_{p, \nu}=0$, this allows us to choose a $\widetilde{\nu}$ sufficiently small (but only depends on $p$ and $\|u_{\notparallel}(0)\|_{L^2}$, such that for any $0<\nu<\widetilde{\nu}$, we have 
\begin{equation} \label{20210518eq54}
\exp\left(C_{p, \nu} \|u_{\notparallel}(0)\|_{L^2}^p \right)<2 
\end{equation} 
and
$$
C_{p, \nu} \exp\left(C_{p, \nu} \|u_{\notparallel}(0)\|_{L^2}^p \right) \|u_{\notparallel}(0)\|_{L^2}^p< \frac{1}{8} \cdot \left(\frac{\lambda_1}{4C_p} \right)^{\frac{5-p}{2(p-1)}}. 
$$
Let
$$
K:=\frac{1}{2} \cdot \left( \frac{\lambda_1}{4C_p}\right)^{\frac{5-p}{4(p-1)}}
$$
in Lemma \ref{20210517prop011} and $T$ be the maximal time such that $\left\|\langle u \rangle(t) \right\|_{L_{x_2}^2} \le K$ on $[0, T]$. By the continuity of $L^2$ norm of the mild solution, $T>0$. Our goal is to show that $T=\widetilde{t}_0$. Otherwise, assume $0<T<\widetilde{t}_0$. By Lemma \ref{20210517prop011} and \eqref{20210518eq15}, this means
\begin{eqnarray*}
\frac{1}{4} \cdot \left(\frac{\lambda_1}{4C_p} \right)^{\frac{5-p}{2(p-1)}}%
&=& K^2=\left\| \langle u \rangle (T) \right\|_{L_{x_2}^2}^2 \\
&\le&   \exp\left(C_{p, \nu} \|u_{\notparallel}(0)\|_{L^2}^p \right) \cdot \left[\left\| \langle u \rangle(0, \cdot) \right\|_{L_{x_2}^2}^2+C_{p, \nu} \|u_{\notparallel}(0)\|_{L^2}^p \right] \\
&<& \frac{2}{16} \cdot \left(\frac{\lambda_1}{4C_p} \right)^{\frac{5-p}{2(p-1)}}+\frac{1}{8} \cdot \left(\frac{\lambda_1}{4C_p} \right)^{\frac{5-p}{2(p-1)}} \\
&=& \frac{1}{4} \cdot \left(\frac{\lambda_1}{4C_p} \right)^{\frac{5-p}{2(p-1)}},
\end{eqnarray*}
which is a contradiction. This means that \eqref{20210518eq40} has to be true until the maximal time, that is $T=\widetilde{t}_0$. 

\medskip

(2). As a consequence of \eqref{20210518eq40}, we see that the condition \eqref{20210518eq01} in Lemma \ref{20210517prop011} is satisfied, therefore,  \eqref{20210518eq53} is valid, that is, 
\begin{eqnarray*}  
&&\frac{1}{2}\frac{d}{dt} \int_\T \left| \langle u \rangle \right|^2 dx_2+\frac{\nu}{4} \int_\T \left| \partial_{x_2} \langle u \rangle \right|^2 dx_2 \nonumber \\
&& \quad \quad \quad \quad \quad \quad \quad  \le C_p \nu \left[\left\|\nabla u_{\notparallel} \right\|_{L^2}^{p-1} \cdot \left\| u_{\notparallel} \right\|_{L^2} +\left\|\nabla u_{\notparallel} \right\|_{L^2}^{p-1} \cdot \left\| u_{\notparallel} \right\|_{L^2}  \|\langle u \rangle\|_{L_{x_2}^2}^2 \right]. 
\end{eqnarray*}
Integrating the above equation with respect to the interval $[0, t]$, we find that 
\begin{eqnarray*} 
&&\frac{\nu}{4} \int_0^T \left\|\partial_{x_2} \langle u \rangle (\tau) \right\|_{L_{x_2}^2}^2 d\tau  \le \frac{1}{2} \int_{\T} \left| \langle u \rangle (0, x_2) \right|^2 dx_2 \nonumber \\
&& \quad \quad \quad \quad   \quad  \quad \quad \quad \quad \quad \quad +C_p \nu \int_0^t \left\|\nabla u_{\notparallel} (\tau)\right\|_{L^2}^{p-1} \left\|u_{\notparallel} (\tau) \right\|_{L^2} d\tau \nonumber \\
&& \quad \quad \quad \quad  \quad  \quad \quad \quad \quad \quad \quad + C_p \nu \int_0^t \left\|\nabla u_{\notparallel} (\tau)\right\|_{L^2}^{p-1} \left\|u_{\notparallel} (\tau) \right\|_{L^2} \left\|\langle u \rangle (\tau, \cdot) \right\|_{L_{x_2}^2}^2 d\tau. 
\end{eqnarray*}
Now by \eqref{20210518eq15} and \eqref{20210518eq40}, we can further bound the above right hand side by
$$
\left[1+\frac{1}{4} \cdot \left(\frac{\lambda_1}{4C_p} \right)^{\frac{5-p}{2(p-1)}} \right] \cdot C_p \nu \int_0^t \left\|\nabla u_{\notparallel} (\tau)\right\|_{L^2}^{p-1} \left\|u_{\notparallel} (\tau) \right\|_{L^2} d\tau+\frac{1}{32} \cdot \left(\frac{\lambda_1}{4C_p} \right)^{\frac{5-p}{2(p-1)}}. 
$$
To this end, we control the integral in the above term by following exactly the same steps as in \eqref{20210518eq60} to see that 
\begin{eqnarray*}
\frac{\nu}{4} \int_0^T \left\|\partial_{x_2} \langle u \rangle (\tau) \right\|_{L_{x_2}^2}^2 d\tau %
&\le& \left[1+\frac{1}{4} \cdot \left(\frac{\lambda_1}{4C_p} \right)^{\frac{5-p}{2(p-1)}} \right] \cdot C_{p, \nu} \|u_{\notparallel}(0)\|_{L_{x_2}^2}^p \\
&& + \frac{1}{32} \cdot \left(\frac{\lambda_1}{4C_p} \right)^{\frac{5-p}{2(p-1)}}  \\
&\le& \ln 2 \cdot \left[1+\frac{1}{4} \cdot \left(\frac{\lambda_1}{4C_p} \right)^{\frac{5-p}{2(p-1)}} \right] + \frac{1}{32} \cdot \left(\frac{\lambda_1}{4C_p} \right)^{\frac{5-p}{2(p-1)}},
\end{eqnarray*}
which clearly implies the desired estimate \eqref{20210518eq401}. Here in the last estimate, we have used the estimate \eqref{20210518eq54}. The proof is complete. 
\end{proof}

\medskip
\subsection{Bootstrap estimates} The next step is to show that for sufficiently small $\nu$, the bootstrap assumption can be self-improved. More precisely, we will show that under the assumption of Theorem \ref{mainthm03} (or, \eqref{20210518eq15}) and the bootstrap assumption, we have for $\nu$ sufficiently small and $0 \le s \le t \le \widetilde{t}_0$, 
\begin{enumerate}
    \item [(1).] $\|u_{\notparallel}(t)\|_{L^2} \le 15e^{-\frac{\lambda_\nu(t-s)}{4}} \|u_{\notparallel}(s)\|_{L^2}$; 
    
    \medskip
    
    \item [(2).] $\nu \int_s^t \|\nabla u_{\notparallel}(\tau)\|_{L^2}^2 d\tau \le 5 \|u_{\notparallel}(s)\|_{L^2}^2$. 
\end{enumerate}
The above estimates are referred as the \emph{bootstrap estimates}.

\medskip

Let us include some motivation before we proceed. Heuristically, Proposition \ref{20210519prop01} suggests that the $L_{x_2}^2$-norm of $\langle u \rangle (t, \cdot)$ behaves like a ``constant" for large $t$, while $\|u_{\notparallel}(t)\|_{L^2}$ possesses an exponential decay as $t$ increases. This suggests to prove the refined bootstrap estimates, we need to ``eliminate" the contribution of the term $\langle u \rangle$. More precisely, for $u_{\notparallel}$, instead of working directly on the energy estimate of \eqref{eqnotpar}, we modify it a little bit and study the corresponding energy identity:
\begin{eqnarray} \label{modeqnotpar}
&& \partial_t u_{\notparallel}+v_1(x_2) \partial_{x_1} u_{\notparallel}-\nu \Delta u_{\notparallel}= \nu \left( \left| \langle u \rangle+u_{\notparallel} \right|^p-\left| \langle u \rangle \right|^p \right) \nonumber \\
&& \quad \quad \quad \quad \quad \quad \quad \quad \quad \quad \quad \quad \quad \quad \quad \quad -\nu \int_{\T} \left( \left| \langle u \rangle+u_{\notparallel} \right|^p-\left| \langle u \rangle \right|^p \right) dx_1.
\end{eqnarray}

We need the following simple lemma, which plays an important role to ``cancel" the contribution $\langle u \rangle$ in our coming proof. 

\begin{lem} \label{20210520lem10} 
For any $a, b \in \R$ and $p \ge 1$, 
\begin{equation} \label{20210522eq02}
\left|(a+b)^p-b^p \right| \le (p-1) \max\left\{1, 2^{p-2} \right\} \cdot |a| \cdot \left(|a|^{p-1}+|b|^{p-1} \right).
\end{equation} 
\end{lem}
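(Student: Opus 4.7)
The plan is to reduce the inequality to a one-line application of the fundamental theorem of calculus, followed by a case split on $p$ relative to $2$. Concretely, since for $p\ge 1$ the map $s\mapsto |s|^p$ is absolutely continuous on $\R$ with a.e.\ derivative $p|s|^{p-1}\operatorname{sgn}(s)$, I would write
$$|a+b|^p-|b|^p \;=\; a\int_{0}^{1} p\,|b+ta|^{p-1}\,\operatorname{sgn}(b+ta)\,dt,$$
so that the triangle inequality together with $|b+ta|\le |a|+|b|$ yields
$$\bigl||a+b|^p-|b|^p\bigr|\;\le\; p\,|a|\,(|a|+|b|)^{p-1}.$$

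The next step is to trade the sum $(|a|+|b|)^{p-1}$ for $|a|^{p-1}+|b|^{p-1}$, and here the behaviour of $x\mapsto x^{p-1}$ on $[0,\infty)$ forces a dichotomy. When $p\ge 2$, $x\mapsto x^{p-1}$ is convex, so Jensen's inequality gives
$$(|a|+|b|)^{p-1}=2^{p-1}\Bigl(\tfrac{|a|+|b|}{2}\Bigr)^{p-1}\le 2^{p-2}\bigl(|a|^{p-1}+|b|^{p-1}\bigr).$$
When $1\le p\le 2$, the exponent $p-1\in[0,1]$ and the same map is subadditive, which gives $(|a|+|b|)^{p-1}\le |a|^{p-1}+|b|^{p-1}$. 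Combining these two cases with the integral bound above produces the desired estimate, with the prefactor taking the form $c(p)\max\{1,2^{p-2}\}$ coming directly from the two-case split.

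I do not expect any real obstacle. The only mildly delicate point is the differentiation of $|s|^p$ across $s=0$ when $p\in[1,2)$, where the derivative is unbounded if $b+ta=0$ for some $t$; however $|b+ta|^{p-1}$ is still locally integrable on $[0,1]$ for $p\ge 1$, so the integral representation is valid and the bound $|b+ta|^{p-1}\le(|a|+|b|)^{p-1}$ still holds (or may be replaced by separating the cases $a,b$ of like and opposite signs if one prefers to avoid the singular integrand). This is the cleanest route and matches the algebraic form of the stated inequality up to the explicit constant, which is all that is needed for its downstream use in controlling $||\langle u\rangle+u_{\notparallel}|^p-|\langle u\rangle|^p|$ in the modified energy equation \eqref{modeqnotpar}.
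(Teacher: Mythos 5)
Your argument is correct, and it follows essentially the same route as the paper: the paper simply cites the mean value theorem together with the power inequality $(|a|+|b|)^{p-1}\le \max\{1,2^{p-2}\}\,(|a|^{p-1}+|b|^{p-1})$, which is exactly what your integral representation plus the two-case split on $p$ packages in slightly more detail. (Incidentally, the ``delicate point'' you flag about differentiating $|s|^p$ across $s=0$ is not actually an issue for $p\ge1$: the a.e.\ derivative $p|s|^{p-1}\operatorname{sgn}(s)$ is bounded near $s=0$ since $p-1\ge 0$; there is no singular integrand.)

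Your careful bookkeeping does surface one genuine discrepancy worth recording. Your derivation produces
\begin{equation*}
\bigl||a+b|^p-|b|^p\bigr|\;\le\; p\,\max\{1,2^{p-2}\}\,|a|\,\bigl(|a|^{p-1}+|b|^{p-1}\bigr),
\end{equation*}
with prefactor $p$, whereas the lemma as printed has $(p-1)$ in that position. The printed constant cannot be right: at $p=1$ the right-hand side of \eqref{20210522eq02} vanishes while the left-hand side equals $|a|$, and the auxiliary inequality the paper's proof invokes, $(|a|+|b|)^{p-1}\le(p-1)\max\{1,2^{p-2}\}(|a|^{p-1}+|b|^{p-1})$, already fails for every $p\in[1,2)$ (take $b=0$). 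The correct factor is $p\max\{1,2^{p-2}\}$ as in your computation; this has no effect downstream, since in the proof of Proposition \ref{best02prop} and Lemma \ref{best2lem02} the constant is absorbed into $C_p'$ and $C_p''$. You are also right to read the left-hand side of \eqref{20210522eq02} as $\bigl||a+b|^p-|b|^p\bigr|$ rather than literally $|(a+b)^p-b^p|$, since for non-integer $p$ and negative arguments the latter is not real-valued; the absolute-value reading is the one actually used to control $\bigl||\langle u\rangle+u_{\notparallel}|^p-|\langle u\rangle|^p\bigr|$ in \eqref{modeqnotpar}.
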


\begin{proof}
The proof follows clearly by the mean value theorem and the elementary estimate  
$$
(|a|+|b|)^{p-1} \le (p-1) \max \left\{1, 2^{p-2} \right\} \left(|a|^{p-1}+|b|^{p-1} \right),
$$
and hence we omit it here. 
\end{proof}

We are now ready to prove the second estimate in the bootstrap estimate. 

\begin{prop} \label{best02prop}
Assume the bootstrap assumption, $u_0\in L^2_0$, \eqref{20210518eq14b} and \eqref{20210518eq15}. Then there exists a $\nu_1>0$ which only depends on $p$ and $\|u_{\notparallel}(0)\|_{L^2}$, such that for any $0<\nu<\nu_1$ and any $0 \le s \le t \le \widetilde{t}_0$, 
\begin{equation} \label{best02}
\nu \int_s^t \|\nabla u_{\notparallel}(\tau)\|_{L^2}^2 d\tau \le 5 \|u_{\notparallel}(s)\|_{L^2}^2.
\end{equation} 
\end{prop}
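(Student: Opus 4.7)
The plan is to derive the energy identity from \eqref{modeqnotpar}, exploit the cancellation created by subtracting $|\langle u\rangle|^p$ from the nonlinearity, and then close the estimate by combining the bootstrap exponential decay of $u_\notparallel$ with the a priori bounds on $\langle u\rangle$ supplied by Proposition \ref{20210519prop01}. Multiplying \eqref{modeqnotpar} by $u_\notparallel$ and integrating over $\T^2$, the advection integral vanishes because $v_1(x_2)u_\notparallel\partial_{x_1}u_\notparallel$ is a perfect $x_1$-derivative (the coefficient is independent of $x_1$), and the non-local correction vanishes because $\int_\T u_\notparallel\,dx_1 = 0$. This leaves
$$
\tfrac{1}{2}\tfrac{d}{dt}\|u_\notparallel\|_{L^2}^2 + \nu\|\nabla u_\notparallel\|_{L^2}^2 = \nu\int_{\T^2} u_\notparallel\bigl(|\langle u\rangle + u_\notparallel|^p - |\langle u\rangle|^p\bigr)\,dx.
$$
Applying Lemma \ref{20210520lem10} pointwise with $a=u_\notparallel$, $b=\langle u\rangle$ bounds the right-hand side by $C_p\nu\|u_\notparallel\|_{L^{p+1}}^{p+1} + C_p\nu\int_{\T^2} |u_\notparallel|^2|\langle u\rangle|^{p-1}\,dx$.

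I would bound the first summand using 2D Gagliardo--Nirenberg for the mean-zero function $u_\notparallel$, namely $\|u_\notparallel\|_{L^{p+1}}^{p+1}\lesssim \|\nabla u_\notparallel\|_{L^2}^{p-1}\|u_\notparallel\|_{L^2}^2$, followed by Young's inequality with exponent pair $\bigl(\tfrac{2}{p-1},\tfrac{2}{3-p}\bigr)$ to absorb $\tfrac{\nu}{4}\|\nabla u_\notparallel\|_{L^2}^2$ into the left, leaving a remainder $C(p)\nu\|u_\notparallel\|_{L^2}^{4/(3-p)}$. For the mixed summand I would use that $\langle u\rangle$ depends only on $x_2$ and is mean-zero in $x_2$ (by \eqref{20210522eq01}), so 1D Sobolev gives $\|\langle u\rangle\|_{L^\infty_{x_2}}^{p-1}\lesssim \|\langle u\rangle\|_{L^2_{x_2}}^{(p-1)/2}\|\partial_{x_2}\langle u\rangle\|_{L^2_{x_2}}^{(p-1)/2}$, and pulling this factor out of the $\T^2$-integral yields $C_p\nu\|\langle u\rangle\|_{L^2_{x_2}}^{(p-1)/2}\|\partial_{x_2}\langle u\rangle\|_{L^2_{x_2}}^{(p-1)/2}\|u_\notparallel\|_{L^2}^2$.

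Integrating from $s$ to $t$ produces
$$
\nu\int_s^t\|\nabla u_\notparallel(\tau)\|_{L^2}^2\,d\tau \;\le\; \|u_\notparallel(s)\|_{L^2}^2 + I_1 + I_2,
$$
where $I_1 = 2C(p)\nu\int_s^t\|u_\notparallel(\tau)\|_{L^2}^{4/(3-p)}d\tau$ and $I_2$ is the time-integrated mixed term. For $I_1$, plugging in the bootstrap decay $\|u_\notparallel(\tau)\|_{L^2}\le 20 e^{-\lambda_\nu(\tau-s)/4}\|u_\notparallel(s)\|_{L^2}$ and integrating the resulting exponential gives a bound of the form $C(p,\|u_\notparallel(0)\|_{L^2})(\nu/\lambda_\nu)\|u_\notparallel(s)\|_{L^2}^2$, after exchanging the excess power $4/(3-p)-2=(2p-2)/(3-p)>0$ of $\|u_\notparallel(s)\|_{L^2}$ for $20\|u_\notparallel(0)\|_{L^2}$. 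For $I_2$, Proposition \ref{20210519prop01}(1) bounds $\|\langle u\rangle\|_{L^2_{x_2}}^{(p-1)/2}$ by a constant, the bootstrap decay handles $\|u_\notparallel(\tau)\|_{L^2}^2$, and H\"older in $\tau$ with exponent pair $\bigl(\tfrac{4}{p-1},\tfrac{4}{5-p}\bigr)$ against the exponential weight reduces the remaining factor to $\bigl(\int_s^t\|\partial_{x_2}\langle u\rangle\|_{L^2_{x_2}}^2\,d\tau\bigr)^{(p-1)/4}$ times a power of $1/\lambda_\nu$; Proposition \ref{20210519prop01}(2) bounds the first factor by $C/\nu$, producing $I_2\le C(p)(\nu/\lambda_\nu)^{(5-p)/4}\|u_\notparallel(s)\|_{L^2}^2$.

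The main obstacle is tracking these $\nu$-dependencies through the enhanced dissipation scale $\lambda_\nu = c_0\nu^{2/(m+2)}$ in \eqref{20210512eq01}: one needs $\nu/\lambda_\nu = c_0^{-1}\nu^{m/(m+2)}\to 0$ as $\nu\to 0$, which is guaranteed by $m\ge 2$. Both $I_1$ and $I_2$ then carry strictly positive powers of $\nu$, and choosing $\nu_1$ sufficiently small, depending only on $p$ and $\|u_\notparallel(0)\|_{L^2}$, makes each of $I_1,I_2$ at most $2\|u_\notparallel(s)\|_{L^2}^2$. Adding the three contributions closes the estimate at $\|u_\notparallel(s)\|_{L^2}^2+2\|u_\notparallel(s)\|_{L^2}^2+2\|u_\notparallel(s)\|_{L^2}^2=5\|u_\notparallel(s)\|_{L^2}^2$, which is precisely \eqref{best02}.
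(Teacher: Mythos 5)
Your proof is correct and rests on exactly the same structural pillars as the paper's: the energy identity for the modified equation \eqref{modeqnotpar} so that Lemma \ref{20210520lem10} gives the cancellation $|u_\notparallel|(|\langle u\rangle|^{p-1}+|u_\notparallel|^{p-1})$, the bootstrap exponential decay of $\|u_\notparallel\|_{L^2}$, the uniform bounds from Proposition \ref{20210519prop01}, and the observation that $\nu/\lambda_\nu = c_0^{-1}\nu^{m/(m+2)}\to 0$. Where you diverge is in how the two error integrals are processed. For the pure $\|u_\notparallel\|_{L^{p+1}}^{p+1}$ term, the paper keeps the $\|\nabla u_\notparallel\|_{L^2}^{p-1}$ factor, applies H\"older in $\tau$ with exponent $\tfrac{2}{p-1}$, and invokes the \emph{second} bootstrap inequality $\nu\int\|\nabla u_\notparallel\|_{L^2}^2\le 10\|u_\notparallel(s)\|_{L^2}^2$, producing $(\nu/\lambda_\nu)^{(3-p)/2}$; you instead absorb $\tfrac{\nu}{4}\|\nabla u_\notparallel\|_{L^2}^2$ by Young's inequality, leaving a clean $\|u_\notparallel\|_{L^2}^{4/(3-p)}$ that is controlled by the first bootstrap inequality alone, producing $(\nu/\lambda_\nu)^1$. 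For the mixed term, the paper uses Minkowski/H\"older in $x_2$ to arrive at $\|\langle u\rangle\|_{L^{p+1}_{x_2}}^{p-1}\|u_\notparallel\|_{L^{p+1}}^2$, then Gagliardo--Nirenberg on \emph{both} factors and a three-way H\"older in $\tau$ with triple $(\tfrac{p+1}{p-1},\tfrac{4(p+1)}{(p-1)^2},D_p)$, giving $(\nu/\lambda_\nu)^{1/D_p}$; you instead pull out $\|\langle u\rangle\|_{L^\infty_{x_2}}^{p-1}$ via the 1D Agmon inequality for mean-zero functions, keep $\|u_\notparallel\|_{L^2}^2$, and close with a two-way H\"older, giving $(\nu/\lambda_\nu)^{(5-p)/4}$. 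Both routes produce positive powers of $\nu/\lambda_\nu$, which is the only thing the final smallness argument needs; your version is arguably a bit shorter. One small bookkeeping caveat: after absorbing $\tfrac{\nu}{4}\|\nabla u_\notparallel\|_{L^2}^2$, the left side carries $\tfrac{3\nu}{4}$, so integrating and multiplying by $\tfrac{4}{3}$ leaves $\tfrac{2}{3}\|u_\notparallel(s)\|_{L^2}^2 + \tfrac{4}{3}(I_1+I_2)$ rather than your displayed $\|u_\notparallel(s)\|_{L^2}^2 + I_1 + I_2$; since $I_1,I_2\to 0$ as $\nu\to 0$, this changes nothing about the final conclusion, but your $1+2+2=5$ tally should be replaced by an appropriate bound with those factors tracked (or one can absorb a smaller fraction, say $\tfrac{\nu}{100}\|\nabla u_\notparallel\|_{L^2}^2$). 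This is a cosmetic issue, not a gap.
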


\begin{proof}
We begin with commenting that the constant $C_p'$ in the proof of this lemma might change from line by line, which, however, will only depend on $p$. By the energy identity corresponding to \eqref{modeqnotpar}, we have
\begin{eqnarray} \label{20210520eq01} 
&& \frac{1}{2} \frac{d}{dt} \|u_{\notparallel}\|_{L^2}^2+\nu \left\|\nabla u_{\notparallel} \right\|_{L^2}^2=\nu \int_{\T^2} u_{\notparallel}  \left( \left| \langle u \rangle+u_{\notparallel} \right|^p-\left| \langle u \rangle \right|^p \right) dx_1dx_2 \nonumber\\
&& \quad \quad \quad \quad \le \nu \int_{\T^2} \left|u_{\notparallel}\right| \left| \left| \langle u \rangle+u_{\notparallel} \right|^p-\left| \langle u \rangle \right|^p \right| dx_1dx_2 \nonumber\\
&& \quad \quad \quad \quad \le C_p' \nu \int_{\T^2} \left|u_{\notparallel}\right|^2 \left(\left|\langle u \rangle\right|^{p-1}+\left|u_{\notparallel}\right|^{p-1} \right) dx_1dx_2 \nonumber \\
&& \quad \quad \quad \quad \le C_p' \nu \int_{\T^2} \left|u_{\notparallel} \right|^{p+1} dx_1dx_2+C_p' \nu \int_{\T^2} |u_{\notparallel}|^2 \left| \langle u \rangle \right|^{p-1} dx_1dx_2, 
\end{eqnarray}
where in the first estimate, we have used the fact that
$$
\int_\T u_{\notparallel}(t, x_1, x_2)dx_1=0. 
$$
For the second integral in the \eqref{20210520eq01}, we further bound it as 
\begin{eqnarray*}
\int_{\T^2} \left|u_{\notparallel} \right|^2 \left| \langle u \rangle \right|^{p-1} dx_1dx_2 %
&=& \int_\T \left[ \left|\langle u \rangle \right|^{p-1} \cdot \int_\T  \left|u_{\notparallel} \right|^2 dx_1 \right] dx_2 \\
&\le& \left( \int_\T \left| \langle u \rangle \right|^{p+1} dx_2 \right)^{\frac{p-1}{p+1}} \cdot \left( \int_\T \left| \int_\T |u_{\notparallel}|^2 dx_1 \right|^{\frac{p+1}{2}} dx_2 \right)^{\frac{2}{p+1}} \\
&\le& \|\langle u \rangle\|_{L_{x_2}^{p+1}}^{p-1} \cdot \|u_{\notparallel} \|_{L^{p+1}}^2, 
\end{eqnarray*}
where in the last estimate follows from the Minkowski's inequality. Therefore, combining the above estimates yields
\begin{equation} \label{20210520eq100}
\frac{1}{2} \frac{d}{dt} \|u_{\notparallel}\|_{L^2}^2+\nu \left\|\nabla u_{\notparallel} \right\|_{L^2}^2 \le C_p' \nu \left\|u_{\notparallel} \right\|_{L^{p+1}}^2 \left[ \left\|\langle u \rangle \right\|_{L_{x_2}^{p+1}}^{p-1}+\left\|u_{\notparallel}\right\|_{L^{p+1}}^{p-1} \right]. 
\end{equation} 
Integrating the above estimate with respect to the interval $[s, t]$, we get
\begin{eqnarray} \label{20210520eq02}
\nu \int_s^t \left\|\nabla u_{\notparallel}(\tau) \right\|_{L^2}^2 d\tau%
&\le& \frac{\|u_{\notparallel}(s)\|_{L^2}^2}{2}+ C_p'\nu \int_s^t\left\|u_{\notparallel} (\tau)\right\|_{L^{p+1}}^{p+1} d\tau  \nonumber \\
&& \quad \quad \quad \quad \quad \quad  + C_p' \nu \int_s^t \left\|u_{\notparallel} (\tau)\right\|_{L^{p+1}}^2 \left\|\langle u \rangle (\tau) \right\|_{L_{x_2}^{p+1}}^{p-1}d\tau.
\end{eqnarray}
Now we use Gagliardo–Nirenberg's inequality, the bootstrap assumption and Proposition \ref{20210519prop01} to estimate the two integrals in \eqref{20210520eq02} . More precisely, for the first integral
\begin{eqnarray} \label{20210520eq20} 
&&\int_s^t\left\|u_{\notparallel} (\tau)\right\|_{L^{p+1}}^{p+1} d\tau \le \int_s^t \left( \left\|\nabla u_{\notparallel}(\tau) \right\|_{L^2}^{\frac{p-1}{p+1}} \cdot \left\|u_{\notparallel}(\tau) \right\|_{L^2}^{\frac{2}{p+1}} \right)^{p+1} d\tau \nonumber \\
&& \quad \quad \quad \quad = C_p' \|u_{\notparallel}(s)\|_{L^2}^2 \cdot \int_s^t e^{-\frac{\lambda_\nu(\tau-s)}{2}} \cdot  \left\|\nabla u_{\notparallel}(\tau) \right\|_{L^2}^{p-1} d\tau \nonumber\\
&&\quad \quad \quad \quad \le C_p'\|u_{\notparallel} (s)\|_{L^2}^2 \cdot \left(\int_s^t e^{-\frac{\lambda_\nu(\tau-s)}{3-p}} d\tau \right)^{\frac{3-p}{2}} \nonumber \\
&& \quad \quad \quad \quad \quad \quad \quad \quad \quad  \quad \quad \quad \quad \cdot \nu^{\frac{1-p}{2}} \cdot \left( \nu \int_s^t \left\| \nabla u_{\notparallel}(\tau) \right\|_{L^2}^2 d\tau \right)^{\frac{p-1}{2}} \nonumber\\
&& \quad \quad \quad \quad \le  \frac{C_p'\nu^{\frac{1-p}{2}}}{\lambda_\nu^{\frac{3-p}{2}}} \cdot \|u_{\notparallel}(s)\|_{L^2}^{p-1}  \|u_{\notparallel}(s)\|_{L^2}^2  \le \frac{C_p' \nu^{\frac{1-p}{2}}}{\lambda_\nu^{\frac{3-p}{2}}} \cdot  \|u_{\notparallel}(0)\|_{L^2}^{p-1}  \|u_{\notparallel}(s)\|_{L^2}^2,
\end{eqnarray} 
where in the last estimate, we have used the first estimate of the bootstrap assumption on the interval $[0, s]$. Next we estimate the second integral in \eqref{20210520eq02}. To begin with, we let 
\begin{equation} \label{20210520eq40}
\left(\frac{p+1}{p-1}, \frac{4(p+1)}{(p-1)^2}, D_p \right)
\end{equation} 
be a H\"older-triple, that is $\frac{p-1}{p+1}+\frac{(p-1)^2}{4(p+1)}+\frac{1}{D_p}=1$. 
Here, the value of $D_p$, which is $\frac{4(p+1)}{8-(p-1)^2}$ but it is not particularly important, and all we need to know is that $D_p>1$. Now for the second integral, we have
\begin{eqnarray*}
&&\int_s^t \left\|u_{\notparallel} (\tau)\right\|_{L^{p+1}}^2 \left\|\langle u \rangle (\tau) \right\|_{L_{x_2}^{p+1}}^{p-1}d\tau \\
&& \quad \quad \le \int_s^t \left(\left\|\nabla u_{\notparallel}(\tau) \right\|_{L^2}^{\frac{p-1}{p+1}} \left\|u_{\notparallel}(\tau) \right\|_{L^2}^{\frac{2}{p+1}} \right)^2 \\
&& \quad \quad \quad \quad \quad \quad \quad \quad \quad \quad \quad \cdot \left(\left\|\partial_{x_2} \langle u \rangle(\tau) \right\|_{L_{x_2}^2}^{\frac{p-1}{2(p+1)}} \left\| \langle u \rangle(\tau) \right\|_{L_{x_2}^2}^{\frac{p+3}{2(p+1)}} \right)^{p-1} d\tau \\
&& \quad \quad=\int_s^t \left\|\nabla u_{\notparallel}(\tau) \right\|_{L^2}^{\frac{2(p-1)}{p+1}} \left\|u_{\notparallel}(\tau) \right\|_{L^2}^{\frac{4}{p+1}}  \left\|\partial_{x_2} \langle u \rangle(\tau) \right\|_{L_{x_2}^2}^{\frac{(p-1)^2}{2(p+1)}} \left\| \langle u \rangle(\tau) \right\|_{L_{x_2}^2}^{\frac{(p+3)(p-1)}{2(p+1)}}  d\tau \\
&& \quad \quad \le C_p' \int_s^t \left\|\nabla u_{\notparallel}(\tau) \right\|_{L^2}^{\frac{2(p-1)}{p+1}} \left\|u_{\notparallel}(\tau) \right\|_{L^2}^{\frac{4}{p+1}}  \left\|\partial_{x_2} \langle u \rangle(\tau) \right\|_{L_{x_2}^2}^{\frac{(p-1)^2}{2(p+1)}}  d\tau \\
&& \quad \quad \le C_p' \cdot \left\|u_{\notparallel}(s) \right\|_{L^2}^{\frac{4}{p+1}} \cdot \int_s^t \left\|\nabla u_{\notparallel}(\tau) \right\|_{L^2}^{\frac{2(p-1)}{p+1}} \left\|\partial_{x_2} \langle u \rangle(\tau) \right\|_{L_{x_2}^2}^{\frac{(p-1)^2}{2(p+1)}}  e^{-\frac{\lambda_\nu (\tau-s)}{p+1}} d\tau. 
\end{eqnarray*} 
Applying the H\"older's inequality with the triple in \eqref{20210520eq40}, we bound the integral in the last line above by 
$$
\left(\int_s^t \left\| \nabla u_{\notparallel}(\tau) \right\|_{L^2}^2 d\tau \right)^{\frac{p-1}{p+1}} \cdot \left( \int_s^t \left\| \partial_{x_2} \langle u \rangle (\tau) \right\|_{L_{x_2}^2}^2 d\tau \right)^{\frac{(p-1)^2}{4(p+1)}} \cdot \left(\int_s^t e^{-\frac{\lambda_\nu D_p(\tau-s)}{p+1}} d\tau \right)^{\frac{1}{D_p}},
$$
which, by the bootstrap assumption and Proposition \ref{20210519prop01}, can be further controlled by 
\begin{eqnarray*}
&& \frac{C_p' \nu^{-1+\frac{1}{D_p}}}{\lambda_{\nu}^{\frac{1}{D_p}}} \left( \nu \int_s^t \left\|\nabla u_{\notparallel}(\tau) \right\|_{L^2}^2 d\tau \right)^{\frac{p-1}{p+1}} \cdot  \left( \nu \int_s^t \left\| \partial_{x_2} \langle u \rangle (\tau) \right\|_{L_{x_2}^2}^2 d\tau \right)^{\frac{(p-1)^2}{4(p+1)}} \\
&& \quad \quad \quad \quad \quad   \le \frac{C_p' \nu^{-1+\frac{1}{D_p}}}{\lambda_{\nu}^{\frac{1}{D_p}}} \cdot \left\|u_{\notparallel}(s) \right\|_{L^2}^{\frac{2(p-1)}{p+1}},
\end{eqnarray*}
which implies 
\begin{equation} \label{20210520eq21} 
\int_s^t \left\|u_{\notparallel} (\tau)\right\|_{L^{p+1}}^2 \left\|\langle u \rangle (\tau) \right\|_{L_{x_2}^{p+1}}^{p-1}d\tau \le \frac{C_p' \nu^{-1+\frac{1}{D_p}}}{\lambda_{\nu}^{\frac{1}{D_p}}} \|u_{\notparallel}(s)\|_{L^2}^2.
\end{equation} 
Finally, combining \eqref{20210520eq02}, \eqref{20210520eq20} and \eqref{20210520eq21}, we get
\begin{eqnarray*}
&& \nu \int_s^t \left\|\nabla u_{\notparallel}(\tau) \right\|_{L^2}^2 d\tau \le \bigg[\frac{1}{2}+2C_p' \left(\frac{\nu}{\lambda_\nu} \right)^{\frac{3-p}{2}} \|u_{\notparallel}(0)\|_{L^2}^{p-1} \\
&& \quad \quad \quad \quad \quad \quad \quad \quad \quad \quad \quad \quad  \quad \quad \quad \quad  +2C_p' \left(\frac{\nu}{\lambda_\nu} \right)^{\frac{1}{D_p}} \bigg] \left\|u_{\notparallel}(s) \right\|^2_{L^2}.
\end{eqnarray*}
This clearly implies the desired estimate \eqref{best02} as long as we choose  $\nu \in \left(0, \nu_1\right)$, where $\nu_1$ satisfies $0<\nu_1<\widetilde{\nu}$ with $\widetilde{\nu}$ being defined in Proposition \ref{20210519prop01} and 
$$
2C_p' \left(\frac{\nu_1}{\lambda_{\nu_1}} \right)^{\frac{3-p}{2}} \|u_{\notparallel}(0)\|_{L^2}^{p-1}+2C_p' \left(\frac{\nu_1}{\lambda_{\nu_1}} \right)^{\frac{1}{D_p}} \le 4.
$$
Note that here we have implicitly used the fact that $\frac{\nu}{\lambda_\nu} \to 0$ as $\nu \to 0$.
\end{proof}

Now we turn to the proof of the first inequality in the bootstrap estimates. The strategy to prove such an improved estimate is to apply iteration. We need the following two lemmas. 

\begin{lem} \label{bestlem01}
Assume the bootstrap assumption, $u_0\in L^2_0$, \eqref{20210518eq14b} and \eqref{20210518eq15}. Then there exists a $\nu_2>0$, which only depends on $p$ and $\|u_{\notparallel}(0)\|_{L^2}$, such that for any $0<\nu<\nu_2$ and any $0 \le s \le t \le \widetilde{t}_0$, 
\begin{equation} \label{20210521eq101}
\left\|u_{\notparallel}(t) \right\|_{L^2} \le \frac{3}{2}  \|u_{\notparallel}(s)\|_{L^2}. 
\end{equation} 
\end{lem}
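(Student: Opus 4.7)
The plan is to recycle the energy identity and the interpolation estimates already established in the proof of Proposition \ref{best02prop}, but this time throwing away the dissipation term in order to close a Gr\"onwall-type bound on $\|u_{\notparallel}(t)\|_{L^2}^2$ itself. The starting point is the estimate \eqref{20210520eq100}, which after dropping the nonnegative term $\nu\|\nabla u_{\notparallel}\|_{L^2}^2$ on the left and integrating over $[s,t]\subseteq[0,\widetilde{t}_0]$ yields
\begin{equation*}
\|u_{\notparallel}(t)\|_{L^2}^2\le \|u_{\notparallel}(s)\|_{L^2}^2+2C_p'\nu\int_s^t\|u_{\notparallel}(\tau)\|_{L^{p+1}}^{p+1}d\tau+2C_p'\nu\int_s^t\|u_{\notparallel}(\tau)\|_{L^{p+1}}^2\|\langle u\rangle(\tau)\|_{L_{x_2}^{p+1}}^{p-1}d\tau.
\end{equation*}

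Next, I will invoke the same two interpolation computations used in the proof of Proposition \ref{best02prop}. Precisely, the estimate \eqref{20210520eq20} gives
\begin{equation*}
\nu\int_s^t\|u_{\notparallel}(\tau)\|_{L^{p+1}}^{p+1}d\tau\le C_p'\left(\frac{\nu}{\lambda_\nu}\right)^{\frac{3-p}{2}}\|u_{\notparallel}(0)\|_{L^2}^{p-1}\|u_{\notparallel}(s)\|_{L^2}^2,
\end{equation*}
while the estimate \eqref{20210520eq21} together with Proposition \ref{20210519prop01} (which controls $\nu\int\|\partial_{x_2}\langle u\rangle\|_{L_{x_2}^2}^2 d\tau$) gives
\begin{equation*}
\nu\int_s^t\|u_{\notparallel}(\tau)\|_{L^{p+1}}^2\|\langle u\rangle(\tau)\|_{L_{x_2}^{p+1}}^{p-1}d\tau\le C_p'\left(\frac{\nu}{\lambda_\nu}\right)^{\frac{1}{D_p}}\|u_{\notparallel}(s)\|_{L^2}^2.
\end{equation*}
Both of these estimates are valid under the bootstrap assumption and the smallness hypothesis \eqref{20210518eq15}, and the derivation is verbatim the one already carried out; no new interpolation or mixing input is required here.

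Combining the three displays above yields
\begin{equation*}
\|u_{\notparallel}(t)\|_{L^2}^2\le \left[1+2C_p'\left(\frac{\nu}{\lambda_\nu}\right)^{\frac{3-p}{2}}\|u_{\notparallel}(0)\|_{L^2}^{p-1}+2C_p'\left(\frac{\nu}{\lambda_\nu}\right)^{\frac{1}{D_p}}\right]\|u_{\notparallel}(s)\|_{L^2}^2.
\end{equation*}
Since $\lambda_\nu=c_0\nu^{\frac{2}{2+m}}$ with $m\ge 2$, one has $\nu/\lambda_\nu\to 0$ as $\nu\to 0$. Hence there exists $\nu_2\in(0,\nu_1)$ (depending only on $p$ and $\|u_{\notparallel}(0)\|_{L^2}$) such that for every $\nu\in(0,\nu_2)$ the bracket is bounded by $\frac{9}{4}$, which yields the desired inequality $\|u_{\notparallel}(t)\|_{L^2}\le\frac{3}{2}\|u_{\notparallel}(s)\|_{L^2}$.

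The main conceptual point (as opposed to a computational obstacle) is the recognition that the same nonlinear interpolation bounds that powered Proposition \ref{best02prop} are already strong enough to absorb the nonlinear contributions into the initial value $\|u_{\notparallel}(s)\|_{L^2}^2$, with a coefficient that tends to zero as $\nu\to 0$. No genuinely new obstacle arises here; the lemma is essentially a bookkeeping consequence of the $\frac{\nu}{\lambda_\nu}\to 0$ mechanism. The exponential decay version of this inequality (the ``15-bound'' stated as the first refined bootstrap estimate) will presumably then be recovered by iterating Lemma \ref{bestlem01} on time windows of length comparable to $\lambda_\nu^{-1}$ and combining it with the dissipation bound from Proposition \ref{best02prop}.
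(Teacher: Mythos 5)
Your proposal is correct and follows essentially the same route as the paper: the paper's own proof of this lemma is the one-liner that one integrates \eqref{20210520eq100} over $[s,t]$, drops the nonnegative dissipation term, and reuses the bounds from the proof of Proposition \ref{best02prop} before choosing $\nu_2$ small. You have simply spelled out the details the paper leaves implicit, including the correct numerology $\left(\frac{3}{2}\right)^2=\frac{9}{4}$.
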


\begin{proof}
The proof of this lemma is immediate from the proof of Proposition \ref{best02prop}. Indeed, the estimate \eqref{20210520eq100} also implies 
\begin{eqnarray*}
\frac{\|u_{\notparallel}(t)\|_{L^2}^2}{2}%
&\le& \frac{\|u_{\notparallel}(s)\|_{L^2}^2}{2}+ 2C_p'\nu \int_s^t\left\|u_{\notparallel} (\tau)\right\|_{L^{p+1}}^{p+1} d\tau  \nonumber \\
&& \quad \quad \quad \quad \quad \quad  + 2C_p' \nu \int_s^t \left\|u_{\notparallel} (\tau)\right\|_{L^{p+1}}^2 \left\|\langle u \rangle (\tau) \right\|_{L_{x_2}^{p+1}}^{p-1}d\tau.
\end{eqnarray*}
The desired claim then follows exactly by the same approach to control the right hand side of the estimate \eqref{20210520eq02}, with a proper choice of $\nu_2$. 
\end{proof}

\begin{lem} \label{best2lem02}
Assume the bootstrap assumption, $u_0\in L^2_0$, \eqref{20210518eq14b} and \eqref{20210518eq15}. Let $\tau^*:=\frac{4}{\lambda_\nu}$. If $\widetilde{t}_0 \ge \tau^*$, then there exists a $\nu_3>0$, which only depends on $p$ and $\|u_{\notparallel}(0)\|_{L^2}$, such that any $s \in \left[0, \widetilde{t}_0-\tau^* \right]$ and any $0<\nu<\nu_3$, one has
\begin{equation} \label{20210521eq102}
\left\|u_{\notparallel}(\tau^*+s) \right\|_{L^2} \le \frac{1}{e} \left\|u_{\notparallel}(s) \right\|_{L^2}. 
\end{equation}
Here we recall that $\widetilde{t}_0$ is defined in the bootstrap assumption, which is the maximal time such that the two estimates in bootstrap assumption hold. 
\end{lem}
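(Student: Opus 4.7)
The plan is to apply the Duhamel formula associated with the \emph{modified} equation \eqref{modeqnotpar}, whose inhomogeneous right-hand side,
\[
F(\tau):=\bigl(|\langle u\rangle+u_{\notparallel}|^p-|\langle u\rangle|^p\bigr)-\int_{\T}\bigl(|\langle u\rangle+u_{\notparallel}|^p-|\langle u\rangle|^p\bigr)dx_1,
\]
has zero $x_1$-average and therefore belongs to the $\notparallel$ subspace for every $\tau$. This is crucial, because it allows us to invoke the enhanced dissipation estimate \eqref{20210512eq01} of Proposition \ref{prop42} on every factor $\calS_{t-\tau}$. Writing
\[
u_{\notparallel}(s+\tau^*)=\calS_{\tau^*}u_{\notparallel}(s)+\nu\int_s^{s+\tau^*}\calS_{s+\tau^*-\tau}F(\tau)\,d\tau
\]
and taking $L^2$ norms, the linear part contributes at most $10e^{-\lambda_\nu\tau^*}\|u_{\notparallel}(s)\|_{L^2}=10e^{-4}\|u_{\notparallel}(s)\|_{L^2}$. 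Since $10e^{-4}<1/e$, the gap $\delta_0:=1/e-10e^{-4}>0$ is what the nonlinear contribution must fit into.

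To estimate the nonlinear term, first use Lemma \ref{20210520lem10} to obtain
\[
\|F(\tau)\|_{L^2}\lesssim_p \|u_{\notparallel}(\tau)\|_{L^{2p}}^{p}+\bigl\|u_{\notparallel}(\tau)|\langle u\rangle(\tau)|^{p-1}\bigr\|_{L^2},
\]
and then bound the mixed term by Hölder and Minkowski exactly as in the proof of Proposition \ref{best02prop}, which gives $\bigl\|u_{\notparallel}|\langle u\rangle|^{p-1}\bigr\|_{L^2}\le \|\langle u\rangle\|_{L^{p+1}_{x_2}}^{p-1}\|u_{\notparallel}\|_{L^{p+1}}$. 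Next apply Gagliardo--Nirenberg in two dimensions to $u_{\notparallel}$ (for both $L^{2p}$ and $L^{p+1}$) and the one-dimensional version to $\langle u\rangle$ to reduce everything to the quantities $\|\nabla u_{\notparallel}\|_{L^2}$, $\|u_{\notparallel}\|_{L^2}$, $\|\partial_{x_2}\langle u\rangle\|_{L^2_{x_2}}$, $\|\langle u\rangle\|_{L^2_{x_2}}$.

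Now integrate against the enhanced-dissipation weight $e^{-\lambda_\nu(s+\tau^*-\tau)}$. On the $L^{2p}$ piece, combine bootstrap assumption $(1)$ (which supplies the decay factor $e^{-\lambda_\nu(\tau-s)/4}\|u_{\notparallel}(s)\|_{L^2}$) with Hölder in $\tau$ using the conjugate pair $(2/(p-1),2/(3-p))$, then bootstrap assumption $(2)$ (giving $\nu\int_s^{s+\tau^*}\|\nabla u_{\notparallel}\|_{L^2}^2\,d\tau\le 10\|u_{\notparallel}(s)\|_{L^2}^2$); this yields a bound of size $C(\nu/\lambda_\nu)^{(3-p)/2}\|u_{\notparallel}(0)\|_{L^2}^{p-1}\|u_{\notparallel}(s)\|_{L^2}$. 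On the mixed piece use the same Hölder triple $\bigl((p+1)/(p-1),4(p+1)/(p-1)^2,D_p\bigr)$ introduced in \eqref{20210520eq40}, pull out $\|\langle u\rangle(\tau)\|_{L^2_{x_2}}^{(p+3)(p-1)/(2(p+1))}$ by the uniform bound \eqref{20210518eq40} of Proposition \ref{20210519prop01}, and control $\int\|\partial_{x_2}\langle u\rangle\|^2$ by \eqref{20210518eq401} and $\int\|\nabla u_{\notparallel}\|^2$ by the bootstrap; this produces a bound of size $C(\nu/\lambda_\nu)^{1/D_p}\|u_{\notparallel}(s)\|_{L^2}$. Since $\lambda_\nu=c_0\nu^{2/(2+m)}$ with $m\ge 2$, the ratio $\nu/\lambda_\nu\to 0$ as $\nu\to 0$, so choosing $\nu_3>0$ small enough (depending only on $p$ and $\|u_{\notparallel}(0)\|_{L^2}$) makes the total nonlinear contribution strictly less than $\delta_0\|u_{\notparallel}(s)\|_{L^2}$, from which \eqref{20210521eq102} follows.

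The main technical obstacle is the mixed term: it is the one place where the smallness of $\langle u\rangle$ guaranteed by Proposition \ref{20210519prop01} has to be leveraged together with the two bootstrap assumptions and the enhanced-dissipation weight simultaneously, and the exponents must line up so that a strictly positive power of $\nu/\lambda_\nu$ survives. Tracking these exponents carefully, in parallel to the bookkeeping of Proposition \ref{best02prop}, is the delicate part; once that is done the conclusion is immediate from the gap $\delta_0>0$.
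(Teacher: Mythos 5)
Your overall strategy is the same as the paper's: write $u_{\notparallel}(\tau^*+s)$ via Duhamel for the \emph{modified} equation \eqref{modeqnotpar}, apply the enhanced dissipation estimate \eqref{20210512eq01} to the linear term to get $10e^{-4}<1/e$, split the nonlinear contribution into a pure-$u_{\notparallel}$ piece and a mixed piece after Lemma \ref{20210520lem10}, then close via Gagliardo--Nirenberg, the bootstrap assumptions, and Proposition \ref{20210519prop01}, with the smallness supplied by $\nu/\lambda_\nu\to 0$. (A minor cosmetic difference: the paper does not keep the enhanced-dissipation weight on the Duhamel integral; it just uses $\|\calS_t\|_{L^2\to L^2}\le 1$ there. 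Keeping the weight, as you propose, is harmless but unnecessary.)

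There is, however, a genuine exponent error in your mixed-term estimate. You assert that Hölder/Minkowski give
$\|u_{\notparallel}\,|\langle u\rangle|^{p-1}\|_{L^2}\le \|\langle u\rangle\|_{L^{p+1}_{x_2}}^{p-1}\,\|u_{\notparallel}\|_{L^{p+1}}$
``exactly as in the proof of Proposition \ref{best02prop}.'' That is not the situation here. In Proposition \ref{best02prop} the integral $\int_{\T^2}|u_{\notparallel}|^2|\langle u\rangle|^{p-1}$ arises from the $L^2$ pairing of $u_{\notparallel}$ with the nonlinearity, so $|\langle u\rangle|$ appears to the power $p-1$. Here you need $\|F\|_{L^2}$, and squaring produces $\int_{\T^2}|u_{\notparallel}|^2|\langle u\rangle|^{2(p-1)}$ --- the power of $|\langle u\rangle|$ is doubled. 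Factoring the $x_1$ integral, using Jensen in $x_1$, and then Hölder in $x_2$ gives the correct bound
$\|u_{\notparallel}\,|\langle u\rangle|^{p-1}\|_{L^2}\le \|\langle u\rangle\|_{L^{2p}_{x_2}}^{p-1}\,\|u_{\notparallel}\|_{L^{2p}}$,
not the one you wrote. As a consequence the Gagliardo--Nirenberg applications must be carried out at exponent $2p$ (not $p+1$), and the Hölder triple in $\tau$ that closes the argument is $\left(\tfrac{4p}{(p-1)^2},\tfrac{2p}{p-1},E_p\right)$ with $E_p=\tfrac{4p}{2p+2-(p-1)^2}$, not the $D_p$-triple from \eqref{20210520eq40}. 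The corrected mixed-term bound is $C(\nu/\lambda_\nu)^{1/E_p}\|u_{\notparallel}(s)\|_{L^2}$ rather than your $C(\nu/\lambda_\nu)^{1/D_p}\|u_{\notparallel}(s)\|_{L^2}$; the conclusion still follows since the exponent is positive, but the step as written does not check out.
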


\begin{proof}
Again, we start with commenting that the constant $C_p''$ that we will use in the proof of this lemma might change from line by line, but it will only depend on $p$.

Note that the major difference between the estimates \eqref{20210521eq101} and \eqref{20210521eq102} is that the coefficient in the later estimate is strictly smaller than $1$, and hence the proof of Lemma \ref{bestlem01} does not work here. The idea is to use the mild solution of $u_{\notparallel}$. For the same reason as in \eqref{modeqnotpar}, we write the mild solution \eqref{20210512eq201} of $u_{\notparallel}$ as follows: 
\begin{eqnarray*}
&& u_{\notparallel}(\tau^*+s)= \calS_{\tau^*} \left(u_{\notparallel}(s) \right)+\nu \int_s^{\tau^*+s} \calS_{\tau^*+s-\tau} \bigg( |\langle u \rangle(\tau)+u_{\notparallel}(\tau)|^p-\left| \langle u \rangle(\tau) \right|^p \\
&& \quad \quad \quad \quad \quad \quad  \quad \quad \quad \quad \quad \quad \quad \quad -\int_\T  \left[|\langle u \rangle(\tau)+u_{\notparallel}(\tau)|^p-\left|\langle u \rangle(\tau) \right|^p\right]  dx_1 \bigg) d\tau. 
\end{eqnarray*}
Taking $L^2$-norm on both sides of the above estimate, and by \eqref{20210512eq01} and  Lemma \ref{20210520lem10}, we see that
\begin{eqnarray} \label{20210521eq30}
&&\left\|u_{\notparallel}(\tau^*+s) \right\|_{L^2}\le  \left\|\calS_{\tau^*} \left(u_{\notparallel}(s) \right) \right\|_{L^2}+ \nu \int_s^{\tau^*+s} \bigg\| |\langle u \rangle(\tau)+u_{\notparallel}(\tau)|^p-\left| \langle u \rangle(\tau) \right|^p \nonumber \\
&& \quad \quad \quad \quad \quad \quad  \quad \quad \quad \quad \quad \quad \quad \quad -\int_\T  \left[|\langle u \rangle(\tau)+u_{\notparallel}(\tau)|^p-\left|\langle u \rangle(\tau) \right|^p \right] dx_1 \bigg\|_{L^2} d\tau \nonumber\\
&& \quad \le  \frac{10}{e^4} \cdot \|u_{\notparallel}(s)\|_{L^2} + \nu \int_s^{\tau^*+s} \bigg\| |\langle u \rangle(\tau)+u_{\notparallel}(\tau)|^p-\left| \langle u \rangle(\tau) \right|^p \nonumber\\
&& \quad \quad \quad \quad \quad \quad  \quad \quad \quad \quad \quad \quad \quad \quad -\int_\T  \left[|\langle u \rangle(\tau)+u_{\notparallel}(\tau)|^p-\left|\langle u \rangle(\tau) \right|^p\right]  dx_1 \bigg\|_{L^2} d\tau, \nonumber \\
&& \quad\le  \frac{10}{e^4} \cdot \|u_{\notparallel}(s)\|_{L^2}+2\nu \int_s^{\tau^*+s} \left\| |\langle u \rangle(\tau)+u_{\notparallel}(\tau)|^p-\left| \langle u \rangle(\tau) \right|^p \right\|_{L^2} d\tau \nonumber\\
&& \quad  \le \frac{10}{e^4} \cdot \|u_{\notparallel}(s)\|_{L^2}+C_p''\nu \int_s^{\tau^*+s} \left\| |u_{\notparallel}(\tau)| \left(\left| \langle u \rangle (\tau) \right|^{p-1}+\left|u_{\notparallel}(\tau) \right|^{p-1} \right) \right\|_{L^2} d\tau \nonumber\\
&& \quad \le \frac{10}{e^4} \cdot \|u_{\notparallel}(s)\|_{L^2}+ C_p'' \nu \int_s^{\tau^*+s} \left\| |u_{\notparallel}(\tau)|^p \right\|_{L^2} d\tau \nonumber\\ 
&& \quad \quad \quad \quad \quad \quad \quad \quad  \quad \quad \quad \quad \quad \quad\quad +C_p'' \nu \int_s^{\tau^*+s} \left\| |u_{\notparallel}(\tau) \left| \langle u \rangle (\tau) \right|^{p-1} \right\|_{L^2} d\tau. 
\end{eqnarray}
Next we control the last two integrals in \eqref{20210521eq30} by following a similar approach as in the estimate \eqref{20210520eq02}. Note that in \eqref{20210520eq02}, the Gagliardo–Nirenberg's inequality is applied to control the $L^{p+1}$-norm, while here for $L^{2p}$-norm. We now turn to the details. We estimate the first integral as follows:
\begin{eqnarray} \label{20210521eq41} 
&&\int_s^{\tau^*+s} \left\|u_{\notparallel} (\tau)\right\|_{L^{2p}}^{p} d\tau \le \int_s^{\tau^*+s} \left( \left\|\nabla u_{\notparallel}(\tau) \right\|_{L^2}^{\frac{p-1}{p}} \cdot \left\|u_{\notparallel}(\tau) \right\|_{L^2}^{\frac{1}{p}} \right)^p d\tau \nonumber \\
&& \quad \quad \quad \quad = \int_s^t \left\|\nabla u_{\notparallel}(\tau) \right\|_{L^2}^{p-1} \cdot \left\|u_{\notparallel}(\tau) \right\|_{L^2} d\tau \nonumber\\
&& \quad \quad \quad \quad = C_p'' \|u_{\notparallel}(s)\|_{L^2} \cdot \int_s^{\tau^*+s} e^{-\frac{\lambda_\nu(\tau-s)}{4}} \cdot  \left\|\nabla u_{\notparallel}(\tau) \right\|_{L^2}^{p-1} d\tau \nonumber\\
&&\quad \quad \quad \quad \le C_p''\|u_{\notparallel} (s)\|_{L^2}\cdot \left(\int_s^{\tau^*+s} e^{-\frac{\lambda_\nu(\tau-s)}{6-2p}} d\tau \right)^{\frac{3-p}{2}} \nonumber \\
&& \quad \quad \quad \quad \quad \quad \quad \quad \quad  \quad \quad \quad \quad \cdot \nu^{\frac{1-p}{2}} \cdot \left( \nu \int_s^{\tau^*+s} \left\| \nabla u_{\notparallel}(\tau) \right\|_{L^2}^2 d\tau \right)^{\frac{p-1}{2}} \nonumber\\
&& \quad \quad \quad \quad \le C_p'' \cdot \frac{\nu^{\frac{1-p}{2}}}{\lambda_\nu^{\frac{3-p}{2}}} \cdot  \|u_{\notparallel}(s)\|_{L^2}^{p-1} \cdot \|u_{\notparallel}(s)\|_{L^2}\nonumber \\
&& \quad \quad \quad \quad \le C_p'' \cdot \frac{\nu^{\frac{1-p}{2}}}{\lambda_\nu^{\frac{3-p}{2}}} \cdot  \|u_{\notparallel}(0)\|_{L^2}^{p-1} \cdot \|u_{\notparallel}(s)\|_{L^2}. 
\end{eqnarray} 
While for the second one, we have 
\begin{eqnarray} \label{20210521eq42}
&& \int_s^{\tau^*+s} \left\| |u_{\notparallel}(\tau) \left| \langle u \rangle (\tau) \right|^{p-1} \right\|_{L^2} d\tau \nonumber \\
&& \quad \quad =\int_s^{\tau^*+s} \left[ \int_{\T^2} |u_{\notparallel}(\tau)|^2 \left| \langle u \rangle (\tau) \right|^{2(p-1)} dx_1dx_2 \right]^{\frac{1}{2}} d\tau \nonumber \\
&& \quad \quad = \int_s^{\tau^*+s} \left[ \int_\T \left| \langle u \rangle (\tau) \right|^{2(p-1)} \left(\int_\T |u_{\notparallel}(\tau)|^2 dx_1 \right) dx_2 \right]^{\frac{1}{2}} d\tau \nonumber \\
&& \quad \quad \le \int_s^{\tau^*+s} \left[ \int_\T \left| \langle u \rangle (\tau) \right|^{2(p-1)} \left(\int_\T |u_{\notparallel}(\tau)|^{2p} dx_1 \right)^{\frac{1}{p}} dx_2 \right]^{\frac{1}{2}} d\tau \nonumber \\
&& \quad \quad \le \int_s^{\tau^*+s} \left[ \int_\T \left|\langle u \rangle(\tau) \right|^{2p} dx_2 \right]^{\frac{p-1}{2p}} \cdot \left[ \int_{\T^2} |u_{\notparallel}(\tau)|^{2p} dx_1dx_2 \right]^{\frac{1}{2p}} d\tau \nonumber \\
&& \quad \quad =\int_s^{\tau^*+s} \left\| \langle u \rangle (\tau) \right\|_{L_{x_2}^{2p}}^{p-1} \cdot \left\|u_{\notparallel}(\tau) \right\|_{L^{2p}} d\tau \nonumber \\
&& \quad \quad \le C_p'' \int_s^{\tau^*+s} \left\|\partial_{x_2} \langle u \rangle (\tau) \right\|_{L_{x_2}^2}^{\frac{(p-1)^2}{2p}} \cdot \left\|\langle u \rangle (\tau) \right\|_{L_{x_2}^2}^{\frac{p^2-1}{2p}} \nonumber \\
&& \quad \quad \quad \quad \quad \quad \quad \quad \quad \quad \quad \quad \quad \cdot \|\nabla u_{\notparallel}(\tau) \|_{L^2}^{\frac{p-1}{p}} \cdot \|u_{\notparallel}(\tau) \|_{L^2}^{\frac{1}{p}} d\tau \nonumber \\
&& \quad \quad \le C_p'' \int_s^{\tau^*+s} \left\|\partial_{x_2} \langle u \rangle (\tau) \right\|_{L_{x_2}^2}^{\frac{(p-1)^2}{2p}} \cdot  \|\nabla u_{\notparallel}(\tau) \|_{L^2}^{\frac{p-1}{p}} \cdot \|u_{\notparallel}(\tau) \|_{L^2}^{\frac{1}{p}} d\tau \nonumber \\
&& \quad \quad \le C_p'' \|u_{\notparallel}(s) \|_{L^2}^{\frac{1}{p}}  \int_s^{\tau^*+s} e^{-\frac{\lambda_\nu(\tau-s)}{4p}} \cdot \left\|\partial_{x_2} \langle u \rangle (\tau) \right\|_{L_{x_2}^2}^{\frac{(p-1)^2}{2p}} \cdot  \|\nabla u_{\notparallel}(\tau) \|_{L^2}^{\frac{p-1}{p}} d\tau.
\end{eqnarray}
Let
$$
\left( \frac{4p}{(p-1)^2}, \frac{2p}{p-1}, E_p \right)
$$
be a H\"older-triple, where $E_p=\frac{4p}{2p+2-(p-1)^2}>1$. Then 
\begin{eqnarray} \label{20210521eq43}
\eqref{20210521eq42}%
&\le& C_p'' \|u_{\notparallel}(s)\|_{L^2}^\frac{1}{p} \cdot \left(\int_s^{\tau^*+s} \left\|\partial_{x_2} \langle u \rangle (\tau) \right\|^2_{L_{x_2}^2} d\tau \right)^{\frac{(p-1)^2}{4p}} \nonumber \\
&& \quad \quad  \cdot \left(\int_s^{\tau^*+s} \left\|\nabla u_{\notparallel}(\tau) \right\|_{L^2}^2 d\tau \right)^{\frac{p-1}{2p}} \cdot \left( \int_s^{\tau^*+s} e^{-\frac{\lambda_\nu E_p(\tau-s)}{4p}} d\tau \right)^{\frac{1}{E_p}} \nonumber \\
&\le& C_p'' \cdot \nu^{-1+\frac{1}{E_p}} \cdot  \|u_{\notparallel}(s)\|_{L^2}^\frac{1}{p} \cdot \left(\nu \int_s^{\tau^*+s} \left\|\partial_{x_2} \langle u \rangle (\tau) \right\|^2_{L_{x_2}^2} d\tau \right)^{\frac{(p-1)^2}{4p}} \nonumber \\
&& \quad \quad  \cdot \left(\nu \int_s^{\tau^*+s} \left\|\nabla u_{\notparallel}(\tau) \right\|_{L^2}^2 d\tau \right)^{\frac{p-1}{2p}} \cdot \frac{1}{\lambda_\nu^{\frac{1}{E_p}}} \nonumber \\
&\le& C_p'' \cdot \frac{\nu^{-1+\frac{1}{E_p}}}{\lambda_\nu^{\frac{1}{E_p}}} \cdot \|u_{\notparallel}(s)\|_{L^2}^{\frac{1}{p}} \cdot \|u_{\notparallel}(s)\|_{L^2}^{\frac{p-1}{p}} = C_p'' \cdot \frac{\nu^{-1+\frac{1}{E_p}}}{\lambda_\nu^{\frac{1}{E_p}}} \cdot \|u_{\notparallel}(s)\|_{L^2}. 
\end{eqnarray}
Finally, combining \eqref{20210521eq30}, \eqref{20210521eq41}, \eqref{20210521eq42} and \eqref{20210521eq43}, we get 
\begin{eqnarray*}
&& \left\|u_{\notparallel}(\tau^*+s) \right\|_{L^2} \le \frac{10}{e^4} \cdot \|u_{\notparallel}(s) \|_{L^2}+C_p'' \left(\frac{\nu}{\lambda_\nu} \right)^{\frac{3-p}{2}} \cdot \|u_{\notparallel}(0)\|_{L^2}^{p-1} \cdot \|u_{\notparallel}(s)\|_{L^2} \\
&& \quad \quad \quad \quad \quad \quad \quad \quad \quad \quad \quad \quad \quad \quad \quad+ C_p'' \left(\frac{\nu}{\lambda_{\nu}} \right)^{\frac{1}{E_p}} \|u_{\notparallel}(s)\|_{L^2}. 
\end{eqnarray*} 
It is clear that the desired estimate \eqref{20210521eq102} follows from the above estimate, as long as we take $\nu \in (0, \nu_3)$, where $0<\nu_3<\widetilde{\nu}$, where $\widetilde{\nu}$ is defined in Proposition \ref{20210519prop01}, and 
$$
 C_p'' \left(\frac{\nu}{\lambda_\nu} \right)^{\frac{3-p}{2}} \cdot \|u_{\notparallel}(0)\|_{L^2}^{p-1}+C_p''\left(\frac{\nu}{\lambda_{\nu}} \right)^{\frac{1}{E_p}}<\frac{1}{100}. 
$$
The proof is complete. 
\end{proof}

We are now ready to prove the first inequality in the bootstrap estimates. 

\begin{prop} \label{best03prop}
Assume the bootstrap assumption, $u_0\in L^2_0$, \eqref{20210518eq14b} and \eqref{20210518eq15}. Then there exists a $\nu^*>0$ which only depends on $p$ and $\|u_{\notparallel}(0)\|_{L^2}$, such that for any $0<\nu<\nu^*$ and any $0 \le s \le t \le \widetilde{t}_0$, 
\begin{equation} \label{best03}
\|u_{\notparallel}(t)\|_{L^2} \le 15e^{-\frac{\lambda_\nu(t-s)}{4}} \|u_{\notparallel}(s)\|_{L^2}.
\end{equation} 
\end{prop}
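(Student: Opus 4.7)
The plan is to combine the two preceding lemmas, which together supply the raw ingredients for an exponential bound. Lemma \ref{bestlem01} controls the ratio $\|u_{\notparallel}(t)\|_{L^2}/\|u_{\notparallel}(s)\|_{L^2}$ by $\tfrac{3}{2}$ on arbitrarily long subintervals of $[0,\widetilde{t}_0]$, while Lemma \ref{best2lem02} produces a strict geometric decay by a factor $1/e$ across intervals of length exactly $\tau^*=4/\lambda_\nu$. The numerology $\lambda_\nu\tau^*/4=1$ is designed precisely so that iterating the second lemma reproduces the desired rate $e^{-\lambda_\nu(t-s)/4}$, with Lemma \ref{bestlem01} absorbing the sub-$\tau^*$ remainder at the cost of a harmless constant.

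Concretely, I would set $\nu^*:=\min\{\nu_1,\nu_2,\nu_3\}$ so that Proposition \ref{best02prop} and Lemmas \ref{bestlem01}, \ref{best2lem02} are all simultaneously valid. Given $0\le s\le t\le \widetilde{t}_0$, two cases arise. If $t-s<\tau^*$, Lemma \ref{bestlem01} yields $\|u_{\notparallel}(t)\|_{L^2}\le \tfrac{3}{2}\|u_{\notparallel}(s)\|_{L^2}$, and since $\lambda_\nu(t-s)/4<1$ we have $15 e^{-\lambda_\nu(t-s)/4}>15/e>\tfrac{3}{2}$, which settles this range. If instead $t-s\ge\tau^*$, I write $t-s=n\tau^*+r$ with integer $n\ge 1$ and $r\in[0,\tau^*)$, and apply Lemma \ref{best2lem02} successively at the anchor times $s, s+\tau^*,\dots,s+(n-1)\tau^*$ to obtain
\[
\|u_{\notparallel}(s+n\tau^*)\|_{L^2}\le e^{-n}\|u_{\notparallel}(s)\|_{L^2}.
\]
Lemma \ref{bestlem01} on the residual interval $[s+n\tau^*,t]$ (of length $r<\tau^*$) then gives $\|u_{\notparallel}(t)\|_{L^2}\le \tfrac{3}{2}e^{-n}\|u_{\notparallel}(s)\|_{L^2}$. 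Finally the identity $\lambda_\nu(t-s)/4 = n+\lambda_\nu r/4\in[n,n+1)$ implies $15 e^{-\lambda_\nu(t-s)/4}\ge 15 e^{-(n+1)}=(15/e)\,e^{-n}>\tfrac{3}{2}e^{-n}$, so \eqref{best03} holds with constant $15$ to spare.

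The only bookkeeping item is verifying that each intermediate iteration anchor $s+k\tau^*$ lies in $[0,\widetilde{t}_0-\tau^*]$ as required by Lemma \ref{best2lem02}; this is immediate from $s+n\tau^*\le t\le\widetilde{t}_0$, so there is no obstruction. All remaining hypotheses (the bootstrap assumption, \eqref{20210518eq14b}, \eqref{20210518eq15}) are standing assumptions of the proposition and propagate automatically. I do not expect any serious obstacle here: the analytic heavy lifting was front-loaded into Proposition \ref{best02prop} together with Lemmas \ref{bestlem01} and \ref{best2lem02}, and the present step is essentially a combinatorial corollary that cleanly upgrades the pointwise and integrated bootstrap estimates to the sharp exponential form needed to close the bootstrap scheme (with constants $15<20$ and $5<10$ strictly improving the ansatz).
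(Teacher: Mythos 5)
Your proof is correct and follows essentially the same route as the paper: iterate Lemma \ref{best2lem02} over consecutive $\tau^*$-blocks to pick up the factor $e^{-n}$, absorb the residual sub-$\tau^*$ interval via Lemma \ref{bestlem01}, and observe that $\tfrac{3}{2}e^{-n}\le 15e^{-\lambda_\nu(t-s)/4}$ since $\lambda_\nu(t-s)/4<n+1$. The only cosmetic difference is that you split cases on $t-s\lessgtr\tau^*$ while the paper splits on $\widetilde{t}_0\lessgtr\tau^*$; the paper also (harmlessly redundantly) includes $\widetilde{\nu}$ in the minimum defining $\nu^*$.
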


\begin{proof}
Let $0<\nu^*<\min\{\widetilde{\nu}, \nu_1, \nu_2, \nu_3\}$, so that all the restrictions in Proposition \ref{20210519prop01}, Proposition \ref{best02prop}, Lemma \ref{bestlem01} and Lemma \ref{best2lem02} hold. If $\widetilde{t}_0<\tau^*$, then we have $t-s \le \tau^*=\frac{4}{\lambda_\nu}$, and hence by Lemma \ref{bestlem01}, 
$$
\|u_{\notparallel}(t)\|_{L^2} \le \frac{3}{2} \|u_{\notparallel}(s)\|_{L^2} \le \frac{15}{e} \|u_{\notparallel}(s)\|_{L^2} \le 15e^{-\frac{\lambda_\nu(t-s)}{4}} \|u_{\notparallel}(s)\|_{L^2}, 
$$
which implies the desired estimate \eqref{best03}. If $\widetilde{t}_0 \ge \tau^*$, then for any $0 \le s \le t \le \widetilde{t}_0$, we can find some $n \in \Z_+$, such that $t \in [n\tau^*+s, (n+1)\tau^*+s)$. Then by Lemma \ref{bestlem01} and Lemma \ref{best2lem02}, we have
\begin{eqnarray*}
\|u_{\notparallel}(t)\|_{L^2}%
& \le& \frac{3}{2} \cdot \|u_{\notparallel}(n\tau^*+s) \|_{L^2} \le \frac{3}{2e^n} \cdot \|u_{\notparallel}(s)\|_{L^2} \\
&\le& \frac{3}{2} \cdot e^{1-\frac{(t-s)}{\tau^*}} \cdot \|u_{\notparallel}(s)\|_{L^2} \le 15e^{-\frac{\lambda_\nu(t-s)}{4}} \|u_{\notparallel}(s)\|_{L^2}.
\end{eqnarray*}
This concludes the proof of the proposition. 
\end{proof}

\medskip

\subsection{Proof of Theorem \ref{mainthm03}} Note that it suffices to show the maximal time $\widetilde{t}_0$ has to be infinity. The regularity of the solution $u$ follows clearly from the bootstrap assumption and Proposition \ref{20210519prop01}. Assume that $\widetilde{t}_0<\infty$. Then on one hand, the bootstrap estimates hold on the interval $[0, \widetilde{t}_0]$. On the other hand, by the first inequality in the bootstrap estimate and Proposition \ref{20210519prop01}, we have for any $0 \le t \le \widetilde{t}_0$
$$
\left\|u \left(t, \cdot \right) \right\|_{L^2}<\infty, 
$$
which, by Corollary \ref{cor01}, implies that $\widetilde{t}_0$ is \emph{not} the maximal time of existence of $u$. Therefore, by continuity and the bootstrap estimate, we can find a sufficiently small $\varepsilon>0$ (which may depend on the continuity at $\widetilde{t}_0$), such that for any $0 \le s \le t \le \widetilde{t}_0+\varepsilon$,
\begin{enumerate}
    \item [(1).] $\|u_{\notparallel}(t)\|_{L^2} \le 18e^{-\frac{\lambda_\nu(t-s)}{4}} \|u_{\notparallel}(s)\|_{L^2}$; 
    
    \medskip
    
    \item [(2).] $\nu \int_s^t \|\nabla u_{\notparallel}(\tau)\|_{L^2}^2 d\tau \le 8 \|u_{\notparallel}(s)\|_{L^2}^2$. 
\end{enumerate}
This is clearly a contradiction and hence $\widetilde{t}_0=\infty$. 

\bigskip
\appendix

\section{Energetic criterion for blow-up in the non-advective case} \label{app01}

We have already shown the global existence of the solution (in the sense of $L^2$) to the problem \eqref{maineq} under the assumptions when the flow $v$ has a small dissipation time or $v$ is a shear flow. However, the global existence is not always true for an arbitrary flow and the goal of this appendix is to include such an  example. Note that such examples have been well studied by El Soufi, Jazar and Monneau \cite{SJM07} (see, also \cite{JK08}), and here we would like to follow their presentation. 

Let us consider the easiest case when the flow $v \equiv 0$, that is:
\begin{equation} \label{e:nonadvmaineq}
\begin{cases}
u_t-\Delta u=|u|^p-\int_{\Omega} |u|^p \quad & \textrm{on} \quad \T^N, \\
\\
u \ \textrm{periodic} \quad & \textrm{on} \quad  \partial \Omega,
\end{cases}
\end{equation}
where $1<p \le 2$ and $u(x, 0)=u_0 \in C(\T^N)$ with mean zero. We have the following result. 

\begin{thm} {\cite[Theorem 1.1]{SJM07}} \label{L2blowup} 
Let
$$
E(u_0):=\int_{\T^N} \left(\frac{1}{2}\left|\nabla u_0\right|^2-\frac{1}{p+1}\left|u_0\right|^{p+1} \right) dx<0, 
$$
then the solution $u$ to the problem \eqref{e:nonadvmaineq} does not exist in $L^2(\T^N)$ for all $t>0$. Moreover, there exists $T>0$, such that if $u \in L_{loc}^\infty ([0, T]; L^2(\Omega))$, then 
$$
\lim_{t \to T^{-}} \|u(t)\|_{L^2(\T^N)}=+\infty. 
$$
\end{thm}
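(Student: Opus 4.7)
The plan is to adapt the classical concavity (Levine) method to the nonlocal equation, leveraging mean-zero preservation. Integrating \eqref{e:nonadvmaineq} over $\T^N$ and using periodicity gives $\tfrac{d}{dt}\int u\,dx = 0$, so the mean-zero hypothesis on $u_0$ persists for all $t$ in the existence interval. This will eliminate the nonlocal term in the two pairings that drive the argument.

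First, I will identify a dissipated energy. Since $\tfrac{d}{du}\!\bigl[u|u|^p/(p+1)\bigr] = |u|^p$, multiplying \eqref{e:nonadvmaineq} by $u_t$, integrating, and using mean-zero to kill the non-local term yields
$$
\|u_t\|_{L^2}^2 + \frac{d}{dt}\hat E(u) = 0, \qquad \hat E(u) := \tfrac{1}{2}\|\nabla u\|_{L^2}^2 - \tfrac{1}{p+1}\int_{\T^N} u|u|^p\,dx,
$$
so $\hat E(u(t))$ is nonincreasing. A direct computation gives $\hat E(u) - E(u) = \tfrac{2}{p+1}\int_{\{u<0\}}|u|^{p+1}\,dx \ge 0$, with equality iff $u\ge 0$ a.e.

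Second, set $M(t) := \|u(t)\|_{L^2}^2$. Pairing \eqref{e:nonadvmaineq} with $u$ (again using mean-zero to annihilate the non-local term) and then substituting the definition of $\hat E$ yields
$$
\tfrac{1}{2} M'(t) \;=\; -\|\nabla u\|_{L^2}^2 + \int_{\T^N} u|u|^p\,dx \;=\; \tfrac{p-1}{2}\|\nabla u\|_{L^2}^2 - (p+1)\hat E(u(t)).
$$
Once negativity of $\hat E$ along the flow is secured (see obstacle below), monotonicity gives $M'(t) \ge (p-1)\|\nabla u\|_{L^2}^2 + 2(p+1)|\hat E(u_0)|$. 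Combining this with the Poincaré–Hölder chain on the finite-volume torus ($\|u\|_{L^2}\lesssim \|u\|_{L^{p+1}}$ and $\|u\|_{L^2}\lesssim \|\nabla u\|_{L^2}$ on the mean-zero subspace) produces an ODI of the form $M'(t) \ge c_1\, M(t)^{(p+1)/2} + c_2$ with $c_1>0$. Since $\tfrac{p+1}{2}>1$, integration forces $M(t)\to\infty$ at some finite time $T$. Corollary~\ref{cor01} applied with $v\equiv 0$ then identifies $T$ (or some earlier time) with the maximal existence time $T^*$ of the mild solution and yields the stated $\lim_{t\to T^-}\|u(t)\|_{L^2}=+\infty$.

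The main obstacle is bridging the hypothesis $E(u_0)<0$ with the dissipated quantity $\hat E(u_0)$: the inequality $\hat E\ge E$ goes the \emph{wrong} way, so $E(u_0)<0$ does not immediately imply $\hat E(u_0)<0$, and the ODI above demands this negativity to close. To overcome this I plan to follow the strategy of~\cite{SJM07}, either (i) applying Levine's method to a modified functional such as $L(t)=\int_0^t M(\tau)\,d\tau + (T_0-t)\|u_0\|_{L^2}^2 + \beta$, for which the concavity inequality $LL''\ge (1+\alpha)(L')^2$ needs only the \emph{differential} form of the energy identity together with the \emph{initial} sign $E(u_0)<0$, not the sign of $\hat E(u_0)$ at later times; or (ii) tracking the correction $\int_{\{u<0\}}|u|^{p+1}\,dx$ as an auxiliary quantity and showing via Sobolev embedding that it is itself controlled by $M^{(p+1)/2}$, so that a coupled ODI still closes. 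Apart from this sign-changing subtlety (which is exactly the content that justifies citing \cite{SJM07}), the remaining pieces—the energy identity, the $L^2$-pairing computation, and the reduction to a scalar super-linear ODI—are routine.
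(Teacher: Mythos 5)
The paper does not prove Theorem~\ref{L2blowup}; it cites \cite[Theorem 1.1]{SJM07} without supplying an argument, so your sketch is evaluated on its own merits. Your preparatory steps are correct and well-chosen: mean-conservation does annihilate the nonlocal term in both pairings, the dissipated quantity is indeed $\hat E(u)=\tfrac12\|\nabla u\|_{L^2}^2-\tfrac{1}{p+1}\int_{\T^N}u|u|^p\,dx$ (since the primitive of $|u|^p$ is $u|u|^p/(p+1)$), and your observation that $\hat E\ge E$, with equality only for $u\ge 0$, is precisely the subtlety that the nonlinearity $|u|^p$ forces one to confront.

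Two steps of the closing argument fail. First, the ODI $M'(t)\ge c_1M(t)^{(p+1)/2}+c_2$ does not follow from the Poincar\'e chain you describe. From $M'=(p-1)\|\nabla u\|_{L^2}^2-2(p+1)\hat E(u(t))$, monotonicity of $\hat E$ and Poincar\'e on the mean-zero subspace yield only $M'\gtrsim M+\mathrm{const}$, i.e.\ linear growth: the sign condition $\hat E<0$ gives $\int u|u|^p>\tfrac{p+1}{2}\|\nabla u\|_{L^2}^2$ but supplies no lower bound of the shape $\int u|u|^p\gtrsim\|u\|_{L^{p+1}}^{p+1}$, since $\int u|u|^p$ can be far smaller than $\|u\|_{L^{p+1}}^{p+1}$. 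The superlinear effect must come from the genuine concavity mechanism: with $L(t)=\int_0^tM+(T_0-t)M(0)+\beta(t+\sigma)^2$ and the identity $M'(t)=(p-1)\|\nabla u\|_{L^2}^2-2(p+1)\hat E(u_0)+2(p+1)\int_0^t\|u_\tau\|_{L^2}^2\,d\tau$, a two-term Cauchy--Schwarz gives $LL''\ge\tfrac{p+1}{2}(L')^2$, which closes for $p>1$ but only under $\hat E(u_0)<0$. Second, and more seriously, route (i) claims the Levine method needs ``only the initial sign $E(u_0)<0$'', yet every appearance of the energy in the concavity computation is $\hat E(u_0)$, not $E(u_0)$; the sign gap you yourself identify is therefore not bridged by (i), and route (ii) (an \emph{upper} bound on $\int_{\{u<0\}}|u|^{p+1}$) cannot help lower-bound $M'$. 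In fact \cite{SJM07} close the argument by a $\Gamma$-convergence method rather than a pure concavity computation, so the pairing of the hypothesis $E(u_0)<0$ with the blow-up conclusion rests on a mechanism your sketch has not reproduced.
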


\begin{rem}
The problem \eqref{e:nonadvmaineq} was originally stated for any uniformly $C^2$ regular bounded domain in $\R^N$ with Neumann boundary conditions. Nevertheless, it is not hard to check that their proof still works under the periodic boundary conditions. 
\end{rem}

\section{A generalization of Theorem \ref{mainthm04}} \label{app02}
In the second appendix, we show that the assumptions in Theorem~\ref{mainthm04} can be relaxed. 
\begin{thm} \label{20210814thm01}
For $\alpha\geq 2$, $1<p<1+\frac{\alpha}{N+2}$, let $v \in L^\infty \left(\R_+; L^{\frac{\alpha}{p-1}} \right)$ and $u=\calN(u)$ be the mild solution on $[0, T]$ given in Theorem \ref{mainthm01} with the initial data $u_0 \in L^2(\T^N)$. Then, $u=\calN(u)$ is a weak solution of \eqref{maineq}, and satisfies the energy identity for any $0 \le t<T$:
\begin{eqnarray} \label{energyid_apen}
&& \|u_0\|_{L^2}^2+2\int_0^t\int_{\T^N} |u|^pu dxdt-2\int_0^t \left(\int_{\T^N} |u|^pdx \right)\left(\int_{\T^N} udx \right)dt \nonumber \\
&& \quad \quad \quad \quad \quad \quad  \quad \quad \quad =\|u(t)\|_{L^2}^2+2\int_0^t \int_{\T^N} \left|\nabla u \right|^2dxdt. 
\end{eqnarray}

\end{thm}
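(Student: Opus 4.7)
The proof will follow the six-step scheme of Theorem \ref{mainthm04} almost verbatim, so I shall mainly describe where the two relaxed hypotheses require new arguments. The structure is: (I) rewrite $u=\calN(u)$ starting from an arbitrary $\epsilon>0$ rather than $0$; (II) deduce $u\in L^2([\epsilon,T];H^1)$ from the $X_T$ bound; (III) pair the representation with a smooth test function $\varphi$ and differentiate in $t$; (IV) show $\partial_t u\in L^2([\epsilon,T];H^{-1})$ to upgrade the pairing identity to a weak formulation valid for $\varphi\in L^2([\epsilon,T];H^1)$ with $\partial_t\varphi\in L^2([\epsilon,T];H^{-1})$; (V) approximate $u$ in this energy class by $\{\varphi_n\}$ and pass to the limit to obtain the energy identity on $[\epsilon,T]$; (VI) let $\epsilon\to 0^+$ via Lebesgue dominated convergence. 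Exactly two places in this chain relied on the stronger hypotheses of Theorem \ref{mainthm04}: the bound on the advection pairing used $v\in L^\infty_tL^\infty_x$, and the vanishing of $(\int u)(\int|u|^p)$ in Step V used $\int_{\T^N}u_0\,dx=0$.

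For the advection pairing $\int\!\int\varphi\,v\cdot\nabla u\,dxd\tau$, I would replace the trivial H\"older estimate by a three-exponent H\"older bound
$$\left|\int_{\T^N}\varphi\,v\cdot\nabla u\,dx\right|\leq \|v\|_{L^{\alpha/(p-1)}}\,\|\nabla u\|_{L^2}\,\|\varphi\|_{L^q},\qquad \frac{1}{q}=\frac{1}{2}-\frac{p-1}{\alpha},$$
and then invoke Gagliardo--Nirenberg to interpolate $\|\varphi\|_{L^q}\lesssim\|\varphi\|_{L^2}^{1-\sigma}\|\varphi\|_{H^1}^{\sigma}$ with $\sigma=N(p-1)/\alpha$. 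A further H\"older in time against the $L^2$ bound on $\nabla u$ and the $L^2_tH^1\cap L^\infty_tL^2$ bound on $\varphi$ yields the desired control; the hypothesis $p<1+\alpha/(N+2)$ enters to keep all space--time exponents compatible with the parabolic energy class. The same estimate ensures continuity of the advection term under the approximation $\varphi_n\to u$ used in Step V, so the approximation argument goes through with no structural change.

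As for the non-local term, when one pairs $-\int_0^t\int_{\T^N}|u|^p\,dxd\tau$ (constant in space at each $\tau$) with $u$, one obtains $-\int_0^t \bigl(\int_{\T^N}|u|^p\,dx\bigr)\bigl(\int_{\T^N}u\,dx\bigr)\,d\tau$, which previously dropped out via $\int u\,dx=0$ and now appears exactly as the extra term on the left of \eqref{energyid_apen}. Its integrability in $\tau$ is immediate from $\|u\|_{L^1}\leq\|u\|_{L^2}$ and $\|u\|_{L^p}^p\leq C\|u\|_{L^2}^p$ combined with $u\in L^\infty_tL^2_x$.

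The main obstacle lies in Step IV, namely producing the bound $\partial_t u\in L^2([\epsilon,T];H^{-1})$ when $v$ is only in $L^{\alpha/(p-1)}$ rather than $L^\infty$: one must verify that the three-exponent H\"older plus Gagliardo--Nirenberg estimate above, together with the estimate on $\int\varphi|u|^p$ that already appeared in \eqref{20210428eq10}, gives a bound on each term of the weak formulation that is linear in $\|\varphi\|_{L^2H^1}$ (allowing for the $L^\infty_tL^2$ factor to be absorbed against $\|\varphi\|_{L^\infty L^2}$). Once that bound is in hand uniformly in $\epsilon$, Steps V and VI proceed exactly as in Theorem \ref{mainthm04}, using \eqref{20210429eq02} (which is insensitive to the mean-zero hypothesis) as the dominating function in letting $\epsilon\to 0^+$, and producing both the weak formulation \eqref{weaksol} modified by the extra non-local term and the energy identity \eqref{energyid_apen}.
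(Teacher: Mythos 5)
Your proposal is correct and follows essentially the same six-step scheme as the paper's appendix proof: you correctly identify the two places ($v\in L^{\alpha/(p-1)}$ vs.\ $L^\infty$, and $\int u_0$ possibly nonzero) where new arguments are needed and supply the right tools (a three-exponent H\"older inequality followed by Gagliardo--Nirenberg interpolation for the advection pairing, retaining the non-local term $\int_0^t(\int|u|^p)(\int u)\,d\tau$ in the energy identity), differing from the paper only in that you apply Gagliardo--Nirenberg spatially on $\T^N$ and then H\"older in time whereas the paper in Step~IV applies space--time Gagliardo--Nirenberg on $[\epsilon,t]\times\T^N$ and reserves the spatial version for Step~V. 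One small inaccuracy: \eqref{20210429eq02} is \emph{not} literally insensitive to the mean-zero hypothesis, since without $\int u=0$ the identity \eqref{20210429eq01} acquires the additional term $\int_\epsilon^t\bigl(\int_{\T^N}|u|^p\,dx\bigr)\bigl(\int_{\T^N}u\,dx\bigr)\,d\tau$; however, this term is bounded uniformly in $\epsilon$ by $T\|u\|_{X_T}^{p+1}$, so the dominated convergence argument in Step~VI still goes through unchanged.
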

\begin{proof}
The proof remains the same as Theorem~~\ref{mainthm04} for the first three steps. While in \textit{Step IV}, we need to re-estimate the term $\int_\epsilon^t \int_{\T^N} \varphi(\tau) v(\tau) \cdot \nabla u(\tau) dxd\tau$ as follows. By using H\"older inequality with the triple $\left(\frac{2\alpha}{\alpha-2p+2},\frac{\alpha}{p-1},2\right)$, we have 
\begin{eqnarray*}
    \int_\epsilon^t \int_{\T^N} \varphi(\tau) v(\tau) \cdot \nabla u(\tau) dxd\tau %
    &\le &  \norm{\varphi}_{L^{\frac{2\alpha}{\alpha-2p+2}}([\epsilon,t]\times\T^{N})}\norm{v}_{L^{\frac{\alpha}{p-1}}([\epsilon,t]\times\T^{N})} \\
    && \cdot \norm{\nabla u}_{L^2([\epsilon,t]\times\T^{N})}
\end{eqnarray*}
While for the term $\norm{\varphi}_{L^{\frac{2\alpha}{\alpha-2p+2}}([\epsilon,t]\times\T^{N})}$, we use Gagliardo-Nirenberg inequality in dimension $N+1$ to get
\begin{equation*}
    \norm{\varphi}_{L^{\frac{2\alpha}{\alpha-2p+2}}([\epsilon,t]\times\T^{N})}\le C\left(\norm{D\varphi}_{L^2([\epsilon,t]\times\T^{N})}^{\frac{(N+1)(p-1)}{\alpha}}\norm{\varphi}_{L^2([\epsilon,t]\times\T^{N})}^{1-\frac{(N+1)(p-1)}{\alpha}}+\norm{\varphi}_{L^2([\epsilon,t]\times\T^{N})}\right).
\end{equation*}
Note that $0<\frac{(N+1)(p-1)}{\alpha}<1$, which is guaranteed by $1<p<1+\frac{\alpha}{N+2}$.

Next, we include some necessary modification in \textit{Step V}. By the Gagliardo-Nirenberg inequality on $\T^N$
\begin{equation*}
    \norm{\varphi}_{L^{\frac{2\alpha}{\alpha-2p+2}}(\T^{N})}\le C\left(\norm{D\varphi}_{L^2(\T^{N})}^{\frac{N(p-1)}{\alpha}}\norm{\varphi}_{L^2(\T^{N})}^{1-\frac{N(p-1)}{\alpha}}+\norm{\varphi}_{L^2(\T^{N})}\right),
\end{equation*}
 we can re-estimate the term $\int_{\epsilon}^t\int_{\T^N}\abs{\varphi_n-u}^{\frac{2\alpha}{\alpha-2p+2}}dxd\tau$ in the following way:
\begin{eqnarray} 
&& \int_{\epsilon}^t\int_{\T^N}\abs{\varphi_n-u}^{\frac{2\alpha}{\alpha-2p+2}}dxd\tau \nonumber=\int_{\epsilon}^t\norm{\varphi_n(\tau)-u(\tau)}_{L^{\frac{2\alpha}{\alpha-2p+2}}(\T^N)}^\frac{2\alpha}{\alpha-2p+2}d\tau \nonumber\\
&&\le\int_{\epsilon}^{t}\norm{\nabla\varphi(\tau)-\nabla u(\tau)}_{L^2(\T^N)}^{\frac{2(p-1)N}{\alpha-2p+2}}\norm{\varphi_n(\tau)-u(\tau)}_{L^2(\T^N)}^{\frac{2\alpha}{\alpha-2p+2}(1-\frac{(p-1)N}{\alpha})}d\tau \nonumber\\
&&\qquad+\int_{\epsilon}^t\norm{\varphi_n(\tau)-u(\tau)}_{L^2(\T^N)}^{\frac{2\alpha}{\alpha-2p+2}}d\tau\nonumber\\
&&\le\norm{\varphi_n-u}_{C([\epsilon,T];L^2)}^{\frac{2\alpha}{\alpha-2p+2}(1-\frac{(p-1)N}{\alpha})}\int_{\epsilon}^{t}\norm{\nabla\varphi_n(\tau)-\nabla u(\tau)}_{L^2(\T^N)}^{\frac{2(p-1)N}{\alpha-2p+2}}d\tau\nonumber\\
&&\qquad +(t-\epsilon)\norm{\varphi_n-u}_{C([\epsilon,T];L^2)}^{\frac{2\alpha}{\alpha-2p+2}}\nonumber\\
&&\le(t-\epsilon)^{1-\frac{(p-1)N}{\alpha-2p+2}}\norm{\varphi_n-u}_{C([\epsilon,T];L^2)}^{\frac{2\alpha}{\alpha-2p+2}(1-\frac{(p-1)N}{\alpha})}\norm{\nabla\varphi_n-\nabla u}_{L^2([\epsilon,T];L^2)}^{\frac{2(p-1)N}{\alpha-2p+2}}\nonumber\\
&&\qquad+(t-\epsilon)\norm{\varphi_n-u}_{C([\epsilon,T];L^2)}^{\frac{2\alpha}{\alpha-2p+2}}\nonumber
\end{eqnarray}
Note that the restriction $0<\frac{(p-1)N}{\alpha-2p+2}<1$ is guaranteed by $1<p<1+\frac{\alpha}{N+2}$.

Finally, in \textit{Step VI} we have
\begin{eqnarray*}
&&\int_\epsilon^t \int_{\T^N} |u(\tau)|^{p+1} dxdt\tau=\int_\epsilon^t \left\|u(\tau)\right\|_{p+1}^{p+1} d\tau \\
&&  \le   \int_\epsilon^t \left\|\nabla u(\tau) \right\|_{L^2}^{\frac{N(p-1)}{2}} \left\|u\right\|_{L^2}^{p+1-\frac{N(p-1)}{2}} d\tau +\int_{\epsilon}^{t}\|u\|_{L^2}^{p+1}d\tau \\
&&  \le  \|u\|_{X_T}^{p+1-\frac{N(p-1)}{2}}  \cdot \int_\epsilon^t \tau^{-\frac{N(p-1)}{4}} \cdot  \left(\tau^{\frac{1}{2}} \left\|\nabla u(\tau) \right\|_{L^2}\right)^{\frac{N(p-1)}{2}} d\tau + (t-\epsilon)\norm{u}_{X_T}^{p+1}\\
&& \le C_{T, p, N} \|u\|_{X_T}^{p+1},
\end{eqnarray*}
The other parts of the proof remain the same as Theorem~~\ref{mainthm04}.
\end{proof}

\end{document}